\title[The rank of the fundamental group of hyperbolic 3--manifolds]
{Geometry, Heegaard splittings and rank of the\\
fundamental group of hyperbolic 3--manifolds}
\author{Juan Souto}
\address{Department of Mathematics \\
University of Chicago\\\newline
5734 S University Avenue\\
Chicago\\Illinois 60637\\USA}
\email{juan@math.uchicago.edu}
\urladdr{www.math.uchicago.edu/~juan}
\let\xysavmatrix\xymatrix
\def\xymatrix{\disablesubscriptcorrection\xysavmatrix}
\def\cnewtheorem#1[#2]#3{\newtheorem{#1}{#3}[section]
\expandafter\let\csname c@#1\endcsname\c@sat}
\newtheorem{sat}{Theorem}[section]			
\newtheorem*{defi*}{Definition}				
\newtheorem*{bei*}{Example}
\newtheorem*{sat*}{Theorem}					
\newtheorem*{kor*}{Corollary}
\newtheorem*{rmk*}{Remark}
\let\ssection=\section
\renewcommand{\section}{\setcounter{equation}{0}\ssection}
\newtheorem*{HT}{Hyperbolization Theorem}
\newtheorem*{MRT}{Mostow's Rigidity Theorem}
\newtheorem*{CTh}{Covering Theorem}
\newtheorem{conj}{Conjecture}
\theoremstyle{remark}
\newtheorem*{bem}{Remark}
\newtheorem{prob}{Problem}
\newcommand{\BC}{\mathbb C}			\newcommand{\BH}{\mathbb H}
\newcommand{\BR}{\mathbb R}			\newcommand{\BD}{\mathbb D}
\newcommand{\BN}{\mathbb N}			
\newcommand{\BS}{\mathbb S}			\newcommand{\BZ}{\mathbb Z}
\newcommand{\BF}{\mathbb F}
\newcommand{\CC}{\mathcal C}		\newcommand{\calD}{\mathcal D}
\newcommand{\CE}{\mathcal E}		
		\newcommand{\CH}{\mathcal H}
		\newcommand{\CL}{\mathcal L}
\newcommand{\CM}{\mathcal M}		\newcommand{\CN}{\mathcal N}
		\newcommand{\CP}{\mathcal P}
\newcommand{\CS}{\mathcal S}
\newcommand{\FI}{\mathfrak Is}
\newcommand{\bnd}{\partial}
\DeclareMathOperator{\PSL}{PSL}		
\DeclareMathOperator{\vol}{vol}		
\DeclareMathOperator{\Map}{Map}
\DeclareMathOperator{\inj}{inj}
\DeclareMathOperator{\diam}{diam}
\DeclareMathOperator{\rank}{rank}
\DeclareMathOperator{\Ric}{Ric}
\DeclareMathOperator{\rel}{rel}
\DeclareMathOperator{\arccosh}{arccosh}
\begin{document}

\begin{asciiabstract}
In this survey we discuss how geometric methods can be used to study
topological properties of 3-manifolds such as their Heegaard genus or
the rank of their fundamental group. On the other hand, we also
discuss briefly some results relating combinatorial descriptions and
geometric properties of hyperbolic 3-manifolds.
\end{asciiabstract}

\begin{abstract}
In this survey we discuss how geometric methods can be used to study
topological properties of 3--manifolds such as their Heegaard genus or
the rank of their fundamental group. On the other hand, we also
discuss briefly some results relating combinatorial descriptions and
geometric properties of hyperbolic 3--manifolds.
\end{abstract}

\maketitle

A closed, and say orientable, Riemannian 3--manifold $(M,\rho)$ is {\em
hyperbolic\/} if the metric $\rho$ has constant sectional curvature
$\kappa_\rho=-1$. Equivalently, there is a discrete and torsion free group
$\Gamma$ of isometries of hyperbolic 3--space $\BH^3$ such that the
manifolds $(M,\rho)$ and $\BH^3/\Gamma$ are isometric. It is well-known
that the fundamental group $\pi_1(M)$ of every closed 3--manifold which
admits a hyperbolic metric is a non-elementary Gromov hyperbolic group and
hence that it is is infinite and does not contain free abelian subgroups of
rank 2. A 3--manifold $M$ whose fundamental group does not have subgroups
isomorphic to $\BZ^2$ is said to be {\em atoroidal\/}. Another well-known
property of those 3--manifolds which admit a hyperbolic metric is that they
are {\em irreducible\/}, ie every embedded sphere bounds a ball.
Surprisingly, these conditions suffice to ensure that a closed 3--manifold
$M$ admits a hyperbolic metric.

\begin{HT}[Perelman]\hypertarget{HypTh}{A} 
closed orientable 3--manifold $M$ admits a hyperbolic metric if and only
if it is irreducible, atoroidal and has infinite fundamental group.
\end{HT}

Thurston proved the 
\hyperlink{HypTh}{Hyperbolization Theorem}
in many cases, for instance if
$M$ has positive first Betti-number (see Otal \cite{Otal96,Otal98}). The
\hyperlink{HypTh}{Hyperbolization Theorem}
is a particular case of Thurston's Geometrization
conjecture recently proved by Perelman \cite{Per1,Per2,Per3} (see also
Cao--Zhu \cite{Cao-Zhu}).

From our point of view, the \hyperlink{HypTh}{Hyperbolization Theorem} is only one half of the
coin, the other half being Mostow's Rigidity Theorem.

\begin{MRT}
\hypertarget{MRTh}{Any} 
two closed hyperbolic 3--manifolds which are homotopy equivalent are
isometric.
\end{MRT}

The goal of this note is to describe how the existence and uniqueness of
hyperbolic metrics can be used to obtain results about quantities which
have been classically studied in 3--dimensional topology. More precisely, we
are interested in the {\em Heegaard genus\/} $g(M)$ of a 3--manifold $M$ and
in the {\em rank\/} of its fundamental group. Recall that a {\em Heegaard
splitting\/} of a closed 3--manifold is a decomposition of the manifold into
two handlebodies with disjoint interior. The surface separating both
handlebodies is said to be the {\em Heegaard surface\/} and its genus is the
genus of the Heegaard splitting. Moise \cite{Moise} proved that every
topological 3--manifold admits a Heegaard splitting. The Heegaard genus
$g(M)$ of $M$ is the minimal genus of a Heegaard splitting of $M$. The {\em
rank\/} of the fundamental group of $M$ is the minimal number of elements
needed to generate it.

Unfortunately, the \hyperlink{HypTh}{Hyperbolization Theorem} only guarantees that a
hyperbolic metric exists, but it does not provide any further information
about this metric. This is why most results we discuss below are about
concrete families of 3--manifolds for which there is enough geometric
information available. But this is also why we discuss which geometric
information, such as the volume of the hyperbolic metric, can be read from
combinatorial information about for example Heegaard splittings.

The paper is organized as follows.
In \fullref{sec:topo} and \fullref{sec:hyp} we recall some well-known facts
about 3--manifolds, Heegaard splittings and hyperbolic geometry. In
particular we focus on the consequences of tameness and of Thurston's
covering theorem.

In \fullref{sec:minimal} we describe different constructions of minimal
surfaces in 3--manifolds, in particular the relation between Heegaard
splittings and minimal surfaces. The so obtained minimal surfaces are used
for example in \fullref{sec:genus} to give a  proof of the fact that the
mapping torus of a sufficiently high power of a pseudo-Anosov mapping class
of a closed surface of genus $g$ has Heegaard genus $2g+1$.

In \fullref{sec:carrier} we introduce carrier graphs and discuss some of
their most basic properties. Carrier graphs are the way to translate
questions about generating sets of the fundamental group of hyperbolic
3--manifolds into a geometric framework. They are used in
\fullref{sec:rank1} to prove that the fundamental group of the mapping
torus of a sufficiently high power of a pseudo-Anosov mapping class of a
closed surface of genus $g$ has rank $2g+1$.

In \fullref{sec:hossein} we determine the rank of the fundamental group and
the Heegaard genus of those 3--manifolds obtained by gluing two handlebodies
by a sufficiently large power of a {\em generic\/} pseudo-Anosov mapping
class. When reading this last sentence, it may have crossed through the
mind of the reader that this must somehow be the same situation as for the
mapping torus. And in fact, it almost is. However, there is a crucial
difference. It follows from the full strength of the geometrization
conjecture, not just the \hyperlink{HypTh}{Hyperbolization Theorem}, that the manifolds in
question are hyperbolic; however, a priori nothing is known about these
hyperbolic metrics. Instead of using the existence of the hyperbolic
metric, in Namazi--Souto
\cite{Namazi-Souto} we follow a different approach, described
below. We construct out of known hyperbolic manifolds a negatively curved
metric on the manifolds in question. In particular, we have full control of
the metric and our previous strategies can be applied. In principle, our
metric and the actual hyperbolic metric are unrelated.

So far, we have considered families of 3--manifolds for which we had a
certain degree of geometric control and we have theorems asserting that for
most members of these families something happens. In \fullref{sec:volume}
we shift our focus to a different situation: we describe a result due to
Brock and the author relating the volume of a hyperbolic 3--manifold with a
certain combinatorial distance of one of its Heegaard splittings.

Finally, in \fullref{sec:rank2} we describe the geometry of those thick
hyperbolic 3--manifolds whose fundamental group has rank 2 or 3; in this
setting we cannot even describe a conjectural model but the results hint
towards the existence of such a construction.

We conclude with a collection of questions and open problems in
\fullref{sec:questions}.

This note is intended to be a survey and hence most proofs are only
sketched, and this only in the simplest cases. However, we hope that these
sketches make the underlying principles apparent. It has to be said that
this survey is certainly everything but all-inclusive, and that the same
holds for the bibliography. We refer mostly to  papers read by the author,
and not even to all of them. Apart of the fact that many important
references are missing, the ones we give are not well distributed. For
example Yair Minsky and Dick Canary do not get the credit that they deserve
since their work is in the core of almost every result presented here. It
also has to be said, that this survey is probably superfluous for those
readers who have certain familiarity with (1) the work of Canary and
Minsky, (2) the papers 
\cite{Colding-deLellis,Lackenby-genus,Namazi,Rubinstein} 
by Tobias Colding and Camillo de Lellis, Marc Lackenby, Hossein
Namazi and Hyam Rubinstein, and (3) have had a couple of conversations,
about math, with Ian Agol, Michel Boileau and Jean-Pierre Otal. Having
collaborators such as Jeff Brock also helps.

%
%
%
%

\section{Some 3--dimensional topology}\label{sec:topo}
From now on we will only consider orientable 3--manifolds $M$ which are {\em
irreducible\/}, meaning that every embedded sphere bounds a ball. We will
also assume that our manifolds do not contain surfaces homeomorphic to the
real projective plane $\BR P^2$. This is not much of a restriction because
$\BR P^3$ is the only orientable, irreducible 3--manifold which contains a
copy of $\BR P^2$.

A surface $S$ in $M$ with non-positive Euler characteristic $\chi(S)\le 0$
is said to be {\em $\pi_1$--injective\/} if the induced homomorphism
$\pi_1(S)\to\pi_1(M)$ is injective. A surface is {\em incompressible\/} if it
is embedded and $\pi_1$--injective. An embedded surface which fails to be
incompressible is said to be {\em compressible\/}. The surface $S$ is said to
be {\em geometrically compressible\/} if it contains an essential
simple closed curve which bounds a disk $D$ in $M$ with $D\cap S=\bnd D$;
$D$ is said to be a {\em compressing disk\/}. Obviously, a geometrically
compressible surface is compressible. On the other hand there are
geometrically incompressible surfaces which fail to be incompressible.
However, the Loop theorem asserts that any such surface must be one-sided.
Summing up we have the following proposition.

\begin{prop}
A two-sided surface $S$ in $M$ is compressible if and only if it is
geometrically compressible.
\end{prop}

If $S$ is geometrically compressible and $D$ is a compressing disk then we
can obtain a new surface $S'$ as follows: we cut open $S$ along $\bnd D$
and glue to the obtained boundary curves two copies of $D$. We say that
$S'$ arises from $S$ by suturing along $D$. A surface is obtained from $S$
by {\em suturing along disks\/} if it is obtained by repeating this process
as often as necessary.

Given two embedded surfaces $S$ and $S'$ in $M$, we say that $S'$ arises
from $S$ by {\em collapsing along the normal bundle of $S'$\/} if there is a
regular neighborhood of $S'$ diffeomorphic to the total space of the normal
bundle $\pi \co N(S')\to S'$ containing $S$ and such that the restriction of
$\pi$ to $S$ is a covering of $S'$.

\begin{defi*}
Let $S$ and $S'$ be two embedded, possibly empty, surfaces in $M$. The
surface $S'$ arises from $S$ by {\em surgery\/} if it does by a combination
of isotopies, suturing along disks, discarding inessential spheres and
collapsing along the normal bundle of $S'$.
\end{defi*}

Observe that if the surface $S$ bounds a handlebody in $M$ then $\emptyset$
arises from $S$ by surgery. Similarly, the interior boundary of a
compression body arises from the exterior boundary by surgery. Recall that
a {\em compression body\/} $C$ is a compact orientable and irreducible
3--manifold which has a boundary component called the {\em exterior
boundary\/} $\bnd_e C$ such that the homomorphism $\pi_1(\bnd_eC)\to\pi_1(C)$
is surjective; the interior boundary is the union of all the remaining
boundary components. The genus of a compression body is the genus of its
exterior boundary.

A {\em Heegaard splitting\/} of the compact 3--manifold $M$ is a decomposition
$M=U\cup V$ into two compression bodies with disjoint interior and
separated by the corresponding exterior boundary, the so-called {\em
Heegaard surface} $\bnd_e U=\bnd_e V$. Heegaard splittings can be obtained
for example from Morse functions on $M$. Moise \cite{Moise} proved that
every topological 3--manifold admits a unique smooth structure; in
particular, every 3--manifold has a Heegaard splitting. The {\em Heegaard
genus\/} $g(M)$ of $M$ is the minimal genus of a Heegaard splitting of $M$.
It is a well-known fact that if $M$ is closed then $2g(M)+2$ is equal to
the minimal number of critical points of a Morse function on $M$.

As Morse functions can be perturbed to introduce new critical points new
Heegaard splittings can be obtained from other Heegaard splittings by
attaching new handles. A Heegaard splitting which is obtained from another
one by this process is said to be obtained by {\em stabilization\/}. It is
well-known that a Heegaard splitting $M=U\cup V$ arises by stabilization if
and only if there are two essential properly embedded disks $D_U\subset U$
and $D_V\subset V$ whose boundaries $\bnd D_U=\bnd D_V$ intersect in a
single point. A Heegaard splitting is {\em reducible\/} if there are two
essential properly embedded disks $D_U\subset U$ and $D_V\subset V$ with
$\bnd D_U=\bnd D_V$. Every reducible Heegaard splitting of an irreducible
3--manifold is stabilized.

Let $\Sigma\subset M$ be a (say) connected surface separating $M$ into two
components $N_1$ and $N_2$ and $f_1$ and $f_2$ be Morse functions on $N_1$
and $N_2$ whose values and derivatives of first and second order coincide
along $\Sigma$. Then the function $f \co M\to\BR$ given by $f(x)=f_i(x)$ if
$x\in N_i$ is a Morse function. Similarly, if a  3--manifold $M$ is
decomposed into codimension $0$ submanifolds $N_1,\dots,N_k$ with disjoint
interior then Heegaard splittings of $N_1,\dots,N_k$, fulfilling again some
normalization, can be merged to obtain a Heegaard splitting of $M$. See
Schultens \cite{Schultens} for a precise description of this process which is called
{\em amalgamation\/}. As in the case of stabilization, there is a criterium
to determine if a Heegaard splitting arises by amalgamation. One has namely
that this is the case for the Heegaard splitting $M=U\cup V$ if and only if
there are two essential properly embedded disks $D_U\subset U$ and
$D_V\subset V$ whose boundaries are disjoint. A Heegaard splitting $M=U\cup
V$ which is not reducible but such that there are two essential properly
embedded disks $D_U\subset U$ and $D_V\subset V$ with disjoint boundary is
said to be {\em weakly reducible\/}. A Heegaard splitting is {\em strongly
irreducible\/} if it is not reducible or weakly reducible. With this
terminology, we can summarize the above discussion as follows:
\begin{quote}
Every Heegaard splitting that is not strongly irreducible can be obtained
by amalgamation and stabilization from other splittings.
\end{quote}

Before stating a more precise version of this claim we need a last
definition.

\begin{defi*}
A {\em generalized Heegaard splitting\/} of $M$ is a pair of disjoint
embedded possibly disconnected surfaces $(\Sigma_I,\Sigma_H)$ such that the
following hold.
\begin{enumerate}
\item $\Sigma_I$ divides $M$ into two possibly disconnected manifolds $N_1$
and $N_2$.
\item The surfaces $\Sigma_1=\Sigma_H\cap N_1$ and $\Sigma_2=\Sigma_H\cap
N_2$ determine Heegaard splittings of $N_1$ and $N_2$ which can be
amalgamated to obtain a Heegaard splitting of $M$.
\end{enumerate}
A {\em generalized Heegaard splitting\/} $(\Sigma_I,\Sigma_H)$ of $M$ is {\em
strongly irreducible\/} if $\Sigma_I$ is incompressible and if the Heegaard
surface $\Sigma_H$ of $M\setminus\Sigma_I$ is strongly irreducible.
\end{defi*}

We have now the following crucial theorem,

\begin{sat}[Scharlemann--Thompson
\cite{Scharlemann-Thompson}]\label{all-generalized}
Every genus $g$ Heegaard splitting arises from first (1) amalgamating a
strongly irreducible generalized Heegaard splitting such that the involved
surfaces have at most genus $g$ and then (2) stabilizing the obtained
Heegaard splitting.
\end{sat}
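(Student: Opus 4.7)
My plan is to argue by induction on a suitable complexity of the Heegaard surface $\Sigma$ of the splitting $M=U\cup_\Sigma V$. The base case is when $\Sigma$ is already strongly irreducible: the trivial generalized Heegaard splitting $(\Sigma_I,\Sigma_H)=(\emptyset,\Sigma)$ is strongly irreducible in the sense of the definition, its amalgamation equals $\Sigma$, and no stabilization is needed. So I may assume that $\Sigma$ is weakly reducible, meaning there exist essential properly embedded disks $D_U\subset U$ and $D_V\subset V$ with $\bnd D_U\cap\bnd D_V=\emptyset$ in $\Sigma$.

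The main construction is an \emph{untelescoping} of $\Sigma$. I would pick maximal disjoint collections $\calD_U\subset U$ and $\calD_V\subset V$ of essential disks whose boundary circles are pairwise disjoint in $\Sigma$, and then reorganize the handle decomposition induced on $M$ by the splitting so that $M$ is built by attaching, in alternation, the 1--handles of $U$ dual to $\calD_U$, then the 2--handles dual to $\calD_U$, then the 1--handles coming from $\calD_V$, and so on. Equivalently, compressing $\Sigma$ simultaneously along all disks in $\calD_U\cup\calD_V$ yields a (possibly disconnected) surface that decomposes as thin surfaces $\Sigma_I$ (separating $M$ into pieces $N_1,\dots,N_k$) interleaved with thick surfaces $\Sigma_H$ (whose restriction to each $N_i$ is a Heegaard surface of $N_i$). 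Since each component of $\Sigma_I$ and $\Sigma_H$ arises from $\Sigma$ by compressions, all of them have genus at most~$g$.

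Two things still need to be checked, and the hard part will be the first. One must verify that $\Sigma_I$ is incompressible in $M$ and that on each piece $N_i$ the Heegaard surface $\Sigma_H\cap N_i$ is strongly irreducible. Incompressibility of $\Sigma_I$ follows from maximality of the disk collections, since any compressing disk for $\Sigma_I$ would let us enlarge either $\calD_U$ or $\calD_V$. The main obstacle is strong irreducibility of the thick pieces: a single untelescoping step may leave some $N_i$ whose thick surface is itself weakly reducible, and one must iterate the construction on those pieces. To see that the iteration terminates, I would use a Scharlemann--Thompson style complexity, for instance the multiset of genera of thick surfaces ordered lexicographically, which strictly decreases at each nontrivial untelescoping step. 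Finally, amalgamation is the inverse of untelescoping up to handle slides and the insertion or cancellation of trivial complementary handle pairs; the latter correspond exactly to stabilizations. Hence amalgamating the generalized Heegaard splitting produced by the iterated untelescoping, and stabilizing if necessary, returns a Heegaard splitting isotopic to $\Sigma$, as desired.
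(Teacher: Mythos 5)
The paper states this result with attribution to Scharlemann--Thompson and does not itself give a proof, so there is nothing in the text to compare against; what follows assesses your argument on its own terms. Your overall strategy---untelescoping along a weak reducing system and iterating with a terminating complexity---is the correct Scharlemann--Thompson route, but the step you treat as routine is in fact where the argument breaks.

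You assert that incompressibility of $\Sigma_I$ "follows from maximality of the disk collections, since any compressing disk for $\Sigma_I$ would let us enlarge either $\calD_U$ or $\calD_V$." This does not work as stated: a compressing disk $E$ for a component of $\Sigma_I$ lives in $M\setminus\Sigma_I$, not in $U$ or $V$, and in general it meets the thick surface $\Sigma_H$. There is no obvious way to push $E$ off $\Sigma_H$ so that it becomes an essential disk in $U$ or $V$ disjoint from $\calD_U\cup\calD_V$, and even when this can be arranged the disk you get may simply be parallel into your maximal collection and yield nothing new. The actual Casson--Gordon and Scharlemann--Thompson proof is a \emph{minimization}, not a maximization: among all generalized Heegaard splittings obtained from $\Sigma$ by weak reduction, one takes one of minimal width, and then shows by surgery arguments on $E\cap\Sigma_H$---the prototype being Scharlemann's lemma, reproduced in the survey, that a disk whose boundary is essential in a strongly irreducible Heegaard surface must bound a disk on one side---that a compressing disk for a thin surface, or a weak reduction of a thick surface, would produce a strictly thinner splitting. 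That argument is the missing content. Two secondary points: the complexity you propose ("multiset of genera of thick surfaces") behaves badly once surfaces disconnect or acquire boundary; the Scharlemann--Thompson width is calibrated so that every nontrivial untelescoping strictly decreases it, which also lets you run a single global minimization in place of the iterated induction and handles incompressibility and termination at once. And when a thin sphere appears, irreducibility of $M$ forces the previous splitting to be reducible, hence stabilized; destabilizing at that point is exactly where the "then stabilize" clause of the statement enters, and it should be made explicit rather than absorbed into the amalgamation bookkeeping.
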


This theorem is in some way a more precise version of the following result
of Casson and Gordon \cite{Casson-Gordon}.

\begin{sat}[Casson--Gordon]
If an irreducible 3--manifold $M$ admits a weakly reducible Heegaard
splitting then $M$ contains an incompressible surface.
\end{sat}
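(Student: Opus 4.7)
The plan is to follow the classical Casson--Gordon simultaneous compression technique (the statement can also be extracted from Theorem~\ref{all-generalized}). Let $M=U\cup_\Sigma V$ be the given weakly reducible splitting, so by definition there exist essential disks $D_U\subset U$ and $D_V\subset V$ with $\partial D_U\cap\partial D_V=\emptyset$ on $\Sigma$. I would consider the set $\mathcal P$ of all pairs $(\mathcal D_U,\mathcal D_V)$ where $\mathcal D_U\subset U$ and $\mathcal D_V\subset V$ are systems of disjoint essential compressing disks with $\partial\mathcal D_U\cap\partial\mathcal D_V=\emptyset$; by hypothesis $\mathcal P\neq\emptyset$. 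Among all pairs in $\mathcal P$ I would choose one that maximizes $\chi(\hat\Sigma)$, where $\hat\Sigma\subset M$ is the surface obtained by simultaneously compressing $\Sigma$ along $\mathcal D_U\cup\mathcal D_V$, breaking ties by minimizing the number of $2$--sphere components. Since $M$ is irreducible, every sphere component of $\hat\Sigma$ bounds a ball; after discarding these we are left with a surface $S$, which I claim is nonempty and incompressible, providing the desired incompressible surface.

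To prove the claim, the key structural remark is that each component of $M\setminus\hat\Sigma$ is a handlebody, namely a chunk of $U$ cut along $\mathcal D_U$ (or of $V$ cut along $\mathcal D_V$), possibly with $3$--ball ``blisters'' glued on along the disk copies transferred across by the complementary compression. Suppose toward contradiction that some non-sphere component $F$ of $\hat\Sigma$ admits a compressing disk $E\subset M$ with $\partial E$ essential in $F$. Then $E$ lies in a single handlebody piece $W$; without loss of generality $W$ sits on the $U$--side, so $E$ can be taken to lie in $U$ after absorbing blisters. I would then perform innermost-disk surgery on $E$ against the disk copies of $\mathcal D_U$ inside $F$ to produce an essential disk $E'\subset U$ whose boundary lies on $\Sigma\setminus(\partial\mathcal D_U\cup\partial\mathcal D_V)$ and is essential there. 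Adjoining $E'$ to $\mathcal D_U$ yields a pair in $\mathcal P$ of strictly larger complexity (one more compression, hence $\chi(\hat\Sigma)$ larger by $2$), contradicting maximality. The symmetric argument handles the case in which $W$ is on the $V$--side.

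The principal technical obstacle is this push-back step: the boundary $\partial E$ lies on $F\subset\hat\Sigma$, which contains copies of disks from both $\mathcal D_U$ and $\mathcal D_V$, and one must surger $E$ along innermost intersections with these disk copies while preserving essentiality of $\partial E'$ in $\Sigma$. This is where the irreducibility of $M$ enters a second time, to rule out that innermost cutoffs bound inessential disks and collapse the argument. A minor subsidiary point is verifying that $S$ is nonempty: if $\hat\Sigma$ consisted entirely of $2$--spheres, one could use irreducibility of $M$ and the structure above to show that the original splitting $\Sigma$ is reducible, hence stabilized, and iterate the argument after destabilization on a strictly simpler Heegaard splitting of $M$.
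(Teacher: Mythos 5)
The paper does not actually prove this theorem: it is stated as background (``a more precise version'' of which is \fullref{all-generalized}) with a citation to Casson--Gordon \cite{Casson-Gordon}, so there is no proof in the text to compare against. Your proposal is a reconstruction of the original Casson--Gordon argument, and the overall strategy -- simultaneously compress $\Sigma$ along a disjoint pair of disk systems, extremize a complexity, and use irreducibility to push a putative compressing disk for $\hat\Sigma$ back to an essential disk for $U$ or $V$ that enlarges the system -- is indeed the right one, and you are honest about where the work is.

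That said, one stated step is simply false, and a second is described in a way that would not compile into a correct proof. The structural claim that every component of $M\setminus\hat\Sigma$ is a handlebody fails: a blister $D\times[-\epsilon,\epsilon]$ coming from $D\in\mathcal D_V$ is attached to a chunk $U_i$ of $U\mid\mathcal D_U$ along an \emph{annulus} (the annular neighbourhood of $\partial D$ in $\Sigma$), not along a disk, so the result is $U_i$ with a $2$--handle attached. For example, already for a genus--$3$ splitting with $\mathcal D_U=\{D_1\}$, $\mathcal D_V=\{D_2\}$ and $\partial D_1,\partial D_2$ disjoint, separating and non-parallel, $\hat\Sigma$ is three tori and the ``middle'' chunk plus its blister has \emph{disconnected} boundary, hence is certainly not a handlebody. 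This is not fatal -- the handlebody structure is never really what you need -- but the sentence ``$E$ can be taken to lie in $U$ after absorbing blisters'' is carrying more weight than it can bear: since the blister meets $U_i$ along an annulus one must control $E\cap\Sigma$, and the genuine content is a minimal-position/innermost-disk argument against $\Sigma$ (not against the disk copies of $\mathcal D_U$ inside $F$, which $E$ meets only along $\partial E$ and which are handled by an isotopy in $F$, with irreducibility then ruling out that the pushed-off curve becomes inessential in $\Sigma$). Finally, the closing remark about iterating after destabilization is unnecessary here: with the paper's convention ``weakly reducible $=$ not reducible $+$ disjoint disk pair,'' showing the splitting is reducible in the all-spheres case is already the desired contradiction, no iteration required. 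You should also spell out that $\mathcal P$ consists of systems of pairwise non-parallel disks, or $\max\chi(\hat\Sigma)$ is not even finite; with that convention your complexity (which is just the disk count) differs from Casson--Gordon's $\sum(1-\chi)$ over non-sphere components, and while I believe it can be made to work, the non-parallelism of the produced disk $E'$ with the disks already in $\mathcal D_U\cup\mathcal D_V$ must be argued (again via irreducibility and the assumption that the splitting is not reducible) rather than taken for granted.
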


From our point of view, \fullref{all-generalized} asserts that most of the
time it suffices to study strongly irreducible splittings. However, the use
of \fullref{all-generalized} can be quite cumbersome because of the amount
of notation needed: for the sake of simplicity we will often just prove
claims for strongly irreducible splittings and then claim that the general
case follows using \fullref{all-generalized}.

As we just said, questions about Heegaard splittings can be often reduced
to questions about strongly irreducible splittings. And this is a lucky
fact since, while Heegaard splittings can be quite random, strongly
irreducible splittings show an astonishing degree of rigidity as shown for
example by the following lemma.

\begin{lem}{\rm (Scharlemann \cite[Lemma 2.2]{Scharlemann-intersections})} \qua
\label{lem:disks-in-strongly}
Suppose that an embedded surface $S$ determines a strongly irreducible
Heegaard splitting $M=U\cup V$ of a 3--manifold $M$ and that $D$ is an
embedded disk in $M$ transverse to $S$ and with $\bnd D\subset S$ then
$\bnd D$ also bounds a disk in either $U$ or $V$.
\end{lem}

From \fullref{lem:disks-in-strongly}, Scharlemann
\cite{Scharlemann-intersections} derived the following useful description
of the intersections of a strongly irreducible Heegaard surface with a
ball.

\begin{sat}[Scharlemann]
Let $M=U\cup V$ be a strongly irreducible Heegaard splitting with Heegaard
surface $S$. Let $B$ be a ball with $\bnd B$ transversal to $S$ and such
that the two surfaces $\bnd B\cap U$ and $\bnd B\cap V$ are incompressible
in $U$ and $V$ respectively. Then, $S\cap B$ is a connected planar surface
properly isotopic in $B$ to one of $U\cap\bnd B$ and $V\cap\bnd B$.
\end{sat}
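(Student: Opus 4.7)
The plan is to run an innermost disk argument on $\bnd B$, using \fullref{lem:disks-in-strongly} as the main technical input, together with the irreducibility of $M$ (and of the compression body $U$) and the incompressibility hypothesis, to isotope $S$ into a model position. First I would isotope $S$ so as to minimize the number $|S\cap\bnd B|$ of intersection curves, keeping $\bnd B$ transverse to $S$; the aim is then to derive a contradiction from any configuration that is not the one claimed.

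Let $c$ be an innermost curve of $S\cap\bnd B$ on the sphere $\bnd B$, bounding a disk $D\subset\bnd B$ with $\mathrm{int}(D)\cap S=\emptyset$; after relabelling I may assume $\mathrm{int}(D)\subset U$. Since $D$ is an embedded disk in $M$ transverse to $S$ with $\bnd D\subset S$, \fullref{lem:disks-in-strongly} applies and yields that $c=\bnd D$ bounds a disk in $U$ or in $V$. Now pushing $D$ slightly off $\bnd B$ into $U$ already gives a disk in $U$ with boundary (an isotope of) $c$, so $c$ automatically bounds a disk in $U$; if $c$ were essential in $S$ and the lemma also gave a disk in $V$ bounded by $c$, then these two disks with common boundary would form a reducing pair, contradicting strong irreducibility. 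Hence either $c$ is inessential on $S$ or $c$ bounds a disk only on the $U$-side.

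In the essential subcase, the incompressibility of $\bnd B\cap U$ in $U$ allows one to isotope the compressing disk $D_U\subset U$ (with $\bnd D_U=c$) so that $\mathrm{int}(D_U)\cap\bnd B=\emptyset$, via a standard innermost-circle argument on $D_U\cap\bnd B$. The sphere $D\cup D_U$ then bounds a ball in $\overline{U}$ by irreducibility of the compression body $U$, and this ball guides an isotopy of $S$ that strictly reduces $|S\cap\bnd B|$, contradicting minimality. In the inessential subcase, $c$ bounds a disk on $S$; taking such a disk $D_S\subset S$ innermost among those whose boundary lies on $\bnd B$, the sphere $D\cup D_S$ bounds a ball in $M$ by irreducibility, giving an analogous reducing isotopy. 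Iterating forces a minimal configuration in which $S\cap B$ is connected and planar, and in which a product region between $S\cap B$ and one of $\bnd B\cap U$ or $\bnd B\cap V$ provides the desired proper isotopy in $B$.

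The main obstacle is verifying that each reducing isotopy cleanly decreases $|S\cap\bnd B|$ without introducing new intersections, and that the incompressibility hypothesis remains available throughout the reduction. The inessential subcase requires a secondary innermost disk argument, this time on $S$ itself rather than on $\bnd B$, and assembling the local ball-region isotopies into a single global proper isotopy in $B$ is the most delicate step, since one must carefully match the components of $S\cap B$ with those of $\bnd B\cap U$ or $\bnd B\cap V$ in the end.
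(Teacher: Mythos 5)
The paper itself does not prove this theorem; it merely states it and cites Scharlemann's paper \cite{Scharlemann-intersections}, noting only that it is ``derived from'' \fullref{lem:disks-in-strongly}. So there is no in-text proof to compare against, and your proposal has to be judged on its own.

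The central problem is that your invocation of \fullref{lem:disks-in-strongly} is vacuous. Your innermost disk $D$ lies on $\bnd B$ with $\mathrm{int}(D)\subset U$, so $D$ is already a properly embedded disk in $U$ with boundary $c$. The lemma's conclusion, that $\bnd D$ bounds a disk ``in $U$ or in $V$,'' is therefore automatic and carries no information: the disjunction is satisfied by $D$ itself, and nothing forces a disk in $V$. You acknowledge this (``$c$ automatically bounds a disk in $U$'') but then treat the lemma as if it might ``also'' produce a disk in $V$, which it does not claim. The real force of Scharlemann's lemma is for disks whose \emph{interior} meets $S$ transversally in circles; that is exactly what one gets when studying a candidate compressing disk for the surface $S\cap B$ \emph{inside} the ball $B$, and this is the analysis your sketch never carries out. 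The theorem's conclusion is a statement about the surface $S\cap B$ in the interior of $B$ --- that it is connected, planar, incompressible and $\bnd$-parallel --- none of which can be read off from curves on $\bnd B$ alone. Even if you could drive $|S\cap\bnd B|$ down to one circle, $S\cap B$ could a priori have genus or be knotted, and you would still owe an argument that it is a $\bnd$-parallel disk.

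There is a second, structural difficulty: you isotope $S$ globally to minimize $|S\cap\bnd B|$, but the hypotheses (incompressibility of $\bnd B\cap U$ in $U$ and of $\bnd B\cap V$ in $V$) and the conclusion (a \emph{proper} isotopy in $B$, hence rel $\bnd B$) both refer to the fixed boundary configuration $S\cap\bnd B$. Moving $S$ ambiently changes $\bnd B\cap U$ and $\bnd B\cap V$ and may destroy their incompressibility; moreover the theorem does not assert $|S\cap\bnd B|$ is small, so minimizing it is not the right target. A correct argument keeps $\bnd B$ and $S\cap\bnd B$ fixed, examines compressing and $\bnd$-compressing disks for $S\cap B$ inside $B$, uses \fullref{lem:disks-in-strongly} there to see that all compressions of $S\cap B$ in $B$ go to a single side, rules out compressions by the incompressibility hypotheses, and then invokes the classification of incompressible $\bnd$-incompressible surfaces in a ball to conclude $\bnd$-parallelism. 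As written, your sketch omits exactly this interior analysis, which is where the theorem lives.
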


In the same paper, Scharlemann also determined how a strongly irreducible
Heegaard splitting can intersect a solid torus.

Another instance in which the rigidity of strongly irreducible splittings
becomes apparent is the following lemma restricting which surfaces can
arise from a strongly irreducible surface by surgery.

\begin{lem}{(\rm Suoto \cite{Juan-book})} \qua \label{strongly}
Let $S$ and $S'$ be closed embedded surface in $M$. If $S$ is a strongly
irreducible Heegaard surface and $S'$ is obtained from $S$ by surgery and
has no parallel components then one of the following holds.
\begin{enumerate}
\item $S'$ is isotopic to $S$.
\item $S'$ is non-separating and there is a surface $\hat S$ obtained from
$S$ by surgery at a single disk and such that $\hat S$ is isotopic to the
boundary of a regular neighborhood of $S'$. In particular, $S'$ is
connected and $M\setminus S'$ is a compression body.
\item $S'$ is separating and $S$ is, up to isotopy, disjoint of $S'$.
Moreover, $S'$ is incompressible in the component $U$ of $M\setminus S'$
containing $S$, $S$ is a strongly irreducible Heegaard surface in $U$ and
$M\setminus U$ is a collection of compression bodies.
\end{enumerate}
\end{lem}

\fullref{strongly} is well-known to many experts. In spite of that we
include a proof.

\begin{proof}
Assume that $S'$ is not isotopic to $S$. We claim that in the process of
obtaining $S'$ from $S$, some surgery along disks must have been made.
Otherwise $S$ is, up to isotopy, contained in regular neighborhood
$\CN(S')$ of $S'$ such that the restriction of the projection $\CN(S')\to
S'$ is a covering. Since $S$ is connected, this implies that $S'$ is
one-sided and that $S$ bounds a regular neighborhood of $S'$. However, no
regular neighborhood of an one-sided surface in an orientable 3--manifold is
homeomorphic to a compression body. This contradicts the assumption that
$S$ is a Heegaard splitting.

We have proved that there is a surface $\hat S$ obtained from $S$ by
surgery along disks and discarding inessential spheres which is, up to
isotopy, contained in $\CN(S')$ and such that the restriction of the
projection $\CN(S')\to S'$ to $\hat S$ is a covering. Since $S$ determines
a strongly irreducible Heegaard splitting it follows directly from the
definition that all surgeries needed to obtain $\hat S$ from $S$ occur to
the same side. In particular, $\hat S$ divides $M$ into two components $U$
and $V$ such that $U$ is a compression body with exterior boundary $\hat S$
and such that $\hat S$ is incompressible in $V$. Moreover, $V$ is connected
and $S$ determines a strongly irreducible Heegaard splitting of $V$.

\begin{figure}[ht!]
\begin{center}
\includegraphics[scale=0.5,angle=270]{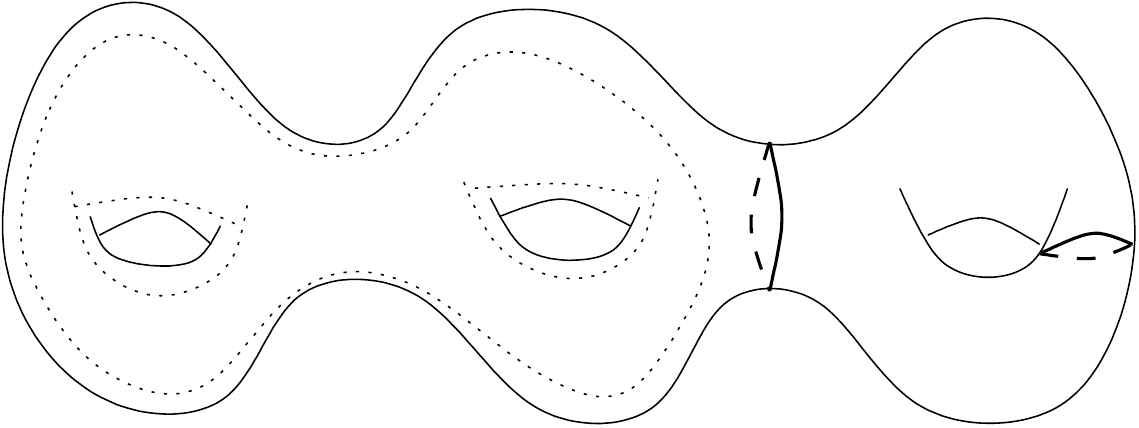}
\end{center}
\caption{The surface $S$ has genus $3$; the thick lines are the boundaries
of compressing disks; the surface $\hat S$ obtained from $S$ by surgery
along these disks is dotted.}
\label{fig:1}
\end{figure}

Assume that every component of $S'$ is two-sided. In particular, every
component of $\hat S$ is isotopic to a component of $S'$. If for every
component of $S'$ there is a single component of $\hat S$ isotopic to it,
then we are in case (3). Assume that this is not the case. Then there are
two components $\hat S_1$ and $\hat S_2$ which are isotopic to the same
component $S'_0$ and which bound a trivial interval bundle $W$ homeomorphic
to $S'_0\times[0,1]$ which does not contain any further component of $\hat
S$. In particular $W$ is either contained in $U$ or in $V$. Since the
exterior boundary of a compression body is connected we obtain that $W$
must be contained in $V$ and since $V$ is connected we have $W=V$. The
assumption that $S'$ does not have parallel components implies that $S'$ is
connected. It remains to prove that $S'$ arises from $S$ by surgery along a
single disk. The surface $S$ determines a Heegaard splitting of $V$ and
hence it is isotopic to the boundary of a regular neighborhood of $\hat
S\cup\Gamma$ where $\Gamma$ is a graph in $V$ whose endpoints are contained
in $\bnd V$. If $\Gamma$ is not a segment then it is easy to find two
compressible simple curves on $S$ which intersect only once (see \fullref{fig:2}).

\begin{figure}[ht!]
\begin{center}
\includegraphics[scale=0.5,angle=270]{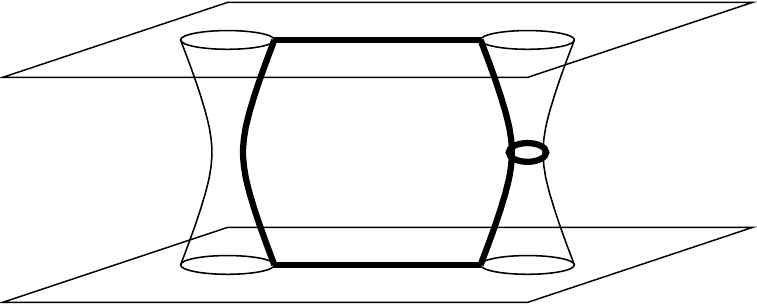}
\end{center}
\caption{Two disks intersecting once}
\label{fig:2}
\end{figure}

By \fullref{lem:disks-in-strongly} each one of these curves bounds a disk
in one of the components of $V\setminus S$ contradicting the assumption
that $S$ is strongly irreducible. This proves that $\hat S$ arises from $S$
by surgery along a single disk. We are done if every component of $\hat S$
is two-sided.

The argument in the case that $S'$ has at least a one-sided component is
similar.
\end{proof}

We conclude this section with some remarks about the curve complex $\CC(S)$
of a closed surface $S$ of genus $g\ge 2$ and its relation to Heegaard
splittings. The curve complex is the graph whose vertices are isotopy
classes of essential simple closed curves in $S$. The edges correspond to
pairs of isotopy classes that can be represented by disjoint curves.
Declaring every edge to have unit length we obtain a connected metric graph
on which the mapping class group $\Map(S)$ acts by isometries. If $S$ is
the a Heegaard surface then we define the {\em distance in the curve
complex\/} of the associated Heegaard splitting $M=U\cup V$ to be the minimal
distance between curves bounding essential disks in $U$ and in $V$. Using
this terminology, $M=U\cup V$ is reducible if the distance is $0$, it is
weakly reducible if the distance is $1$ and it is strongly irreducible if
the distance is at least $2$. The following result due to Hempel
\cite{Hem01} asserts that a manifold admitting a Heegaard splittings with
at least distance $3$ is irreducible and atoroidal.

\begin{sat}[Hempel]\label{Hempel}
If a 3--manifold admits a Heegaard splitting with at least distance 3 then
it is irreducible and atoroidal.
\end{sat}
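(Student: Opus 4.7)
The plan is to prove the contrapositive: if $M$ is reducible or contains an essential torus, then every Heegaard splitting $M=U\cup V$ has curve-complex distance $d(S)\leq 2$.

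If $M$ is reducible I would invoke Haken's Lemma, according to which any Heegaard splitting of a reducible $3$--manifold is itself reducible: there exist essential disks $D_U\subset U$ and $D_V\subset V$ with $\partial D_U=\partial D_V$ on $S$, giving $d(S)=0$. So from now on assume $M$ is irreducible and $T\subset M$ is an essential torus. Isotope $T$ to meet $S$ transversely with $|T\cap S|$ minimal. Standard bigon and innermost-disk reductions, combined with incompressibility of $T$ and irreducibility of $M$, force each circle of $T\cap S$ to be essential on both $T$ and $S$, and each component of $T\cap U$ and $T\cap V$ to be essential (incompressible and $\partial$--incompressible) in the respective handlebody. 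Since $\pi_1$ of a handlebody is free and has no $\BZ^2$--subgroup, $T$ cannot be isotoped into either $U$ or $V$, so $T\cap S\neq\emptyset$. The intersection curves being essential on the torus, they are mutually parallel on $T$, and hence every component of $T\cap U$ and of $T\cap V$ is an essential annulus in the corresponding handlebody. Choose adjacent annuli $A_U\subset T\cap U$ and $A_V\subset T\cap V$ sharing a common boundary curve $\gamma\subset S$.

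The key step is the following fact: \emph{every essential annulus $A$ in a handlebody $H$ of genus at least two admits an essential meridian disk $D\subset H$ whose boundary is disjoint from $\partial A$ on $\partial H$}. Granting this, applying it to $A_U$ and $A_V$ produces essential disks $D_U\subset U$ and $D_V\subset V$ whose boundaries are disjoint from $\gamma$ on $S$. The path $\partial D_U\to\gamma\to\partial D_V$ in $\CC(S)$ has length at most two, so $d(S)\leq 2$, contradicting the hypothesis $d(S)\geq 3$. The low-genus case $g(S)=1$ (lens spaces) is excluded since such manifolds are already known to be atoroidal.

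The main obstacle is proving the italicized fact. My approach is to fix a complete disk system $\{E_1,\ldots,E_g\}$ for $H$, isotope $A$ to minimize $|A\cap\bigcup_i E_i|$, and analyze an outermost arc $\alpha$ of $A\cap\bigcup_i E_i$ on some $E_i$. If $\alpha$ connected distinct components of $\partial A$, the subdisk of $E_i$ it cuts off would be a $\partial$--compression disk for $A$, contradicting $\partial$--incompressibility. So $\alpha$ has both endpoints on a single component of $\partial A$, and banding the cut-off subdisk with the corresponding subdisk of $A$, followed by a small push off $A$, produces a disk $D\subset H$ whose boundary is disjoint from $\partial A$. A minimality argument together with the fact that $[\partial A]$ is non-trivial in the free group $\pi_1(H)$ (ensured by essentialness of $A$) guarantees that $D$ is essential in $H$, completing the proof.
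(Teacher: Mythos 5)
The paper itself offers no proof of this statement; it simply cites Hempel's article \cite{Hem01}, so there is nothing internal to compare against. Your overall outline --- argue the contrapositive, invoke Haken's lemma in the reducible case, and in the toroidal case put an essential torus $T$ in minimal position with respect to the Heegaard surface $S$ and extract disks disjoint from a curve of $T\cap S$ --- is the standard route to Hempel's result, and the reducible case is fine.

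The toroidal case, however, has a genuine gap. You assert that minimality of $|T\cap S|$ makes each annulus component of $T\cap U$ and $T\cap V$ ``essential (incompressible and $\partial$--incompressible)'' in the respective handlebody, and then state and prove an auxiliary fact about such essential annuli. But a handlebody contains no incompressible, $\partial$--incompressible properly embedded surfaces other than disks: isotope such a surface off a complete disk system (circles vanish by incompressibility and irreducibility, outermost arcs vanish by $\partial$--incompressibility), leaving a properly embedded incompressible surface in a ball, which must be a disk. So every incompressible annulus in a handlebody \emph{is} $\partial$--compressible, and minimality of $|T\cap S|$ cannot deliver $\partial$--incompressibility. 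Your italicized key fact is then vacuously true but useless, and, tellingly, your proof of it excludes precisely the case that does occur: in a genuinely minimal position the outermost arcs of $A\cap\bigcup E_i$ are \emph{spanning} arcs, the case you dismiss via ``$\partial$--incompressibility,'' while the inessential-arc case you analyze at length can be eliminated by a further isotopy. What one actually needs is: for an incompressible, non-$\partial$--parallel annulus $A\subset U$, $\partial$--compress along the outermost disk $\Delta$ to produce a disk $D$ with $\partial D=\partial N(\gamma_1\cup b\cup\gamma_2)$ disjoint from $\partial A$, and then show $D$ is \emph{essential}. That last step is exactly where the real content lies --- $\partial D$ could a priori be inessential on $S$ when $\gamma_1,\gamma_2$ cobound an annulus on $S$, and ruling this out (or showing it lets you reduce $|T\cap S|$) requires an argument that your proposal does not supply.
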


For more on Heegaard splittings see Scharlemann \cite{Scharlemann-handbook}
and for generalized Heegaard
splittings Saito--Scharlemann--Schultens \cite{SSS}.

%
%
%
%

\section{Hyperbolic 3--manifolds and Kleinian groups}\label{sec:hyp}

A {\em hyperbolic structure\/} on a compact 3--manifold $M$ is the conjugacy
class of a discrete and faithful representation $\rho \co \pi_1(M)\rightarrow
\PSL_2(\BC)$, such that $N_\rho=\BH^3/\rho(\pi_1(M))$ is homeomorphic to
the interior of $M$ by a homeomorphism inducing $\rho$. From this point of
view, \hyperlink{MRTh}{Mostow's Rigidity Theorem} asserts that whenever $M$ is closed then
there is at most one hyperbolic structure.

It is never to early to remark that most results concerning hyperbolic
3--manifolds are still valid in the setting of manifolds of pinched negative
curvature (see for instance Canary \cite{Canary-tame} or Agol \cite{Agol}). 
In fact,
negatively curved metrics have a much greater degree of flexibility than
hyperbolic metrics and hence allow certain extremely useful constructions.
Further generalizations of hyperbolic metrics such as CAT(-1) metrics are
also ubiquitous; again because they are even more flexible than negatively
curved metrics. Recall that a geodesic metric space is CAT(-1) if, from the
point of view of comparison geometry it is at least as curved as hyperbolic
space (Bridson--Haefliger \cite{Bridson-Haffliger}).

The hyperbolic structure $N_\rho$ is {\em convex-cocompact\/} if there is a
convex $\rho(\pi_1(M))$--invariant subset $K\subset\BH^3$ with
$K/\rho(\pi_1(M))$ compact. Equivalently, the manifold $N_\rho$ contains a
compact convex submanifold $C$ such that $N_\rho\setminus C$ is
homeomorphic to $\bnd C\times\BR$.

If $S$ is a boundary component of $M$, then the {\em $S$--end\/} of $N_\rho$
is the end corresponding to $S$ under this homeomorphism. The end
corresponding to a component $S$ of $\bnd M$ is {\em convex-cocompact\/} if
$N_\rho$ contains a convex-submanifold $C$ such that $N_\rho\setminus C$ is
a neighborhood of the $S$--end of $N_\rho$ homeomorphic to $S\times\BR$.
Before going further we remind the reader of the following characterization
of the convex-cocompact structures.

\begin{lem}\label{cccq}
A hyperbolic structure $N_\rho$ is convex-cocompact if and only if for some
choice, and hence for all, of $p_{\BH^3}\in\BH^3$ the map
$$\pi_1(M)\to\BH^3, \qua \gamma\mapsto(\rho(\gamma))(p_{\BH^3})$$
is a quasi-isometric embedding. Here we endow $\pi_1(M)$ with the
left-invariant word-metric corresponding to some finite generating set.
\end{lem}

Recall that a map $\phi \co X_1\to X_2$ between two metric spaces is an
$(L,A)$--quasi-isometric embedding if
$$\frac 1Ld_{X_1}(x,y)-A\le d_{X_2}(\phi(x),\phi(y))\le Ld_{X_1}(x,y)+A$$
for all $x,y\in X_1$. An $(L,A)$--quasi-isometric embedding $\phi \co \BR\to X$
is said to be a quasi-geodesic.

Through out this note we are mostly interested in hyperbolic manifolds
without cusps. If there are cusps, ie if there are elements
$\gamma\in\pi_1(M)$ with $\rho(\gamma)$ parabolic, then there is an
analogous of the convex-cocompact ends: the geometrically finite ends.
Results about hyperbolic 3--manifolds without cusps can be often extended,
under suitable conditions and with lots of work, to allow general
hyperbolic 3--manifolds. We state this golden rule here.
\begin{quote}
Every result mentioned without cusps has an analogous result in the
presence of cusps.
\end{quote}

The geometry of convex-cocompact ends of $N_\rho$ is well-understood using
Ahlfors--Bers theory. Building on the work of Thurston, Canary
\cite{Canary-tame} described a different sort of end.

\begin{defi*}
An end $\CE$ of $N_\rho$ is {\em simply degenerate\/} if there is a sequence
of surfaces $(S_i)\subset N_\rho$ with the following properties.
\begin{itemize}
\item Every neighborhood of $\CE$ contains all but finitely many of the
surfaces $S_i$.
\item With respect to the induced path distance, the surface $S_i$ is
$CAT(-1)$ for all $i$.
\item If $C_1\subset N_\rho$ is a compact submanifold with $N_\rho\setminus
C_1$ homeomorphic to a trivial interval bundle, then $S_i$ is homotopic to
$\bnd C_1$ within $N_\rho\setminus C_1$.
\end{itemize}
\end{defi*}

The best understood example of manifolds with simply degenerate ends are
obtained as follows. A theorem of Thurston (see for example Otal \cite{Otal96})
asserts that whenever $S$ is a closed surface and $f\in\Map(S)$ is a
pseudo-Anosov mapping class on $S$, then the mapping torus
\begin{equation}\label{eq:map-torus}
M_f=(S\times[0,1])/((x,1)\simeq(f(x),0)
\end{equation}
admits a hyperbolic metric. The fundamental group of the fiber $\pi_1(S)$
induces an infinite cyclic cover $M_f'\to M_f$ homeomorphic to
$S\times\BR$. The surface $S\times\{0\}$ lifts to a surface, again denoted
by $S$, in $M_f'$ and there are many known ways to construct $CAT(-1)$
surfaces in $M_f'$ homotopic to $S$; for instance one can use simplicial
hyperbolic surfaces, minimal surfaces or pleated surfaces. For the sake of
concreteness, let $X$ be such a surface and $F$ be a generator of the deck
transformation group of the covering $M_f'\to M_f$. Then the sequences
$(F^i(X))_{i\in\BN}$ and $(F^{-i}(X))_{i\in\BN}$ fulfill the conditions in
the definition above proving that both ends of $M_f'$ are singly
degenerate.

Before going further recall that homotopic, $\pi_1$--injective simplicial
hyperbolic surfaces can be interpolated by simplicial hyperbolic surfaces.
In particular, we obtain that every point in $M_f'$ is contained in a
$CAT(-1)$ surface. The same holds for every point in a sufficiently small
neighborhood of a singly degenerated end. This is the key observation
leading to the proof of Thurston and Canary's \cite{Canary-covering}
covering theorem.

\begin{CTh}
\hypertarget{CThe}{Let}
$M$ and $N$ be infinite volume hyperbolic 3--manifolds with finitely
generated fundamental group and $\pi \co M\to N$ be a Riemannian covering.
Every simply degenerate end $\CE$ of $M$ has a neighborhood homeomorphic to
$E=S\times[0,\infty)$ such that $\pi(E)=R\times[0,\infty)$ where $R$ is a
closed surface and $\pi\vert_E \co E\to\pi(E)$ is a finite-to-one covering.
\end{CTh}

The \hyperlink{CThe}{Covering Theorem} would be of limited use if there were a third, call it
wild, kind of ends. However, the positive solution of the tameness
conjecture by Agol \cite{Agol} and Calegari--Gabai \cite{Calegari-Gabai},
together with an older but amazingly nice result of Canary
\cite{Canary-tame}, implies that that every end of a hyperbolic 3--manifold
without cusps is either convex-cocompact or simply degenerate; there is an
analogous statement in the presence of cusps.

\begin{sat}\label{tameness}
If $M$ is a hyperbolic 3--manifold with finitely generated fundamental group
then $M$ is homeomorphic to the interior of a compact 3--manifold. In
particular, in the absence of cusps, every end of $M$ is either
convex-cocompact or simply degenerate.
\end{sat}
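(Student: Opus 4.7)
The plan is to break the statement into its two genuinely distinct pieces: first the topological tameness assertion, and second, granted tameness, the dichotomy between convex-cocompact and simply degenerate ends. Only the first piece is really hard; the second is by now a relatively standard consequence once a manifold is known to be tame.

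For the tameness statement, I would begin by invoking Scott's compact core theorem to obtain a compact submanifold $C \subset M$ such that the inclusion induces an isomorphism $\pi_1(C) \to \pi_1(M)$. Writing $\bnd C = S_1 \sqcup \dots \sqcup S_k$, the goal is to show that the complementary component $E_i$ of $C$ adjacent to $S_i$ is homeomorphic to $S_i \times [0,\infty)$. Following the Calegari--Gabai shrinkwrapping strategy (or Agol's approach via end reductions and simplicial hyperbolic surfaces), I would distinguish two cases for each end $\CE_i$. If $\CE_i$ is bounded away from the convex core, one concludes convex-cocompactness and the product structure is classical. Otherwise, one produces a sequence of closed geodesics $\gamma_n$ exiting $\CE_i$ which are, after passing to subsequences, ``disk-busting'' for the end in the sense that they cannot be homotoped to $\bnd E_i$ through a given compact set. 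One then shrinkwraps (ie minimizes area in a constrained isotopy class) simplicial hyperbolic surfaces homotopic to $S_i$ around the collection $\{\gamma_1,\dots,\gamma_n\}$; the resulting surfaces $T_n$ are $CAT(-1)$ and have area bounded by $2\pi|\chi(S_i)|$ by Gauss--Bonnet. This uniform area bound, combined with a standard interpolation of simplicial hyperbolic surfaces between consecutive $T_n$, yields a proper exhaustion of $\CE_i$ by $CAT(-1)$ surfaces, which in turn provides a product structure $\CE_i \cong S_i \times [0,\infty)$.

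For the second statement, once tameness is in hand and we assume no cusps, I would argue as follows for each end $\CE$ of $M$, fixing a product neighborhood $\CE \cong S\times[0,\infty)$. Either there exists a neighborhood $U$ of $\CE$ whose intersection with the convex core of $M$ is empty, in which case $\CE$ is convex-cocompact by definition, or every neighborhood of $\CE$ meets the convex core. In the latter case, there is a sequence of closed geodesics $\gamma_n$ exiting $\CE$: indeed the convex core has the homotopy type of $M$, so its intersection with a sequence of neighborhoods of $\CE$ carries nontrivial loops representable by closed geodesics, which must exit by compactness of the core's projection elsewhere. For each $\gamma_n$ one produces a pleated (hence $CAT(-1)$) surface $X_n$ homotopic to $S$ in $\CE$ and realizing $\gamma_n$; since $\gamma_n$ exits $\CE$, so do the $X_n$, and this is exactly the definition of simply degenerate.

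The main obstacle is clearly step one: establishing the product structure on ends, ie the genuine content of the Tameness Theorem of Agol and Calegari--Gabai. The technical heart is controlling the topology of shrinkwrapped surfaces in the absence of a priori bounds and ensuring that the surfaces extracted this way are eventually embedded (up to isotopy) and separate the end into the correct product pieces. The interpolation argument that fills in between the $T_n$ to produce a proper exhaustion, while conceptually natural, also requires care since one must keep simplicial hyperbolic surfaces homotopic to $S$ throughout; this is where Canary's observation that homotopic $\pi_1$-injective simplicial hyperbolic surfaces can be interpolated—already invoked in the discussion of the \hyperlink{CThe}{Covering Theorem}—does the crucial work.
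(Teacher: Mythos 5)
The paper does not prove this theorem; it states it as a quoted combination of the resolution of the Tameness Conjecture (Agol, Calegari--Gabai) with Canary's earlier theorem that topologically tame hyperbolic 3--manifolds are geometrically tame. Your decomposition into (1) topological tameness and (2) the convex-cocompact versus simply degenerate dichotomy for tame ends without cusps is exactly the decomposition the paper's citation implicitly relies on, and your sketch is a faithful outline of those sources, so there is no conflict with the paper's treatment --- you are reconstructing the references, not diverging from them.

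Within the sketch, two places are worth flagging as much more compressed than they look. First, the step from ``uniformly area-bounded $CAT(-1)$ surfaces exiting the end, interpolated by simplicial hyperbolic surfaces'' to ``$\CE_i \cong S_i\times[0,\infty)$'' hides the technical core of both Agol's and Calegari--Gabai's arguments: one must show that the shrinkwrapped surfaces are (after isotopy) embedded, mutually disjoint, and cobound product regions, which is genuinely hard because the minimization process does not a priori respect embeddedness or the product structure; in Agol's treatment this is handled via end-reduction and PL techniques, and in Calegari--Gabai's via the laminar structure of the shrinkwrapped surfaces. You acknowledge this at the end, so it is a disclosed rather than a hidden gap, but it is where essentially all the work lies. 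Second, in your argument for the dichotomy you need that the pleated surfaces $X_n$ realizing the $\gamma_n$ actually exit the end along with $\gamma_n$; this relies on Thurston's bounded-diameter lemma for pleated surfaces relative to the thin part, which you use implicitly but do not state. Neither point is a gap relative to the paper, since the paper proves nothing here, but they are the places where a reader of your sketch who did not already know the literature would get stuck.
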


\fullref{tameness} asserts that in order to prove that a manifold without
cusps is convex-cocompact it suffices to prove that it does not have any
degenerate ends. On the other hand, the \hyperlink{CThe}{Covering Theorem} asserts that
degenerate ends cannot cover infinite volume 3--manifolds in interesting
ways. We obtain for example the following corollary.

\begin{kor}\label{ugly}
Let $M$ be trivial interval bundle or a handlebody and let
$\rho \co \pi_1(M)\to\PSL_2\BC$ be a hyperbolic structure on $M$ such that
$N_\rho=\BH^3/\rho(\pi_1(M))$ has no cusps. If $\Gamma\subset\pi_1(M)$ is a
finitely generated subgroup of infinite index then $\BH^3/\rho(\Gamma)$ is
convex-cocompact.
\end{kor}

Given a sequence of pointed hyperbolic 3--manifolds $(M_i,p_i)$ such that
the injectivity radius $\inj(M_i,p_i)$ of $M_i$ at $p_i$ is uniformly
bounded from below, then it is well-known that we may extract a
geometrically convergent subsequence; say that it is convergent itself.
More precisely, this means that there is some pointed 3--manifold $(M,p)$
such that for every large $R$ and small $\epsilon$, there is some $i_0$
such that for all $i\geq i_0$, there are $(1+\epsilon)$--bi-Lipschitz, base
points preserving, embeddings $\kappa_i^R \co (B_R(p,M),p)\rightarrow(M_i,p_i)$
of the ball $B_R(M,p)$ in $M$ of radius $R$ and center $p$. Taking $R$ and
$\epsilon$ in a suitable way, we obtain better and better embeddings of
larger and larger balls and we will refer in the sequence to these maps as
the {\em almost isometric embeddings\/} provided by geometric convergence. If
$M_i$ is a sequence of hyperbolic 3--manifolds and $M$ is isometric to the
geometric limit of some subsequence of $(M_i,p_i)$ for some choice of base
points $p_i\in M_i$ then we say that $M$ is a geometric limit of the
sequence $(M_i)$. We state here the following useful observation.

\begin{lem}\label{super-ugly}
If $M$ is a geometric limit of a sequence $(M_i)$, $K\subset M$ is a
compact subset such that the image $[\pi_1(K)]$ of $\pi_1(K)$ in $\pi_1(M)$
is convex-cocompact, and $\kappa_i:K\to M_i$ are the almost isometric maps
provided by geometric convergence, then, for all $i$ large enough, the
induced homomorphism
$$(\kappa_i)_* \co [\pi_1(K)]\to \pi_1(M_i)$$
is injective and has convex-cocompact image.
\end{lem}
\begin{proof}
Let $\tilde C$ be a convex, $[\pi_1(K)]$--invariant subset of $\BH^3$ with
$\tilde C/[\pi_1(K)]$ compact and let $C$ be the image of $\tilde C$ in
$M$. We find $i_0$ such that for all $i\ge i_0$ the almost isometric
embeddings given by geometric convergence are defined on $C$ and, moreover,
the lift $\tilde C\to \BH^3$ of the composition of (1) projecting from
$\tilde C$ to $C$, and (2) mapping $C$ into $M_i$ is a quasi-isometry. By
\fullref{cccq}, $\tilde C$ is quasi-isometric to $[\pi_1(K)]$. The claims
follows now applying again \fullref{cccq}.
\end{proof}

It is well-known that a pseudo-Anosov mapping class $f\in\Map(S)$ of a
surface fixes two projective classes of laminations
$\lambda^+,\lambda^-\in\CP\CM\CL(S)$; $\lambda^+$ is the attracting fix
point and $\lambda^-$ the repelling. More precisely, for any essential
simple closed curve $\gamma$ in $S$ we have
$$\lim_{n\to\infty}f^n(\gamma)=\lambda^+, \qua 
\lim_{n\to\infty}f^{-n}(\gamma)=\lambda^-$$
where the limits are taken in $\CP\CM\CL(S)$. If $M_f'$ denotes again the
infinite cyclic cover of the mapping torus $M_f$ then when $\vert n\vert$
becomes large then the geodesic representatives in $M_f'$ of $f^n(\gamma)$
leave every compact set. This implies that the laminations $\lambda^+$ and
$\lambda^-$ are in fact the {\em ending laminations\/} of $M_f'$. In general,
every singly degenerate end has an associated ending lamination
$\lambda_\CE$. More precisely, if $N_\rho$ is a hyperbolic structure on
$M$, $S$ is a component of $\bnd M$ and $\CE$ is the $S$--end of $N_\rho$,
then the ending lamination $\lambda_\CE$ is defined as the limit in the
space of laminations on $S$ of any sequence of simple closed curves
$\gamma_i$, on $S$, whose geodesics representatives $\gamma_i^*$ tend to
the end $\CE$ and are homotopic to $\gamma_i$ within $\CE$. Thurston's
ending lamination conjecture asserts that every hyperbolic structure on a
3--manifold, say for simplicity without cusps, is fully determined by its
{\em ending invariants\/}: the conformal structures associated to the
convex-cocompact ends and the ending lamination associated to the singly
degenerate ends. The ending lamination conjecture has been recently proved
by Minsky \cite{ELC1} and Brock--Canary--Minsky \cite{ELC2}. However, from
our point of view, the method of proof is much more relevant than the
statement itself: the authors prove that given a manifold and ending
invariants, satisfying some necessary conditions, then it is possible to
construct a metric on the manifold, the {\em model\/}, which is bi-Lipschitz
equivalent to any hyperbolic metric on the manifold with the given ending
invariants.

%
%
%
%

\section{Minimal surfaces and Heegaard splittings}\label{sec:minimal}
In this section let $M=(M,\rho)$ be a closed Riemannian 3--manifold, not
necessarily hyperbolic. We will however assume that $M$ is irreducible.
Recall that a surface $F\subset M$ is a minimal surface if it is a critical
point for the area functional. More precisely, if $\CH^2_M(F)$ is the area,
or in other words the two dimensional Hausdorff measure of the surface $F$
in $M$, then $F$ is {\em minimal\/} if for every smooth variation $(F_t)_t$
with $F_0=F$ one has
\begin{equation}\label{eq:area-variation}
\frac d{dt}\CH^2_M(S_t)\vert_{t=0}=0.
\end{equation}
The minimal surface $F$ is said to be {\em stable\/} if again for every
smooth variation the second derivative of the area is positive:
\begin{equation}\label{eq:area-variation2}
\frac {d^2}{dt^2}\CH^2_M(S_t)\vert_{t=0}>0.
\end{equation}
From a more intrinsic point of view, it is well-known that a surface $F$ in
$M$ is minimal if and only if its mean curvature vanishes.

Schoen--Yau \cite{Schoen-Yau} and Sacks--Uhlenbeck \cite{Sacks-Uhlenbeck}
proved that every geometrically incompressible surface $S$ in $M$ is
homotopic to a stable minimal surface $F$. Later, Freedman--Hass--Scott
\cite{FHS} proved that in fact $F$ is embedded and hence that, by a result
of Waldhausen \cite{Waldhausen} $S$ is isotopic to a connected component of
the boundary of a regular neighborhood of $F$. Summing up one has the 
following theorem.

\begin{sat}\label{thm:incompressible-minimal}
Let $S$ be a geometrically incompressible surface in $M$. Then there is a
stable minimal surface $F$ such that $S$ is either isotopic to $F$ or to
the boundary $\bnd\CN(F)$ of a regular neighborhood of $F$.
\end{sat}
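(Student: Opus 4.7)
The plan is to realize $S$ by an area-minimizing representative in its homotopy class and then recognize that such a representative is unique up to isotopy, using in turn the three ingredients cited in the statement.

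First, I would invoke the Schoen--Yau and Sacks--Uhlenbeck existence theory. Since $M$ is irreducible and contains no $\BR P^2$, and since $S$ is geometrically incompressible with $\chi(S)\le 0$, any minimizing sequence for area in the homotopy class of $S$ (or, if $S$ is one-sided, in the homotopy class of a map from its orientation double cover) cannot bubble off a sphere or a disk, because the surface is essentially $\pi_1$-injective in the sense needed. The outcome is a stable minimal branched immersion $f\co S_0\to M$ realizing the infimum of areas; standard regularity then promotes $f$ to a smooth minimal immersion.

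Second, I would apply Freedman--Hass--Scott to upgrade the immersion $f$ to an embedding. Their argument shows that any two stable area-minimizing immersions whose images intersect must do so in a controlled way compatible with a covering of a common embedded minimal surface; in particular $f(S_0) = F$ is an embedded stable minimal surface, and $f$ itself is a covering map of $S_0$ onto $F$. Because $M$ is orientable, $F$ is either two-sided, in which case $f$ is a homeomorphism and $S_0$ is homeomorphic to $F$, or $F$ is one-sided and $f$ is the two-to-one orientation double cover $S_0\to F$, in which case $S_0$ is homeomorphic to $\partial\CN(F)$.

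Third, I would appeal to Waldhausen's theorem: two homotopic two-sided incompressible surfaces in an orientable irreducible 3--manifold are isotopic. If $F$ is two-sided, then $S$ and $F$ are homotopic two-sided incompressible surfaces and are therefore isotopic. If $F$ is one-sided, then $S$ is homotopic to the two-sided surface $\partial\CN(F)$ (since both admit the same lift from $S_0$), and Waldhausen again gives that $S$ is isotopic to $\partial\CN(F)$, yielding the second alternative.

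The main obstacle, in my view, is not any single one of these steps individually but tracking the sidedness issue cleanly throughout: the Schoen--Yau minimizer may genuinely be one-sided even when $S$ is two-sided, and the orientation double cover has to be introduced at the right moment so that the Freedman--Hass--Scott embedding conclusion lines up with Waldhausen's isotopy statement, which is naturally formulated for two-sided surfaces. Once one bookkeeps this carefully, the three cited results chain together to give exactly the dichotomy in the conclusion.
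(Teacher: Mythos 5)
Your proposal follows exactly the paper's approach: Schoen--Yau/Sacks--Uhlenbeck for existence of a stable minimal representative, Freedman--Hass--Scott to obtain an embedded minimal surface $F$ (possibly doubly covered), and Waldhausen to convert the homotopy into an isotopy, with the two cases according to whether $F$ is two-sided or one-sided. Your explicit bookkeeping of the sidedness is a welcome elaboration, but the chain of citations and the logic are the same as in the paper.
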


\fullref{thm:incompressible-minimal} concludes the discussion about
existence of minimal surfaces as long as one is only interested into those
surfaces which are geometrically incompressible. We turn now our attention
to surfaces which are geometrically compressible.

Not every compressible surface in $M$ needs to be isotopic to a minimal
surface. In fact, the following beautiful theorem of Lawson
\cite{Lawson-unknotted} asserts that for example every minimal surface $F$
in the round 3--sphere $\BS^3$ is a Heegaard surface.

\begin{sat}\label{thm:minimal in sphere}
Assume that $M$ has positive Ricci-curvature $\Ric(M)>0$ and let $F$ be a
closed embedded minimal surface. Then, $M\setminus F$ consists of one or
two handlebodies.
\end{sat}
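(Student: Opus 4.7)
The plan is to show that each connected component $U$ of $M\setminus F$ deformation retracts onto a finite graph $\Gamma$ such that $\bar U$ is a regular neighborhood of $\bnd U\cup\Gamma$; this makes $\bar U$ a handlebody by definition. The retraction will be produced by running the normal exponential flow from $F$ backwards, and $\Gamma$ will be (homeomorphic to) the cut locus of $F$ inside $\bar U$. The geometric hypotheses $\Ric(M)>0$ and $F$ minimal will together control this cut locus.

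On the side of $F$ containing $U$, fix a smooth unit normal field $N$ along $\bnd U$ and set $\Phi(p,t)=\exp_p(tN(p))$ on $\bnd U\times[0,\infty)$; let $c(p)$ be the cut distance. Along a unit-speed normal geodesic $\gamma(t)=\Phi(p,t)$, the mean curvature $H(t)$ of the equidistant surface $\Phi(\cdot,t)$ at $\gamma(t)$ obeys a Riccati-type equation which, together with Cauchy--Schwarz and a uniform lower bound $\kappa>0$ on $\Ric$, reads
\begin{equation*}
H'(t)=-\|A(t)\|^2-\Ric(\gamma'(t),\gamma'(t))\le-\tfrac12 H(t)^2-\kappa.
\end{equation*}
Since $F$ is minimal, $H(0)=0$, and ODE comparison forces $H(t)\to-\infty$ in time at most $\pi/\sqrt{2\kappa}$. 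Thus $c(p)$ is bounded uniformly in $p$, the cut locus $C\subset U$ is compact, and $\bar U$ deformation retracts onto $\bnd U\cup C$ under the reverse flow of $\Phi$.

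The heart of the proof is then to show that $C$ has the structure of a $1$--complex. Cut points split into two types: focal points, where $d\Phi$ degenerates, and non-focal cut points, where two distinct length-minimizing normal geodesics from $F$ arrive at a common point. A standard first-variation (Frankel-type) argument shows that at a non-focal cut point the two incoming geodesics must be antipodal, so generically that set is $1$--dimensional. For the focal locus, the focusing estimate above, combined with strict positivity of $\Ric$ and vanishing of $H$ at $F$, constrains the first-focal-time function on $\bnd U$ enough to force the focal set to be generically $1$--dimensional as well. This is the main obstacle of the argument: making the word "generically" precise, and ruling out a $2$--dimensional piece of $C$, requires a careful analysis of the Jacobi fields along normal geodesics issuing from $F$, using minimality of $F$ in an essential way to control how normal geodesics from nearby points first come together.

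Once $C$ is realized as a finite graph $\Gamma\subset U$ and $\bar U$ is identified with a regular neighborhood of $\bnd U\cup\Gamma$, $\bar U$ is a handlebody. In particular, if $F$ separates $M$ then both complementary closures are handlebodies and $F$ is a Heegaard surface; if $F$ is one-sided, then $M\setminus F$ is a single handlebody, giving the second alternative of the theorem.
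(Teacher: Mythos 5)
The paper does not prove this statement: it is quoted as Lawson's theorem from \cite{Lawson-unknotted} with no argument, so there is no in-paper proof to compare against. Your proposal can still be assessed on its own, and it has a gap that I do not think can be repaired, precisely at the step you flag as the main obstacle.

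In a $3$--manifold the cut locus of a hypersurface is generically a $2$--complex, not a graph, and the hypotheses $\Ric(M)>0$ and $H(0)=0$ do nothing to change this. At a cut point $p$ where two distinct minimizing normal geodesics from $F$ arrive with unit velocities $v_1\neq v_2$, the difference of the two local distance functions has gradient $v_1-v_2\neq 0$ at $p$, so its zero set --- which is exactly the cut locus near $p$ --- is a smooth surface. Antipodality does not help: if $v_1=-v_2$ then $v_1-v_2=2v_1\neq 0$ and the stratum is still two--dimensional; the cut locus of two parallel planes in $\BR^3$ is a plane, with antipodal incoming normals. Already for the boundary of a smooth convex body in $\BR^3$ (say an ellipsoid) the medial axis is a $2$--dimensional disk, and one expects the same behavior for small symmetric perturbations of the round metric on $\BS^3$ with $F$ the equatorial $\BS^2$. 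Moreover the approach is essentially circular: since $\bar U$ deformation retracts onto the cut locus $C$, proving that $C$ has the homotopy type of a graph amounts to proving that $\pi_1(\bar U)$ is free and $H_2(\bar U)=0$, which (given irreducibility) is the content of the theorem. Your Riccati inequality $H'\le -\frac12 H^2-\kappa$ is correct and is the right first observation, but its standard use is to show instability, not to bound the cut locus. One route: push $F$ slightly into $U$ and minimize area in its isotopy class inside $\bar U$, whose boundary is minimal and hence mean--convex; if the infimum were positive, \fullref{MSY} would produce a stable closed two--sided minimal surface in a $\Ric>0$ manifold, impossible by exactly the computation you wrote down (take $\phi\equiv 1$ in the second variation); hence the infimum is zero and $F$ bounds a handlebody. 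An alternative route uses a Frankel second--variation argument on a shortest arc in $\bar U$ meeting $F$ orthogonally at both endpoints (parallel transport along such an arc carries $T_{\gamma(0)}F$ to $T_{\gamma(L)}F$, so the boundary terms vanish by minimality of $F$, and $\Ric>0$ makes the second variation negative) to show $\pi_1(F)\to\pi_1(\bar U)$ is onto, so that $\bar U$ is a compression body with connected boundary, that is, a handlebody. Either way the proof is global and variational rather than based on the local structure of the cut locus.
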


However, there is a way to associate to every surface in $M$ a (possibly
empty) minimal surface. The idea is to consider the set $\FI(S)$ of all
possible surfaces in $M$ isotopic to $S$ and try to minimize area. If $S_i$
is a sequence in $\FI(S)$ such that
$$\lim_i\CH^2_M(S_i)=\inf\{\CH^2_M(S')\vert S'\in\FI(S)\}$$
then one can try to extract a limit of the surfaces $S_i$ hoping that it
will be a minimal surface. However, it is unclear which topology should one
consider. The usual approach is to consider $S_i$ as a varifold. A {\em
varifold\/} is a Radon measure on the Grassmannian $G^2(M)$ of
two-dimensional planes in $TM$. For all $i$, the inclusion of the surface
$S_i$ lifts to an inclusion
$$S_i\to G^2(M)$$
obtained by sending $x\in S_i$ to the plane $T_xS_i\in G^2(M)$. We obtain
now a measure on $G^2(M)$ by pushing forward the Hausdorff measure, ie
the area, of $S_i$. Observe that the total measure of the obtained varifold
coincides with the area $\CH_M^2(S_i)$ of $S_i$. In particular, the
sequence $(S_i)$, having area uniformly bounded from above, has a
convergent subsequence, say the whole sequence, in the space of varifolds.
Let $F=\lim_iS_i$ be its limit. The hope now is that $F$ is a varifold
induced by an embedded minimal surface. Again in the language of varifolds,
$F$ is a so-called {\em stationary varifold\/} and Allard's \cite{Allard}
regularity theory asserts that it is induced by a countable collection of
minimal surfaces. In fact, using the approach that we just sketched,
Meeks--Simon--Yau proved the following theorem.

\begin{sat}[Meeks--Simon--Yau \cite{MSY}]\label{MSY}
Let $S$ an embedded surface in $M$ and assume that
$$\inf\{\CH^2(S')\vert S'\in\FI(S)\}>0.$$
Then there is a minimizing sequence in $\FI(S)$ converging to a varifold
$V$, a properly embedded minimal surface $F$ in $M$ with components
$F_1,\dots,F_k$ and a collection of positive integers $m_1,\dots,m_k$ such
that $V=\sum m_iF_i$.
\end{sat}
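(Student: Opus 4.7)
The plan is to implement the direct method of the calculus of variations in the space of varifolds, with the crucial refinement that one must stay within (the closure of) the isotopy class $\FI(S)$, and to invoke the full Meeks--Simon--Yau $\gamma$--reduction to deal with the fact that surfaces can be geometrically compressible.

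First, I would pick any area-minimizing sequence $S_i\in\FI(S)$, that is a sequence with $\CH^2_M(S_i)$ tending to the infimum. Viewing each $S_i$ as a varifold on the Grassmannian bundle $G^2(M)$, by pushing forward its Hausdorff measure under the Gauss-type map $x\mapsto T_xS_i$, the uniform upper bound on total mass allows one to extract, by Radon compactness, a weakly convergent subsequence with limit some Radon measure $V$ on $G^2(M)$. Standard compactness and integrality results for varifolds then say that $V$ is integral and rectifiable, and stationarity follows from the asymptotic minimality of the $S_i$ against isotopic competitors, since any smooth compactly supported vector field integrates to an ambient isotopy and hence produces a surface in $\FI(S)$. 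Allard's regularity theorem then identifies $V$, off a lower-dimensional singular set, with a finite union of smooth properly embedded minimal surfaces $F_1,\dots,F_k$ carrying positive integer multiplicities $m_1,\dots,m_k$.

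The heart of the matter, and the real contribution of Meeks--Simon--Yau, is to upgrade this \emph{a priori} partial regularity statement to the full conclusion that $F=F_1\cup\dots\cup F_k$ is a genuinely embedded minimal surface in $M$ and that $V=\sum m_iF_i$ globally, with no singular set and no stray density. The technical tool is a $\gamma$--reduction procedure: in each sufficiently small ball $B\subset M$ one replaces $S_i\cap B$ by a surface of least area among the surfaces in $B$ isotopic to $S_i\cap B$ relative to boundary, and an induction on the complexity of the replacements produces a refined minimizing sequence $\tilde S_i\in\FI(S)$ whose varifold limit, together with Allard regularity, yields the desired structural description.

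I expect the main obstacle to be exactly the compatibility of these local replacements with the \emph{global} isotopy class of $S$ in $M$. A modification inside $B$ can a priori fail to extend to an ambient isotopy of $M$, or it can alter the global isotopy type, or introduce new self-intersections outside $B$. This is precisely where the hypotheses $\dim M=3$ and the irreducibility of $M$ are essential: irreducibility lets one cap off the spheres produced by cut-and-paste without leaving $\FI(S)$, and the codimension-one nature of surfaces in a $3$--manifold allows each local replacement to be realized by an ambient isotopy via Schoenflies and the uniqueness of regular neighborhoods. Once this extension step is settled, any hypothetical tangential self-touching of $V$ at an interior point would produce, by a further local cut-and-paste, an isotopic competitor of strictly smaller area, contradicting the minimality of $\tilde S_i$; this forces the singular set of $V$ to be empty and $V$ to equal $\sum m_iF_i$ with $F_i$ properly embedded and $m_i\in\mathbb{Z}_{>0}$, as claimed.
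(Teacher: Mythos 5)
This theorem is not proved in the paper; it is imported from Meeks--Simon--Yau \cite{MSY}. The paragraph that precedes the statement only gives the informal picture (minimizing sequence, varifold limit, Allard regularity) as motivation, and the remark that follows records the additional structural fact that the minimal surface $F$ arises from $S$ by surgery, which is what the survey actually uses later via \fullref{strongly}. So there is no proof in the text to compare against line by line; what one can do is assess whether your sketch would yield the MSY theorem.

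Your reconstruction follows the broad MSY strategy, but the logical order in your first paragraph is wrong, and this is a genuine gap. For an arbitrary area-minimizing sequence $(S_i)$ in $\FI(S)$ you control the total mass, but you have no bound on the first variation, so Allard's compactness theorem does not apply; the varifold limit $V$ of such a sequence need not be rectifiable, let alone integral, and there is then no Allard regularity to invoke. Rectifiability, integer density and the smooth structure of $V$ are \emph{conclusions} that Meeks--Simon--Yau extract only after first improving the minimizing sequence. The local replacement and $\gamma$--reduction scheme you relegate to your third paragraph is not a supplementary refinement bolted on afterwards; it must come first, producing a better minimizing sequence (still in $\FI(S)$, still minimizing) whose limit can then be shown to be stationary, rectifiable, of integer multiplicity and regular. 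Your stationarity argument via compactly supported vector fields and your final paragraph on extending local cut-and-paste to ambient isotopies and using irreducibility to cap off spheres are both sound and do identify the real topological crux, but they cannot rescue an argument that begins by asserting, as if automatic, the very structure of the limit that the whole replacement machinery is designed to establish. You also never actually derive the surgery relation between $F$ and $S$, which is the part of MSY the paper exploits.
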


\fullref{MSY} applies also if $S$ is a properly embedded surface in a
manifold with mean-convex, for instance minimal, boundary.

\fullref{thm:minimal in sphere} implies that the minimal surface $F$
provided by \fullref{MSY} is, in general, not isotopic to the surface $S$
we started with. Moreover, since the notion of convergence is quite weak,
it seems hopeless to try to relate the topology of both surfaces. However,
Meeks--Simon--Yau \cite{MSY} show, during the proof of \fullref{MSY}, that
$F$ arises from $S$ through surgery.

\begin{bem}
Meeks--Simon--Yau say that the minimal surface $F$ arises from $S$ by
$\gamma$--{\em convergence\/} but this is exactly what we call surgery.
\end{bem}

\fullref{MSY}, being beautiful as it is, can unfortunately not be used if
$S$ is a Heegaard surface. Namely, if $S$ is a Heegaard surface in $M$ then
there is a sequence of surfaces $(S_i)$ isotopic to $S$ such that
$\lim_i\CH^2_M(S_i)=0$. For the sake of comparison, every simple closed
curve in the round sphere $\BS^2$ is isotopic to curves with arbitrarily
short length. The comparison with curves in the sphere is not as
far-fetched as it may seem. From this point of view, searching from minimal
surfaces amounts to prove that the sphere has a closed geodesic for every
Riemannian metric. That this is the case is an old result due to Birkhoff.

\begin{sat}[Birkhoff]\label{Birkhoff}
If $\rho$ is a Riemannian metric on $\BS^2$, then there is a closed
non-constant geodesic in $(\BS^2,\rho)$.
\end{sat}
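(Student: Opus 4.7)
The plan is to prove \fullref{Birkhoff} by a one-parameter min-max argument on the space of closed curves in $\BS^2$, in direct analogy with the sweepout arguments used above to produce minimal surfaces. Consider the space of \emph{sweepouts}: continuous families $\{\gamma_t\}_{t\in[0,1]}$ of piecewise smooth closed curves in $\BS^2$ such that $\gamma_0$ and $\gamma_1$ are constant curves and the associated map $\Phi\co\BS^1\times[0,1]\to\BS^2$ (collapsing $\BS^1\times\{0,1\}$ to points) has degree one. A concrete example is the sweepout by lines of latitude after choosing any diffeomorphism between $\BS^2$ and the standard round sphere. Define the \emph{width}
$$W(\rho)=\inf_{\{\gamma_t\}}\max_{t\in[0,1]} L_\rho(\gamma_t),$$
where the infimum ranges over all sweepouts and $L_\rho$ denotes $\rho$-length.

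The first key step is to show that $W(\rho)>0$. Fix a convexity radius $r_0>0$ for $(\BS^2,\rho)$ and suppose we had a sweepout $\{\gamma_t\}$ with $L_\rho(\gamma_t)<r_0$ for every $t$. Choose basepoints $p_t\in\gamma_t$ continuously in $t$; since each $\gamma_t$ has diameter at most $L_\rho(\gamma_t)/2<r_0/2$, it is contained in the geodesically convex ball $B(p_t,r_0/2)$. Contracting each $\gamma_t$ to $p_t$ along straight-line geodesic homotopies inside these balls yields a homotopy of $\Phi$ through sweepouts to a map that factors through the interval $[0,1]$, and hence through $\BS^1$ after collapsing endpoints; such a map has degree zero, contradicting $\deg\Phi=1$. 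Hence $W(\rho)\geq r_0>0$.

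The second step is to extract a non-constant closed geodesic of length $W(\rho)$ via Birkhoff's curve-shortening procedure. For curves of length bounded by a fixed constant, choose a partition of $\BS^1$ fine enough that the image of each subinterval lies in a convex ball, replace $\gamma$ on each subinterval by the unique minimizing geodesic joining its endpoints, and then iterate with a shifted partition. The resulting operator $\Psi$ on the space of curves is continuous, does not increase length, and its only fixed points among curves of length bounded away from zero are the closed geodesics. Applying $\Psi$ uniformly in $t$ to a minimizing sequence of sweepouts $\{\gamma_t^n\}$ with $\max_t L_\rho(\gamma_t^n)\to W(\rho)$ and selecting $t_n$ so that $L_\rho(\gamma_{t_n}^n)\to W(\rho)$, an Arzel\`a--Ascoli argument (using arclength parametrizations and the uniform length bound) produces a subsequential limit $\gamma_\infty$ of length $W(\rho)\geq r_0$, which is fixed by $\Psi$ and therefore a closed non-constant geodesic.

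The hard part is, as is characteristic of every min-max scheme, ensuring that the extracted limit actually realizes the width rather than escaping to a degenerate configuration. The positivity $W(\rho)\geq r_0$ prevents collapse to a point, but one must implement Birkhoff's shortening in a $t$-continuous manner so that it genuinely produces new sweepouts, and combine it with a pull-tight argument to guarantee that sequences whose lengths approach the width subconverge rather than drift apart. This is the finite-dimensional shadow of the delicate compactness and regularity issues addressed by Colding--De Lellis \cite{Colding-deLellis} in the setting of minimal surfaces discussed earlier in this section.
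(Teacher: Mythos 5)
Your proposal follows essentially the same route as the paper: a one-parameter min-max over sweepouts $\BS^1\times[0,1]\to\BS^2$ with constant endpoints, a lower bound on the width to rule out degeneration (the paper invokes the injectivity radius, you the convexity radius, which is the cleaner quantity for the contraction argument), and extraction of a closed geodesic from a minimax sequence. The only presentational difference is that you make the extraction step explicit via Birkhoff's curve-shortening process, whereas the paper's sketch leaves that part as "one proves."
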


The idea of the proof of Birkhoff's theorem is as follows. Fix $\rho$ a
Riemannian metric on $\BS^2$ and let
$$f \co \BS^1\times[0,1]\to\BS^2, \qua f(\theta,t)=f_t(\theta)$$
be a smooth map with $f_0$ and $f_1$ constant and such that $f$ represents
a non-trivial element in $\pi_2(\BS^2)$. For any $g$ homotopic to $f$ let
$$E(g)=\max\{l_\rho(g_t)\vert t\in[0,1]\}$$
be the length of the longest of the curves $g_t$. Observe that since $g$ is
not homotopically trivial $E(g)$ is bounded from below by the injecvity
radius of $(\BS^2,\rho)$. Choose then a sequence $(g^i)$ for maps homotopic
to $f$ such that
$$\lim_i E(g^i)=\inf\{E(g)\vert g \text{ homotopic to } f\}.$$
One proves that there is a {\em minimax\/} sequence $(t^i)$ with
$t^i\in[0,1]$ such that $E(g_i)=l_\rho(g^i_{t^i})$ and such that the curves
$g^i_{t^i}$ converge, when parametrized by arc-length to a non-constant
geodesic in $(\BS^2,\rho)$.

The strategy of the proof of Birkhoff's theorem was used in the late 70s
by Pitts \cite{Pitts} who proved that every closed $n$--manifold with $n\le
6$ contains an embedded minimal submanifold of codimension 1 (see also
Schoen--Simon \cite{Schoen-Simon} for $n=7$). We describe briefly his proof in the
setting of 3--manifolds. The starting point is to consider a Heegaard
surface $S$ in $M$. By definition, the surface $S$ divides $M$ into two
handlebodies. In particular, there is a map
\begin{equation}\label{eq:sweep-out}
f \co (S\times[0,1],S\times\{0,1\})\to (M,f(S\times\{0,1\})),\qua f(x,t)=f^t(x)
\end{equation}
with positive relative degree, such that for $t\in(0,1)$ the map $f^t \co S\to
M$ is an embedding isotopic to the original embedding $S\hookrightarrow M$
and such that $f^0(S)$ and $f^1(S)$ are graphs. Such a map as in
\eqref{eq:sweep-out} is said to be a {\em sweep-out\/} of $M$. Given a
sweep-out $f$ one considers $E(f)$ to be the maximal area of the surfaces
$f^t(S)$. Pitts proves that there is a minimizing sequences $(f_i)$ of
sweep-outs and an associated {\em minimax\/} sequence $t^i$ with
$E(f^i)=\CH^2_M(f_{t^i}(S))$ and such that the surfaces $f_{t^i}(S)$
converge as varifolds to an embedded minimal surface $F$; perhaps with
multiplicity.

\begin{sat}[Pitts]\label{Pitts}
Every closed Riemannian 3--manifold contains an embedded minimal surface.
\end{sat}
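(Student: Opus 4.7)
The plan is to imitate Birkhoff's minimax for closed geodesics on $\BS^2$, with sweep-outs by Heegaard surfaces taking the place of loop sweep-outs of the 2--sphere. Fix a Heegaard splitting $M=U\cup V$, which exists by Moise, and let $S=\bnd_e U$. Collapsing $U$ and $V$ onto their spines $\Gamma_U,\Gamma_V$ gives a sweep-out $f \co (S\times[0,1],S\times\{0,1\})\to(M,\Gamma_U\cup\Gamma_V)$ as in \eqref{eq:sweep-out}, of positive relative degree. Let $\Pi$ denote the set of all sweep-outs homotopic to $f$ through sweep-outs, set $E(g)=\sup_t\CH_M^2(g^t(S))$, and define the width
\[
W=\inf_{g\in\Pi}E(g).
\]

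The first substantive step is to show $W>0$. Since every $g\in\Pi$ has positive relative degree and $g^0(S)\cup g^1(S)$ is a graph of measure zero, the coarea formula gives
\[
\int_0^1\CH^2_M(g^t(S))\,dt\ \geq\ \vol(M,\rho),
\]
so $W\geq\vol(M,\rho)>0$. This is the analogue of Birkhoff's observation that the width of his loop sweep-outs is bounded below by the injectivity radius. Next, pick a minimizing sequence $f_i\in\Pi$ with $E(f_i)\to W$, together with a minimax sequence of times $t_i\in[0,1]$ such that
\[
\CH^2_M(f_i^{t_i}(S))\ \xrightarrow[i\to\infty]{}\ W.
\]
Writing $\Sigma_i=f_i^{t_i}(S)$ and pushing forward $\CH^2_M\vert_{\Sigma_i}$ to the Grassmannian bundle $G^2(M)$ of tangent 2--planes, we obtain a sequence of varifolds of uniformly bounded mass. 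By Banach--Alaoglu, some subsequence converges as varifolds to a limit $V$ of mass $W$.

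The key analytic step is to show that $V$ can be arranged to be \emph{stationary}, that is, a critical point of the mass functional on varifolds. The strategy, due to Pitts, is a pull-tight argument: if $V$ were not stationary then some compactly supported smooth vector field $X$ on $M$ would have flow $\phi_s$ strictly decreasing the mass of $V$ in a definite way, and applying a suitable $t$--dependent version of $\phi_s$ to $f_i$ would yield sweep-outs $\tilde f_i\in\Pi$ with $E(\tilde f_i)<W-\epsilon$ for large $i$, contradicting the definition of $W$. One must be careful that the deformation is localized near the maximum slice, which is the technical content of the pull-tight. Moreover, by further replacing $f_i$ within $\Pi$ one can demand that the slices $\Sigma_i$ be \emph{almost minimizing} in the Pitts sense: on every sufficiently small geodesic ball one cannot reduce area by any small isotopy without first increasing it substantially.

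The main obstacle, and the deepest part of the proof, is regularity: a stationary integral varifold in a 3--manifold a priori only satisfies Allard's regularity on its regular part, and there is no reason that $V$ could not consist of a tangle of singular sheets. This is precisely where the almost-minimizing property is used. On each small ball, $V$ is a varifold limit of surfaces that are nearly area-minimizing in that ball, so one can substitute $\Sigma_i$ on the ball by genuine minimizers, apply the Meeks--Simon--Yau regularity for stable minimal surfaces (\fullref{MSY}) combined with the Schoen--Simon curvature estimates to control the replacements, and pass to the limit to conclude that $V$ is supported on a smooth properly embedded minimal surface $F\subset M$ with positive integer multiplicities on its components. This yields the desired embedded minimal surface, and one further notes that the genus of $F$ (ignoring multiplicity) is bounded above by that of the Heegaard surface $S$, which will matter for the applications in \fullref{sec:genus}.
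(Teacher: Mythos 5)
Your strategy is exactly the one the paper sketches: Birkhoff-style minimax over sweep-outs by Heegaard surfaces, a pull-tight step to produce a stationary varifold, and regularity obtained from the almost-minimizing property together with the replacement arguments using Meeks--Simon--Yau (\fullref{MSY}) and Schoen--Simon estimates. This is the Colding--de~Lellis route that the paper explicitly endorses as the readable version of Pitts' argument, so on the level of approach you are in complete agreement with the paper.

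There is, however, one step that is wrong as written. You assert that positive relative degree of $g$ plus the coarea formula yield $\int_0^1 \CH^2_M(g^t(S))\,dt \geq \vol(M,\rho)$, and hence $W \geq \vol(M,\rho)$. This inequality is false for an arbitrary sweep-out. What positive degree actually gives, via the area formula, is $d\cdot\vol(M,\rho) = \int_{S\times[0,1]} \lvert\Jac g\rvert$ where $d\ge 1$ is the degree; and since at each $(x,t)$ one has $\lvert\Jac g\rvert \le \lvert\Jac(g^t)\rvert\cdot\lvert\partial_t g\rvert$, one only deduces your inequality if the normal speed $\lvert\partial_t g\rvert$ is bounded by $1$. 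Nothing in the definition of $\Pi$ imposes such a normalization, and indeed one may reparametrize any sweep-out in $t$ so that $\int_0^1 \CH^2_M(g^t(S))\,dt$ becomes as small as one wishes while $E(g)$, and hence $W$, is unchanged. So $W$ is certainly not bounded below by $\vol(M,\rho)$, and the integral of the slice areas is not the right quantity to look at. The standard way to prove $W>0$ in this setting is different: each slice $g^t(S)$ separates $M$, and by continuity there is a $t_0$ at which both sides have volume at least $\vol(M,\rho)/2$; the isoperimetric inequality on the closed Riemannian manifold $(M,\rho)$ then gives a positive lower bound for $\CH^2_M(g^{t_0}(S))$ depending only on $(M,\rho)$, and this bound is the width lower bound you want. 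Replacing your coarea paragraph with this isoperimetric argument repairs the proof; the remaining steps (pull-tight, almost minimizing, replacement regularity, and the genus bound via Heegaard surfaces) all match the paper's intended line of reasoning.
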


Pitts' proof is, at least for non-experts like the author of this note,
difficult to read. However, there is an amazingly readable proof due to
Colding--de Lellis \cite{Colding-deLellis}. In fact, the main technical
difficulties can be by-passed, and this is what these authors do, by using
Meeks--Simon--Yau's \fullref{MSY}. In fact, as it is the case with the
Meeks--Simon--Yau theorem, \fullref{Pitts} remains true for compact
3--manifolds with mean-convex boundary.

\fullref{Pitts} settles the question of existence of minimal surfaces in
3--manifolds. Unfortunately, it does not say anything about the relation
between the Heegaard surface $S$ we started with and the obtained minimal
surface $F$. In fact, Colding--de Lellis announce in their paper that in a
following paper they are going to prove that the genus does not increase.
The concept of convergence of varifolds is so weak that  this could well
happen. However, in the early 80s, Pitts and Rubinstein affirmed something
much stronger: they claimed that $F$ is not stable and arises from $S$ by
surgery. This was of the greatest importance in the particular case that
the Heegaard surface $S$ is assumed to be strongly irreducible.

By \fullref{strongly}, the assumption that the Heegaard surface $S$ is
strongly irreducible implies that every surface $S'$ which arises from $S$
by surgery is either isotopic to $S$ or of one of the following two kinds:
\begin{itemize}
\item[(A)] Either $S'$ is obtained from $S$ by suturing along disks which
are all at the same side, or
\item[(B)] $S$ is isotopic to the surface obtained from the boundary of a
regular neighborhood of $S'$ by attaching a vertical handle.
\end{itemize}
In particular, if in the setting of Pitts' theorem we assume that $S$ is
strongly irreducible we obtain that this alternative holds for the minimal
surface $F$. In fact, more can be said. If we are in case (A) then $F$
bounds a handlebody $H$ in $M$ such that the surface $S$ is isotopic to a
strongly irreducible Heegaard surface in the manifold with boundary
$M\setminus H$. The boundary of $M\setminus H$ is minimal an
incompressible. In particular, $F=\bnd M\setminus H$ is isotopic to some
stable minimal surface $F'$ in $M\setminus H$ parallel to $\bnd M$. Observe
that $F\neq F'$ because one of them is stable and the other isn't. The
stable minimal surface $F'$ bounds in $M$ some submanifold $M_1$ isotopic
in $M$ to $M\setminus H$, in particular the original Heegaard surface $S$
induces a Heegaard splitting of $M_1$. The boundary of $M_1$ is minimal and
hence mean-convex. In particular, the method of proof of \fullref{Pitts}
applies and yields an unstable minimal surface $F_1$ in $M_1$ obtained from
$S$ by surgery. Again we are either in case (B) above, or $F_1$ is isotopic
to $S$ within $M_1$ and hence within $M$, or we can repeat this process.

\begin{figure}[ht!]
\begin{center}
\includegraphics[scale=0.4,angle=0]{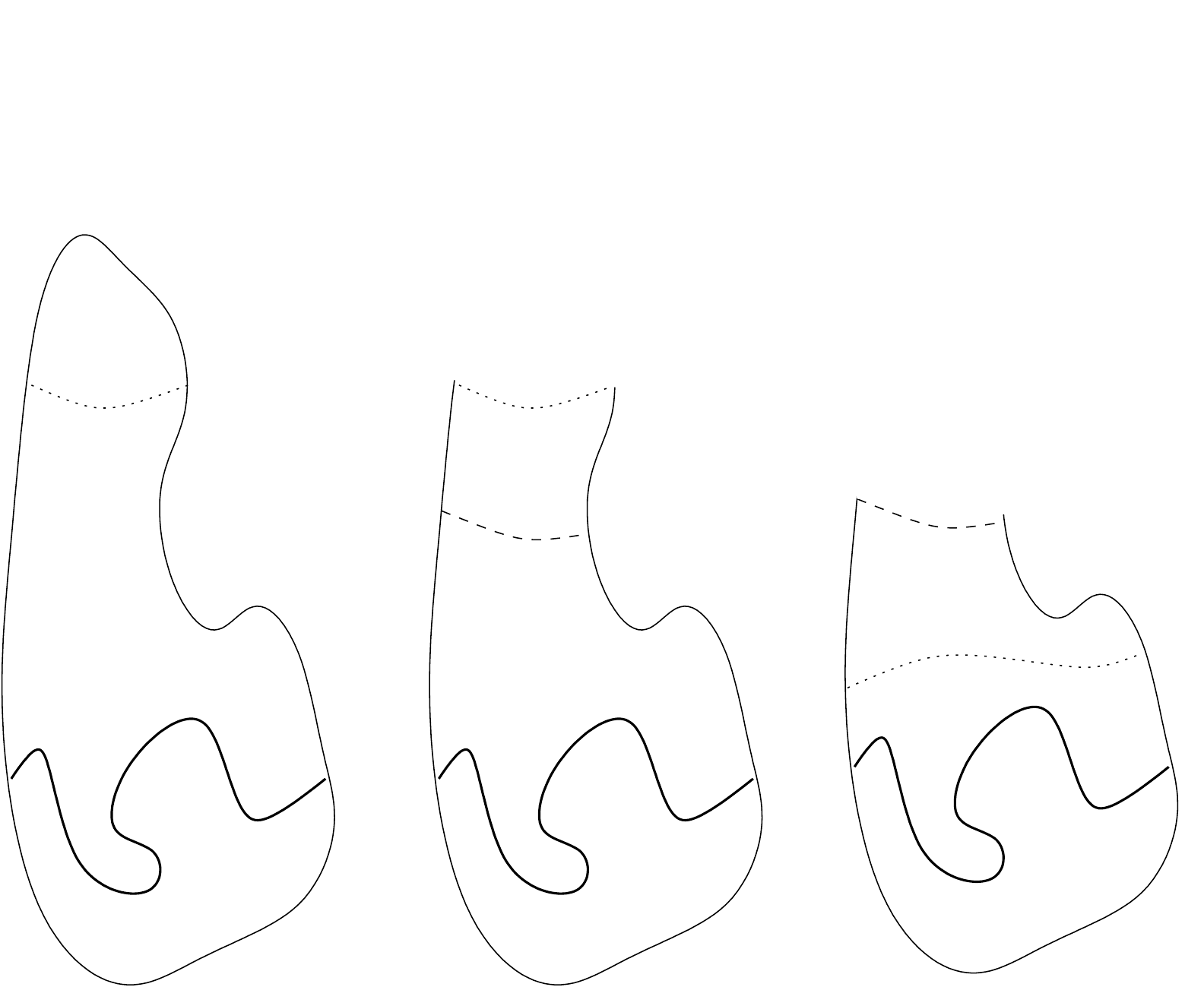}
\end{center}
\caption{Proof of \fullref{Pitts-Rubinstein}: The thick line is the
original surface; the short dotted line is the first minimax surface; the
dashed line is the least area surface obtained from the first minimax
surface; the long dotted line is the second minimax surface.}
\label{fig:3}
\end{figure}

If this process goes for ever, we obtain a sequences of disjoint embedded
minimal surfaces in $M$ with genus less than that of $S$. This means that
the metric in $M$ is not {\em bumpy\/}. However, if $M$ is not bumpy, then we
can use a result of White \cite{White-bumpy} and perturb it slightly so
that it becomes bumpy. It follows from the above that for any such
perturbation the process ends and we obtain a minimal surface which is
either as in (B) or actually isotopic to the Heegaard surface $S$. Taking a
sequence of smaller and smaller perturbations and passing to a limit we
obtain a minimal surface $F$ in $M$, with respect to the original metric,
which is either isotopic to $S$ or such that we are in case (B) above. In
other words we have the following theorem.

\begin{sat}[Pitts-Rubinstein]\label{Pitts-Rubinstein}
If $S$ is a strongly irreducible Heegaard surface in a closed 3--manifold
then there is a minimal surface $F$ such that $S$ is either isotopic to $F$
or to the surface obtained from the boundary of a regular neighborhood of
$F$ by attaching a vertical 1--handle.
\end{sat}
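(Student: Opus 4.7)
The plan is to implement the strategy sketched in the paragraphs immediately preceding the statement, using min--max theory together with the rigidity of strongly irreducible splittings. The starting point is a sweep-out
$$f\co (S\times[0,1],S\times\{0,1\})\to(M,f(S\times\{0,1\}))$$
of $M$ by surfaces isotopic to $S$ with $f^0(S),f^1(S)$ graphs, as in \eqref{eq:sweep-out}. Running Pitts' min--max procedure (\fullref{Pitts}, as reorganized by Colding--de Lellis) on this sweep-out produces a minimax sequence whose slices converge as varifolds to an embedded minimal surface $F$. The key extra input, due to Pitts--Rubinstein, is that $F$ can be taken to be obtained from $S$ by surgery in the sense used in \fullref{MSY}; I would take this surgery statement as given.

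Once $F$ is in hand, I would feed it into \fullref{strongly}. Since $F$ has no parallel components (discard any), \fullref{strongly} yields three alternatives. If $F$ is isotopic to $S$ we are done. If we are in case (2) of \fullref{strongly}, then some single disk surgery $\hat S$ of $S$ bounds a regular neighborhood of $F$, which is precisely the conclusion ``$S$ is isotopic to the boundary of a regular neighborhood of $F$ with a vertical $1$--handle attached''. The remaining case (3) is where the iteration happens: $F$ is separating, disjoint (up to isotopy) from $S$, and bounds a compression body $H$ on one side, with $S$ a strongly irreducible Heegaard surface of the complement $M\setminus H$. In particular $\bnd(M\setminus H)=F$ is minimal, hence mean-convex. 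The version of \fullref{Pitts} for mean-convex boundary (valid, as noted, in the Colding--de Lellis treatment) applies to $M\setminus H$ together with the induced sweep-out, and produces a new minimal surface $F_1$ obtained from $S$ by surgery inside $M\setminus H$. Now rerun \fullref{strongly} on $F_1$; either we land in case (1) or (2) and finish, or we are again in case (3) and obtain a properly nested sequence of minimal surfaces inside $M$ whose genera are bounded by $g(S)$.

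The main obstacle is controlling this iteration. A priori nothing prevents it from running forever, producing infinitely many disjoint embedded minimal surfaces of bounded genus, which is possible for non-generic metrics. Here I would invoke White's bumpy metric theorem \cite{White-bumpy}: the set of Riemannian metrics on $M$ for which every closed embedded minimal surface of bounded genus is nondegenerate (``bumpy'') is residual. For a bumpy metric, no such infinite disjoint family can exist, so the iteration must terminate, at each stage, in case (1) or (2) of \fullref{strongly}, giving a minimal surface $F$ of the desired type. To return to the given metric $\rho$, I would approximate $\rho$ by a sequence $\rho_n$ of bumpy metrics and apply the construction to each $\rho_n$, obtaining minimal surfaces $F_n$ (of the two allowed topological types, and of genus at most $g(S)$) in $(M,\rho_n)$. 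Standard compactness for minimal surfaces of bounded genus and bounded area (the area is controlled by the sweep-out energy, which is uniformly bounded since $\rho_n\to\rho$) then yields a subsequential limit $F$ that is a minimal surface for $\rho$; because the two allowed topological conclusions are closed conditions under such convergence, $F$ still satisfies the conclusion of the theorem. The delicate steps where I would need to be careful are (a) verifying that the surgery statement for the min--max limit is preserved under the mean-convex iteration, and (b) ensuring that the limit $F$ does not degenerate (multiplicity, collapse of handles); both can be handled by the standard regularity theory used in \fullref{MSY} and \fullref{Pitts} together with the strong irreducibility of $S$ in each stage.
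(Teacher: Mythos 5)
Your proposal follows the same overall structure as the paper's sketch: min--max to produce an unstable minimal surface obtained from $S$ by surgery, the trichotomy of \fullref{strongly}, an iterated application of mean-convex min--max in the non-terminal case, White's bumpy metric theorem to force termination, and a limit over bumpy perturbations of the original metric. However, there is one step from the paper's argument that you omit, and it is precisely the step that makes the iteration legitimate.

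In the non-terminal case (your case~(3)) you propose to apply mean-convex min--max directly to $M\setminus H$, whose boundary is the min--max surface $F$ just produced, and you claim this yields a ``properly nested'' sequence. That claim is unjustified: $F$ itself arises from $S$ by surgery inside $M\setminus H$ (compress $S$ completely into the compression-body side of the Heegaard splitting of $M\setminus H$), and $F$, being a min--max surface, is unstable rather than stable. So nothing you have said rules out that the min--max in $M\setminus H$ simply returns $F$, in which case the iteration does not progress and you cannot conclude non-bumpiness. The paper's sketch inserts an extra step precisely here: since $F=\bnd(M\setminus H)$ is incompressible in $M\setminus H$, \fullref{thm:incompressible-minimal} (applied to this mean-convex manifold) produces a \emph{stable} minimal surface $F'$ isotopic to $F$ in $M\setminus H$; because $F$ is unstable and $F'$ is stable, $F\neq F'$, so $F'$ lies in the interior. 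One then runs mean-convex min--max in the region $M_1$ bounded by $F'$, and the resulting unstable $F_1$ is automatically distinct from the stable boundary $F'$, hence interior to $M_1$. It is this alternation between unstable min--max surfaces and stable area-minimizers parallel to them that forces the collection $F,F',F_1,F_1',\dots$ to be strictly nested and pairwise disjoint, so that non-termination produces infinitely many disjoint embedded minimal surfaces of bounded genus and hence a non-bumpy metric. You should add this stable-barrier step before the re-application of min--max; as written, the termination argument does not go through.
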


Unfortunately, Pitts and Rubinstein never wrote the proof of
\fullref{Pitts-Rubinstein} above and it seems unlikely that they are ever
going to do so. The most precise version known to the author is a sketch of
the proof due to Rubinstein \cite{Rubinstein}. This lack of written proof
has made doubtful if one could use \fullref{Pitts-Rubinstein} safely or
not. However, all that is left is to prove that the minimal surface
provided in the proof of Pitts' \fullref{Pitts} is unstable and obtained
from $S$ by surgery. The author of this note has written a proof
\cite{Juan-book} and is working on a longer text, perhaps a book,
explaining it and some applications of the Pitts--Rubinstein theorem.

Before concluding this section we should remember that one can combine
\fullref{all-generalized} and \fullref{Pitts-Rubinstein} as follows: Given
a Heegaard surface we first destabilize as far as possible, then we obtain
using \fullref{all-generalized} a generalized Heegaard surface
$(\Sigma_I,\Sigma_H)$. The surface $\Sigma_I$ is incompressible and hence
can be made minimal by \fullref{thm:incompressible-minimal}; now the
surface $\Sigma_H$ can be made minimal using \fullref{Pitts-Rubinstein}.

\section{Using geometric means to determine the Heegaard
genus}\label{sec:genus}
In this section we will show how minimal surfaces can be used to compute
the Heegaard genus of some manifolds. Most, if not all, of the results we
discuss here can be proved using purely topological arguments but, in the
opinion of the author, the geometric proofs are beautiful.

We start considering the mapping torus $M_\phi$ of a pseudo-Anosov mapping
class $\phi\in\Map(\Sigma_g)$ on a closed surface of genus $g$; compare
with \eqref{eq:map-torus}. It is well-known that $M_\phi$ admits a weakly
reducible Heegaard splitting of genus $2g+1$. In particular we have the
following bound for the Heegaard genus
$$g(M_\phi)\le 2g+1.$$
There are manifolds which admit different descriptions as a mapping torus.
In particular, we cannot expect that equality always holds. However,
equality is to be expected if monodromy map $\phi$ is complicated enough.

\begin{sat}\label{genus-mapping1}
Let $\Sigma_g$ be a closed surface of genus $g$ and $\phi\in\Map(\Sigma_g)$
a pseudo-Anosov mapping class. Then there is $n_\phi>0$ such that for all
$n\ge n_\phi$ one has $g(M_{\phi^n})=2g+1$. Moreover, for every such $n$
there is, up to isotopy a unique Heegaard splitting of $M_{\phi^n}$ of
genus $2g+1$.
\end{sat}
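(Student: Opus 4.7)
The estimate $g(M_{\phi^n})\le 2g+1$ for all $n$ is standard: take a spine $\Gamma\subset\Sigma_g\times\{\tfrac12\}$ of the fiber (a wedge of $2g$ circles) together with a loop $\alpha$ representing the $\BS^1$-direction and meeting $\Gamma$ in a single point; a regular neighborhood of $\Gamma\cup\alpha$ is a handlebody of genus $2g+1$ whose complement in $M_{\phi^n}$ is again a handlebody.

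\textbf{Lower bound, strongly irreducible case.} Suppose for contradiction that there is $n_i\to\infty$ with $g(M_{\phi^{n_i}})\le 2g$. By \fullref{all-generalized} it suffices to treat the strongly irreducible case, the generalized setting reducing to it after realizing the incompressible part $\Sigma_I$ minimally by \fullref{thm:incompressible-minimal}. So write $M_{\phi^{n_i}}=U_i\cup V_i$ with Heegaard surface $S_i$ of genus $\le 2g$. By \fullref{Pitts-Rubinstein}, $S_i$ is isotopic to a minimal surface $F_i$ or to $\bnd\CN(F_i)$ joined by a vertical $1$-handle. Since $M_{\phi^{n_i}}$ is hyperbolic by Thurston's theorem and the intrinsic curvature of a minimal surface in an ambient of sectional curvature $-1$ is at most $-1$, Gauss--Bonnet yields the uniform bound
$$\CH^2(F_i)\le -2\pi\chi(F_i)\le 4\pi(2g-1).$$

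\textbf{Geometric contradiction.} Because $M_{\phi^{n_i}}\to M_\phi$ is a Riemannian covering, $\inj(M_{\phi^{n_i}})\ge\inj(M_\phi)=r_0>0$ uniformly in $i$, so by monotonicity of area for minimal surfaces in a uniformly thick hyperbolic manifold one gets a uniform diameter bound $\diam(F_i)\le R=R(g)$. On the other hand $M_{\phi^{n_i}}$, as the $n_i$-fold cyclic cover of $M_\phi$, contains an embedded product region isometrically approximating $\Sigma_g\times[-T_i,T_i]$ with $T_i\to\infty$, foliated by fibers of the fibration $M_{\phi^{n_i}}\to\BS^1$. For $i$ large some fiber $\Sigma\cong\Sigma_g$ lies at distance greater than $R+1$ from $F_i$, and is therefore disjoint from a tubular neighborhood of $F_i$ containing $S_i$. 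Hence $\Sigma$ lies in one of the handlebodies $U_i$, $V_i$. But $\Sigma$ is incompressible in $M_{\phi^{n_i}}$ (a fiber of a pseudo-Anosov mapping torus is incompressible), hence incompressible in that handlebody, which would exhibit the surface group $\pi_1(\Sigma_g)$ as a subgroup of a free group of rank at most $2g$---contradicting the Nielsen--Schreier theorem.

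\textbf{Uniqueness and main obstacle.} For uniqueness at genus $2g+1$ one repeats the min-max analysis: any genus $2g+1$ splitting produces a minimal surface $F$ of bounded area whose complement together with the remaining handle structure can, by the same ``trapped fiber'' obstruction, only avoid a contradiction if the configuration is, up to isotopy, the standard one above (essentially a minimal fiber plus a single tunnel crossing the fibration). The main obstacle is controlling the Pitts--Rubinstein output rigidly enough to carry this out: one must rule out that the ``vertical $1$-handle'' option of \fullref{Pitts-Rubinstein} produces a surface whose associated handles spread across the long cylindrical region, and in the generalized Heegaard setting one must ensure that the union of all minimal components of $\Sigma_I\cup\Sigma_H$ (not merely each connected piece) still leaves a fiber disjoint from it, which requires bounding the number of components of the strongly irreducible generalized splitting in terms of $g$ alone.
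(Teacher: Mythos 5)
Your proposal follows essentially the same route as the paper: Pitts--Rubinstein to replace a strongly irreducible splitting by a minimal surface, Gauss--Bonnet plus monotonicity (the paper's ``bounded diameter lemma'') to bound the diameter of that surface, the covering $M_{\phi^n}\to M_\phi$ for the uniform injectivity radius bound, and the incompressible fiber versus handlebody obstruction to derive the contradiction once fibers are spread far apart. One small organizational difference: the paper runs the contradiction for strongly irreducible splittings of genus at most $2g+1$ (not just $\le 2g$), so that the lower bound and the uniqueness of the genus $2g+1$ splitting are obtained in a single stroke --- any genus $2g+1$ splitting must then be weakly reducible and hence, by \fullref{all-generalized}, arise by amalgamating a generalized splitting whose incompressible part is a fiber. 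Both you and the paper leave the reduction via \fullref{all-generalized} as ``some work''; you are right that the real content there is bounding the number of components of the generalized splitting and controlling the vertical $1$-handle case, and those are exactly the issues that the paper also elides.
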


We sketch now the proof of \fullref{genus-mapping1}. More precisely, we
will prove that, for large $n$, there is no strongly irreducible Heegaard
splitting of $M_{\phi^n}$ of genus at most $2g+1$. The general case
follows, after some work, using \fullref{all-generalized}.

Seeking a contradiction, assume that $M_{\phi^n}$ admits a strongly
irreducible splitting with at most genus $2g+1$. Then, endowing
$M_{\phi^n}$ with its hyperbolic metric, we obtain from
\fullref{Pitts-Rubinstein} that $M_{\phi^n}$ contains a minimal surface $F$
of at most genus $2g+1$ and such that every component of
$M_{\phi^n}\setminus F$ is a handlebody. In particular, $F$ intersects
every copy of the fiber $\Sigma_g$ since the later is incompressible and a
handlebody does not contain any incompressible surfaces. For all $n$ the
manifold $M_{\phi^n}$ covers the manifold $M_\phi$. In particular, we have
first the following lower bound for the injectivity radius
$$\inj(M_{\phi^n})\ge\inj(M_\phi)$$
and secondly that increasing $n$ we can find two copies of the fiber which
are at arbitrary large distances. On the other hand, the bounded diameter
lemma for minimal surfaces below shows that the diameter of a minimal
surface in a hyperbolic 3--manifold is bounded from above only in terms of
its genus and of the injectivity radius of the manifold. This shows that if
$n$ is large the minimal surface $F$ cannot exist.

\medskip
{\bf Bounded diameter lemma for minimal surfaces (first version)}\qua\sl
Let $F$ be a connected minimal surface in a hyperbolic 3--manifold $M$ with
at least injectivity radius $\epsilon$. Then we have
$$\diam(F)\le \frac{4\vert\chi(F)\vert}\epsilon+2\epsilon$$
where $\diam(F)$ is the diameter of $F$ in $M$.\rm

\medskip
\begin{proof}
The motonicity formula (Colding--Minicozzi \cite{Colding-Minicozzi-book}) 
asserts that for every
point $x\in F$ we have $\CH^2_M(F\cap B_x(M,\epsilon))\ge\pi\epsilon^2$
where $B_x(M,\epsilon)$ is the ball in $M$ centered at $x$ and with radius
$\epsilon$.  If $F$ has diameter $D$ in $M$ we can find at least $\frac
D{2\epsilon}-1$ points which are at distance at least $\epsilon$ from each
other. On the other hand, the curvature of $F$ is bounded from above by
$-1$ and hence the total area is bounded by $\CH^2_M(F)\le
2\pi\vert\chi(F)\vert$. In particular we obtain that
$$\left(\frac D{2\epsilon}-1\right)\pi\epsilon^2\le 2\pi\vert\chi(F)\vert.$$
This concludes the proof.
\end{proof}

We stated this as a {\em first version\/} because in some sense the role of
the injectivity radius of $M$ is disappointing. However, it is not
difficult to construct hyperbolic 3--manifolds containing minimal surfaces
of say genus $2$ with arbitrarily large diameter. In order to by-pass this
difficulty we define, following Thurston, for some $\epsilon$ positive the
length of a curve $\gamma$ relative to the $\epsilon$--thin part
$M^{<\epsilon}$ of $M$ to be the length of the intersection of $\gamma$
with the set of points in $M$ with injectivity radius at least $\epsilon$.
Then, the distance $d_{\rel M^{<\epsilon}}(x, y)$ of two points $x,y\in M$
relative to the the $\epsilon$-thin part is the infimum of the lengths
relative to the $\epsilon$-thin part of paths joining $x$ and $y$. Using
this pseudo-distance we obtain with essentially the same proof the
following final version of the bounded diameter lemma.

\medskip{\bf Bounded diameter lemma for minimal surfaces}\qua\sl
Let $F$ be a connected minimal surface in a hyperbolic 3--manifold $M$ and
let $\mu>0$ be the Margulis constant. Then we have
$$\diam_{\rel M^{<\mu}}(F)\le \frac{8\vert\chi(F)\vert}\mu$$
where $\diam_{\rel M^{<\mu}}(F)$ is the diameter of $F$ in $M$ with respect
to $d_{\rel M^{<\mu}}$.
\rm

\medskip
The bounded diameter lemma, together with the argument used in the proof of
\fullref{genus-mapping1} shows now that whenever $M_\phi$ is the mapping
torus of a pseudo-Anosov mapping class on the surface $\Sigma_g$ and such
that $M_\phi$ contains two fibers which are at least at distance
$\frac{8(2g+4)}\mu$  with respect to $d_{\rel M^{<\mu}}$ then $M_\phi$ does
not have strongly irreducible Heegaard splittings of genus less than
$2g+1$. Moreover, one gets as above that $M_\phi$ has genus $2g+1$ and, up
to isotopy, a single Heegaard splitting of minimal genus. In particular, in
order to generalize \fullref{genus-mapping1}, it suffices to give
conditions on the monodromy $\phi$ ensuring that the mapping torus $M_\phi$
contains fibers at large distance. It follows for example from the work of
Minsky \cite{Minsky} that this is the case if the translation length of
$\phi$ in the curve complex $\CC(\Sigma_g)$ is large enough. In particular
we have the following theorem.

\begin{sat}\label{genus-mapping2}
For every $g$ there is $D_g>0$ such that the following holds: If $\phi$ is
a pseudo-Anosov mapping class on $\Sigma_g$ with at least translation
length $D_g$ in the curve complex $\CC(\Sigma_g)$ of $\Sigma_g$, then the
mapping torus $M_\phi$ has Heegaard genus $g(M_\phi)=2g+1$ and there is, up
to isotopy, a unique minimal genus Heegaard splitting.
\end{sat}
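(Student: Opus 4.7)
The plan is to mimic the proof sketched for \fullref{genus-mapping1}, replacing the hypothesis ``$n$ sufficiently large'' by ``translation length in $\CC(\Sigma_g)$ sufficiently large,'' and using Minsky's work on fibered hyperbolic $3$--manifolds as the bridge between these two conditions.

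First, the upper bound $g(M_\phi)\le 2g+1$ holds for every pseudo-Anosov $\phi$ by tubing a fiber once around the $S^1$--direction of $M_\phi$, producing a weakly reducible Heegaard surface of genus $2g+1$. By \fullref{all-generalized} together with standard stabilization considerations, the theorem then reduces to the following two assertions, for $D_g$ chosen suitably: (i) $M_\phi$ admits no strongly irreducible Heegaard splitting of genus at most $2g+1$, and (ii) the only strongly irreducible generalized Heegaard splitting giving rise, after amalgamation, to a genus $2g+1$ splitting of $M_\phi$ is the standard one.

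To establish (i), suppose for contradiction that $F$ is a strongly irreducible Heegaard surface in $M_\phi$ of genus at most $2g+1$. Endowing $M_\phi$ with its hyperbolic metric and applying \fullref{Pitts-Rubinstein}, we can replace $F$ (possibly after collapsing along a normal bundle) by a minimal surface of genus at most $2g+1$ whose complement consists of handlebodies. Since the fiber $\Sigma_g$ is incompressible and handlebodies contain no closed incompressible surfaces, $F$ must intersect every copy of the fiber in $M_\phi$. The bounded diameter lemma for minimal surfaces yields
\[
\diam_{\rel M_\phi^{<\mu}}(F)\le\frac{8\,|\chi(F)|}{\mu}\le\frac{32g}{\mu}.
\]
The desired contradiction will follow once the hypothesis on $\phi$ is known to force the existence of two copies of the fiber in $M_\phi$ at $d_{\rel M_\phi^{<\mu}}$--distance exceeding $32g/\mu$.

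Producing such far-apart fibers is the main obstacle, and is where Minsky's work enters. The key point is that the $d_{\rel M_\phi^{<\mu}}$--length of a path in $M_\phi$ joining two distinct copies of the fiber grows at least linearly in the translation length of $\phi$ on $\CC(\Sigma_g)$, with multiplicative and additive constants that depend only on the topology of $\Sigma_g$; this can be extracted from Minsky's model of the infinite cyclic cover $M_\phi'\cong\Sigma_g\times\BR$, in which the thick part is combinatorially modeled on the curve complex geodesic associated to $\phi$. Choosing $D_g$ so that translation length at least $D_g$ forces the required separation $>32g/\mu$ settles (i). For (ii), any non-stabilized Heegaard splitting of genus $\le 2g+1$ is the amalgamation of a strongly irreducible generalized splitting $(\Sigma_I,\Sigma_H)$; by Thurston's classification of incompressible surfaces in mapping tori of pseudo-Anosovs, $\Sigma_I$ must consist of fibers, and a straightforward analysis of strongly irreducible Heegaard splittings of the product pieces $\Sigma_g\times I$ (bounded again, if necessary, by the minimal surface argument of step (i) applied to each piece) forces $(\Sigma_I,\Sigma_H)$ to agree up to isotopy with the standard generalized splitting.
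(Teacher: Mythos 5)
Your proposal follows the same route as the paper: reduce to strongly irreducible splittings via \fullref{all-generalized}, replace a hypothetical strongly irreducible Heegaard surface of genus $\le 2g+1$ by a Pitts--Rubinstein minimal surface, note that it must meet every fiber while the bounded diameter lemma caps its $d_{\rel M^{<\mu}}$--diameter, and then invoke Minsky's work to show that large translation length in $\CC(\Sigma_g)$ forces two fibers to be farther apart than that bound. One small imprecision: you assert that the relative length of \emph{a} path joining two copies of the fiber grows linearly in the translation length, but two parallel pushoffs of a single fiber are always close; what you really want (and what Minsky's model gives) is that there \emph{exist} two disjoint fibers whose relative distance grows linearly, which is the statement the argument actually uses.
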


Observe that whenever $\phi$ is pseudo-Anosov, the translation lengths of
$\phi^n$ in $\CC(\Sigma_g)$ tends to $\infty$ when $n$ becomes large. In
other word, \fullref{genus-mapping1} follows from \fullref{genus-mapping2}.

An other result in the same spirit is due to Lackenby
\cite{Lackenby-genus}, who proved that whenever $M_1$ and $M_2$ are
compact, irreducible, atoroidal 3--manifolds with incompressible and
acylindrical homeomorphic connected boundaries $\bnd M_1=\bnd M_2$ of at
least genus $2$ and $\phi\in\Map(\bnd M_1)$ is a pseudo-Anosov mapping
class then the manifold $M_1\cup_{\phi^n}M_2$ obtained by gluing $M_1$ and
$M_2$ via $\phi^n$ has Heegaard genus $g(M_1)+g(M_2)-g(\bnd M_1)$. In this
setting the key point is again that if $n$ is large then the manifold
$M_1\cup_{\phi^n}M_2$ contains two surfaces isopic to the gluing surface
and which are at large distance. And again there is a generalization
involving the curve complex.

\begin{sat}[Souto \cite{Souto-genus}]\label{simple-genus}
Let $M_1$ and $M_2$ be compact, irreducible, atoroidal 3--manifolds with
incompressible and acylindrical homeomorphic connected boundaries $\bnd
M_1=\bnd M_2$ of genus at least two and fix an essential simple closed
curve $\alpha$ in $\bnd M_1$. Then there is a constant $D$ such that every
minimal genus Heegaard splitting of $M_1\cup_\phi M_2$ is constructed
amalgamating splittings of $M_1$ and $M_2$ and hence
$$g(M_1\cup_\phi M_2)=g(M_1)+g(M_2)-g(\bnd M_1)$$
for every diffeomorphism $\phi \co \bnd M_1\to\bnd M_1$ with $d_{\CC(\bnd
M_1)}(\phi(\alpha),\alpha)\ge D$.
\end{sat}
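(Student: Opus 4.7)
The plan is to adapt the minimal surface / bounded diameter strategy of Theorems \ref{genus-mapping1} and \ref{genus-mapping2} to the amalgamated setting. Write $\Sigma=\bnd M_1=\bnd M_2$. The upper bound $g(M_1\cup_\phi M_2)\le g(M_1)+g(M_2)-g(\Sigma)$ is automatic from the amalgamation construction, so the real content is to show that, when $d_{\CC(\Sigma)}(\phi(\alpha),\alpha)$ is large, every Heegaard splitting of $M_1\cup_\phi M_2$ of at most this genus is an amalgamation of splittings of $M_1$ and $M_2$.

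First I set up the geometry. Thurston's hyperbolization for Haken manifolds gives $M_1$ and $M_2$ convex cocompact hyperbolic structures (the boundaries are incompressible and acylindrical). When $d_{\CC(\Sigma)}(\phi(\alpha),\alpha)$ is large, the Geometrization Conjecture, or a direct negatively curved gluing construction in the spirit of Namazi--Souto, endows $M=M_1\cup_\phi M_2$ with a metric of pinched negative curvature. The key geometric input I will need is that, for $D$ large, $M$ contains between the (images of the) convex cores of $M_1$ and $M_2$ a long product-like region threaded by pleated, hence $\mathrm{CAT}(-1)$, surfaces $\Sigma_1,\dots,\Sigma_N$, all homotopic to $\Sigma$ in $M$, with pairwise distance in $d_{\rel M^{<\mu}}$ going to infinity with $D$ and $N\to\infty$ as $D\to\infty$.

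Now take any Heegaard splitting of $M$ of genus at most $g(M_1)+g(M_2)-g(\Sigma)$. By Scharlemann--Thompson (Theorem \ref{all-generalized}), after destabilizing it is obtained by amalgamation from a strongly irreducible generalized Heegaard splitting $(\Sigma_I,\Sigma_H)$ in which every surface has genus bounded by $g(M_1)+g(M_2)-g(\Sigma)$. Realize $\Sigma_I$ as a minimal surface by Theorem \ref{thm:incompressible-minimal}; then, in the mean-convex complement $M\setminus\Sigma_I$, apply Pitts--Rubinstein (Theorem \ref{Pitts-Rubinstein}) to realize $\Sigma_H$ as a minimal surface $F$ (or as the boundary of a regular neighborhood of one, with a vertical handle attached, which can be handled separately). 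The bounded diameter lemma for minimal surfaces then bounds $\diam_{\rel M^{<\mu}}$ of each component of $\Sigma_I\cup F$ linearly in its Euler characteristic, and hence uniformly in terms of the genus bound above.

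The dichotomy is now: either some component of $\Sigma_I$ is isotopic to $\Sigma$, or not. In the latter case, each pleated copy $\Sigma_i$ is a closed incompressible surface contained in $M\setminus\Sigma_I$, and since the components of $M\setminus(\Sigma_I\cup F)$ are compression bodies, no $\Sigma_i$ can lie entirely in such a component; hence every $\Sigma_i$ must meet $\Sigma_I\cup F$. But by the bounded diameter estimate, $\Sigma_I\cup F$ can meet only boundedly many $\Sigma_i$, so choosing $D$ so large that $N$ exceeds this bound yields a contradiction. Therefore a component of $\Sigma_I$ is isotopic to $\Sigma$; cutting $(\Sigma_I,\Sigma_H)$ along this component exhibits it as the amalgamation of generalized Heegaard splittings of $M_1$ and $M_2$, which forces $g(\Sigma_H)\ge g(M_1)+g(M_2)-g(\Sigma)$ and identifies the original splitting as an amalgamation.

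The main obstacle is the geometric input at the outset: producing pleated copies of $\Sigma$ pairwise far apart in $d_{\rel M^{<\mu}}$ from the single-curve hypothesis $d_{\CC(\Sigma)}(\phi(\alpha),\alpha)\ge D$, rather than from a large translation length. This is subtler than the mapping torus case and seems to require either the full Minsky / Brock--Canary--Minsky model manifold for convex cocompact ends, combined with a subsurface-projection argument to upgrade single-curve displacement into a long product region, or a direct negatively curved model construction as in Namazi--Souto adapted to the amalgamation setting.
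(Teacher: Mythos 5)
Your outline follows exactly the strategy the survey sketches for this family of results: destabilize and apply Scharlemann--Thompson to reduce to a strongly irreducible generalized Heegaard splitting, realize $\Sigma_I$ by \fullref{thm:incompressible-minimal} and $\Sigma_H$ by \fullref{Pitts-Rubinstein}, invoke the bounded diameter lemma, and contradict the presence in $M_1\cup_\phi M_2$ of many pairwise-far copies of the gluing surface $\Sigma$. The survey itself does not prove \fullref{simple-genus}; it states it with a pointer to \cite{Souto-genus} and the one-line hint that ``the key point is again that the manifold contains two surfaces isotopic to the gluing surface which are at large distance,'' so your reconstruction matches the indicated route rather than some alternative. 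You are also right, and honest, that the genuine technical content is the first step you leave open: converting the hypothesis $d_{\CC(\Sigma)}(\phi(\alpha),\alpha)\ge D$, which involves a single fixed curve $\alpha$, into a metric statement (a long product region between the convex cores of $M_1$ and $M_2$, threaded by $N\to\infty$ pairwise-far pleated copies of $\Sigma$). That is the whole ballgame; the rest is a rerun of the mapping-torus argument. Note also that the existence of a hyperbolic metric on $M_1\cup_\phi M_2$ needs no geometrization conjecture: acylindricity of $\bnd M_1,\bnd M_2$ rules out essential tori, and $\Sigma$ is incompressible, so the glued manifold is Haken, irreducible and atoroidal, hence hyperbolic by Thurston; the issue is getting geometric \emph{control}, which is where the Namazi--Souto-type gluing or the model manifold machinery enters.

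Two small inaccuracies worth correcting. First, it is not true that a compression body contains no incompressible closed surfaces -- it contains surfaces parallel to its interior boundary. The correct argument is: if a pleated copy $\Sigma_i$ of $\Sigma$ were entirely inside a compression body component of $M\setminus(\Sigma_I\cup F)$, it would have to be parallel to a component of $\Sigma_I$, which is excluded by the case assumption that no component of $\Sigma_I$ is isotopic to $\Sigma$. Second, ``$\Sigma_I\cup F$ can meet only boundedly many $\Sigma_i$'' should be justified via the fact that $\Sigma_I\cup F$ has a \emph{bounded number of components each of bounded $d_{\rel M^{<\mu}}$-diameter}, not via a bound on the diameter of the union (which can be large); each component then meets boundedly many of the pairwise-separated $\Sigma_i$, and multiplying gives the bound. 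Finally, the closing sentence identifying the original splitting as an amalgamation once a $\Sigma$-parallel component of $\Sigma_I$ is found is a bit quick: one has to track that destabilization, restriction to $M_1$ and $M_2$, and re-amalgamation of the generalized splittings recover the original splitting and that minimality forces the induced splittings of $M_1,M_2$ to be minimal genus. This is routine but should be spelled out.
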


Lackenby's argument in fact uses minimal surfaces and it was reading his
paper when the author of this note became interested in the relation
between minimal surfaces and Heegaard splittings. Lackenby's paper is also
beautifully written.

A different result, the oldest of the ones presented in this section, and
also proved using minimal surfaces, involves the Heegaard genus of those
hyperbolic 3--manifolds obtained by Dehn-filling a finite volume manifold
with cusps. Recall that every complete, non-compact, orientable complete
hyperbolic manifold $M$ with finite volume is homeomorphic to the interior
of a compact manifold $\wbar M$ with torus boundary. For simplicity we will
assume that $\bnd \wbar M$ has only one component; we say that $M$ has a
single cusp. Identifying $\bnd\wbar M$ with the boundary of a solid torus
$\BD^2\times\BS^1$ via some map $\phi$ and gluing both $\wbar M$ and
$\BD^2\times\BS^1$ via this identification we obtain a closed 3--manifold
$M_\phi$ which is said to arise from $M$ by Dehn-filling. In fact, the
homeomorphism type of $M_\phi$ depends only on the homotopy class in
$\bnd\wbar M$ of the meridian of the attached solid torus. In other words,
for every essential simple closed curve $\gamma$ in $\bnd M$ there is a
unique 3--manifold $M_\gamma$ obtained by Dehn-filling along $\gamma$.
Thurston's beautiful Dehn filling theorem asserts that for all but finitely
many $\gamma $ the manifold $M_\gamma$ admits a hyperbolic metric. An
extension due to Hodgson--Kerckhoff \cite{Hodgson-Kerckhoff} of this result
asserts that the number of exceptions is in fact bounded independently of
the manifold $M$.

By construction, there is a natural embedding $M\hookrightarrow M_\gamma$
and it is easy to see that every Heegaard surface of $M$ is, via this
embedding, also a Heegaard surface of $M_\gamma$ for every Dehn-filling of
$M$. This proves that
$$g(M_\gamma)\le g(M)$$
for all $M_\gamma$. It is not difficult to construct examples which show
that there are finite volume hyperbolic manifolds admitting infinitely many
Dehn-fillings for which equality does not hold. However, in some sense,
equality holds for most Dehn-fillings of $M$. More precisely, identifying
the set $\CS(\bnd\wbar M)$ of homotopy class of essential simple curves in
$\bnd\wbar M$ with the set of vertices of the Farey graph one obtains a
distance on $\CS(\bnd\wbar M)$. Using this distance one has the following
theorem.

\begin{sat}[Moriah--Rubinstein
\cite{Moriah-Rubinstein}]\label{Moriah-Rubinstein}
Let $M$ be a complete, oriented finite volume hyperbolic manifold with a
cusp. Then there is a bounded set $K$ in $\CS(\bnd\wbar M)$ with
$g(M_\gamma)=g(M)$ for every $\gamma\notin K$.
\end{sat}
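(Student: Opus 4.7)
The plan is to argue by contradiction. Suppose there is a sequence of slopes $\gamma_i$ whose Farey distances from a fixed base slope tend to infinity, yet $g(M_{\gamma_i}) \le g(M) - 1$ for all $i$. By Thurston's hyperbolic Dehn filling theorem, for $i$ large enough $M_{\gamma_i}$ is hyperbolic, and the pointed hyperbolic manifolds $(M_{\gamma_i},p_i)$, with basepoints in a fixed compact piece of the thick part of $M$, converge geometrically to $M$. Under this convergence the cores $\alpha_i$ of the Dehn-filling solid tori become closed geodesics of length tending to zero, so their Margulis tubes $T_i$ have tube radii tending to infinity.

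For each such $i$ I would fix a Heegaard splitting of $M_{\gamma_i}$ of minimal genus, destabilize via \fullref{all-generalized} to a strongly irreducible generalized Heegaard splitting $(\Sigma^i_I,\Sigma^i_H)$ with every component of genus at most $g(M)-1$, and then realize $\Sigma^i_I$ by a stable embedded minimal surface using \fullref{thm:incompressible-minimal} and $\Sigma^i_H$ by an unstable embedded minimal surface using \fullref{Pitts-Rubinstein}. This yields a (possibly disconnected) minimal surface $F_i \subset M_{\gamma_i}$ of total genus at most $g(M)-1$. By the bounded diameter lemma each component of $F_i$ has diameter relative to the $\mu$-thin part of $M_{\gamma_i}$ bounded uniformly in $i$; since the tube radius of $T_i$ tends to infinity, $F_i$ can enter $T_i$ only through a collar of $\bnd T_i$ of bounded extrinsic depth.

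Next I would analyze the intersection $F_i \cap T_i$. Using the explicit geometry of a tube around a very short geodesic in $\BH^3$ together with the Gauss--Bonnet area bound $\CH^2(F_i) \le 2\pi|\chi(F_i)|$ coming from minimality, I would show that for $i$ large every component of $F_i \cap T_i$ is either a meridional disk (which can be compressed away inside $T_i$) or an annulus parallel to $\bnd T_i$ (which can be discarded by isotopy). This produces a surface $F'_i \subset M_{\gamma_i} \setminus T_i$ of genus at most $g(M)-1$ still defining a generalized Heegaard splitting of $M_{\gamma_i}\setminus T_i \cong \wbar M$: compressing along disks inside $T_i$ preserves the Heegaard property of the pieces, and the torus $\bnd T_i$ sits as an incompressible interior boundary of the appropriate compression bodies. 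Pulling $F'_i$ back through the almost isometric embeddings provided by geometric convergence and amalgamating the resulting generalized splitting yields a Heegaard splitting of $\wbar M$ of genus at most $g(M)-1$, contradicting the minimality of $g(\wbar M)=g(M)$.

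The main obstacle will be the third step, namely ruling out that $F_i \cap T_i$ contains essential annuli whose boundary curves have non-meridional slope on $\bnd T_i$: such an annulus would wrap around the core $\alpha_i$ longitudinally and could not be removed without introducing new topology. This is precisely where the Farey-distance hypothesis enters, since for slopes $\gamma_i$ at large Farey distance from the base slope, the possible non-meridional wrapping curves on $\bnd T_i$ are combinatorially complex enough that any minimal annular representative in $T_i$ would have area exceeding the Gauss--Bonnet bound $2\pi|\chi(F_i)|$, so only meridional disks and boundary-parallel annuli can occur.
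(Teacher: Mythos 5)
The paper does not actually give a proof of this theorem---it is stated with citations to Moriah--Rubinstein and to Rieck--Sedgwick's subsequent topological proof, and the surrounding discussion only indicates that the original proof used minimal surfaces. So you are being judged against the general method of \fullref{sec:genus} rather than against a written-out argument. Your overall scheme (geometric convergence of $M_{\gamma_i}$ to $M$, Scharlemann--Thompson reduction to a strongly irreducible generalized splitting, Pitts--Rubinstein and the bounded diameter lemma) is the right toolbox for the section, but there are two genuine gaps, and the second one is exactly the place where the Farey hypothesis has to do its work.

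First, the claim that ``$F_i$ can enter $T_i$ only through a collar of $\bnd T_i$ of bounded extrinsic depth'' does not follow from the bounded diameter lemma. The lemma in the paper bounds $\diam_{\rel M^{<\mu}}(F_i)$, the diameter \emph{relative to the $\mu$--thin part}, and $T_i$ is the $\mu$--thin part around $\alpha_i$. Travelling arbitrarily deep into $T_i$ contributes nothing to the relative diameter, so the lemma as stated is silent about how far $F_i$ can penetrate the tube. Ruling out the minimal surface coming near the short core geodesic is a separate and nontrivial step; compare the ``unknotting'' \fullref{unknotting} (for strongly irreducible splittings) and Otal's unknotting theorem, whose proofs pass through CAT$(-1)$ geometry of the complementary pieces, not just Gauss--Bonnet and monotonicity.

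Second, your explanation of where the Farey-distance hypothesis enters is not quite right, and the final topological step is underspecified. Inside a solid torus every incompressible, $\bnd$-incompressible annulus with boundary on $\bnd T_i$ is boundary-parallel (regardless of slope), so there is no family of ``combinatorially complex longitudinal annuli'' to exclude by an area estimate: once you have reduced $F_i\cap T_i$ to essential pieces, annuli can be isotoped off and disks are meridional, with no reference to Farey distance. The actual problem is after you have isotoped $F_i$ off $T_i$: you then have $F_i\subset M_{\gamma_i}\setminus T_i\cong\wbar M$ separating $M_{\gamma_i}$ into two compression bodies $U_i,V_i$, one of which, say $U_i$, contains $T_i$. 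For $F_i$ to be a Heegaard surface of $\wbar M$ you need $U_i\setminus T_i$ to be a compression body with $\bnd T_i$ as interior boundary, i.e.\ you need $\alpha_i$ to be isotopic to a core of a $1$--handle of $U_i$. This is the true ``unknotting'' statement, it is not addressed in your sketch, and this is where the constraint on the filling slope is really needed. Note also that your contradiction scheme (slopes with Farey distance $\to\infty$, hence Euclidean norm $\to\infty$) would, if it closed up, yield a \emph{finite} exceptional set; the theorem only asserts a \emph{bounded} one, and bounded sets in the Farey graph may be infinite, as the paper points out right after the statement. That discrepancy is a further indication that the argument as written is missing the step where Farey distance genuinely matters.
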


Before concluding this section observe that the Farey graph is not locally
compact and hence bounded sets may be infinite. Observe also that a purely
topological proof of \fullref{Moriah-Rubinstein} is due to Rieck--Sedgwick
\cite{Rieck-Sedgwick}.

\section{Generators of the fundamental group and carrier
graphs}\label{sec:carrier}
Let $M$ be a closed hyperbolic, or more generally, negatively curved
3--manifold. In this section we relate generating sets, or more precisely
Nielsen equivalence classes of generating sets, of $\pi_1(M)$ to some
graphs in $M$ with nice geometric properties.

Recall that two (ordered) generating sets $\CS=(g_1,\dots,g_r)$ and
$\CS'=(g_1',\dots,g_r')$ of a group are {\em Nielsen equivalent\/} if they
belong to the same class of the equivalence relation generated by the
following three moves:
$$\begin{array}{ll}
\text{Inversion of } g_i & \left\{\begin{array}{ll}g_i'=g_i^{-1} & \\
g_k'=g_k & k\neq i.\end{array}\right. \\
\text{Permutation of } g_i \text{ and } g_j \text{ with } i\neq j&
\left\{\begin{array}{ll}g_i'=g_j & \\ g_j'=g_i & \\ g_k'=g_k & k\neq
i,j.\end{array}\right. \\
\text{Twist of } g_i \text{ by } g_j \text{ with } i\neq j &
\left\{\begin{array}{ll}g_i'=g_ig_j & \\ g_k'=g_k & k\neq
i.\end{array}\right.
\end{array}$$

To every Nielsen equivalence class of generators of $\pi_1(M)$ one can
associate an equivalence class of carrier graphs.

\begin{defi*}
A continuous map $f \co X\to M$ of a connected graph $X$ into a hyperbolic
3--manifold $M$ is a {\em carrier graph\/} if the induced homomorphism
$f_* \co \pi_1(X)\to\pi_1(M)$ is surjective. Two carrier graphs $f \co X\to M$ and
$g \co Y\to M$ are {\em equivalent\/} if there is a homotopy equivalence 
$h \co X\to Y$ such that $f$ and $g\circ h$ are free homotopic.
\end{defi*}

Given a generating set $\CS=(g_1,\dots,g_r)$ of $\pi_1(M)$ let $\BF_\CS$ be
the free non-abelian group generated by the set $\CS$,
$\phi_\CS \co \BF_\CS\to\pi_1(M)$ the homomorphism given by mapping the free
basis $\CS\subset\BF_\CS$ to the generating set $\CS\subset\pi_1(M)$ and
$X_\CS$ a graph with $\pi_1(X_\CS)=\BF_\CS$. The surjective homomorphism
$\phi_\CS \co \BF_\CS\to\pi_1(M)$ determines a free homotopy class of maps
$f_\CS \co X_\CS\to M$, ie a carrier graph, and any two carrier graphs
obtained in this way are equivalent. The so determined equivalence class is
said to be the {\em equivalence class of carrier graphs associated to
$\CS$\/}.

\begin{lem}\label{Nielsen}
Let $\CS$ and $\CS'$ be finite generating sets of $\pi_1(M)$ with the same
cardinality. Then the following are equivalent.
\begin{enumerate}
\item $\CS$ and $\CS'$ are Nielsen equivalent.
\item There is a free basis $\wbar\CS$ of $\BF_{\CS'}$ with
$\CS=\phi_{\CS'}(\wbar\CS)$.
\item There is an isomorphism $\psi \co \BF_\CS\to\BF_{\CS'}$ with
$\phi_\CS=\phi_{\CS'}\circ\psi$.
\item $\CS$ and $\CS'$ have the same associated equivalence classes of
carrier graphs.
\end{enumerate}
\end{lem}

We will only consider carrier graphs $f \co X\to M$ with
$\rank(\pi_1(X))=\rank(\pi_1(M))$. Equivalently we only consider generating
sets with minimal cardinality.

If $f \co X\to M$ is a carrier then let $X^{(0)}$ be the set of vertices of $X$
and $X^{(1)}$ that of edges. The {\em length\/} of a carrier graph $f \co X\to M$
is defined as the sum of the lengths of the images of the edges
$$l_{f \co X\to M}(X)=\sum_{e\in X^{(1)}}l_M(f(e)).$$
A {\em minimal length\/} carrier graph is a carrier graph $f \co X\to M$ with
$$l_{f \co X\to M}(X)\le l_{f' \co X'\to M}(X')$$
for every other equivalent carrier graph $f':X'\to M$. The existence of
minimal length carrier graphs follows from the Arzela--Ascoli theorem if $M$
is closed and in fact one has the following lemma.

\begin{lem}{\rm White \cite[Section 2]{White}} \qua \label{minimal}
If $M$ is a closed hyperbolic 3--manifold, then there is a minimal length
carrier graph $f \co X\to M$. Moreover, every such minimal length carrier graph
is trivalent, hence it has $3(\rank(M)-1)$ edges, the image in $M$ of its
edges are geodesic segments, the angle between any two adjacent edges is
$\frac{2\pi}3$ and every simple closed path in $X$ represents a non-trivial
element in $\pi_1(M)$.
\end{lem}

It is not difficult to see that if $M_{\phi^n}$ is the mapping torus of a
high power of a pseudo-Anosov mapping class then every minimal length
carrier graph, with minimal cardinality, has huge total length; in
particular it contains some large edge. However, the following simple lemma
asserts that there is a universal upper bound for the length of the
shortest edge in a carrier graph in closed hyperbolic 3--manifold.

\begin{lem}\label{lem:short-edge}
There is some positive $L$ such that every minimal length carrier graph in
a closed hyperbolic 3--manifold has an edge shorter than $L$.
\end{lem}

\begin{proof}
Let $M=\BH^3/\Gamma$ be a hyperbolic 3--manifold and assume that there is a
minimal carrier graph $f \co X\to M$ consist of only extremely long edges.
Denote by $\tilde f \co \tilde X\to\BH^3$ the lift of $f$ to a map between the
universal covers. The image under $\tilde f$ of every monotonous
bi-infinite path in $\tilde X$ consists of extremely long geodesic segments
joined at corners with angle $\frac{2\pi}3$. In particular, every such path
is a quasi-geodesic and $\tilde f$ is a quasi-geodesic embedding, implying
that the homomorphism $f_* \co \pi_1(X)\to\pi_1(M)$ is injective. Hence
$\pi_1(M)$ is free contradicting the assumption that $M$ is closed.
\end{proof}

\fullref{lem:short-edge} for itself is of little use; the point is that it
has some grown up relatives which, in some sense made precise below, allow
to decompose carrier graphs into short pieces. For instance, White
\cite{White} proved that again there is some positive constant $L$ such
that every carrier graph in a closed hyperbolic 3--manifold admits a circuit
shorter than $L$. In particular he obtained the following beautiful result.

\begin{sat}{\rm White \cite{White}} \qua \label{white-inj}
For every integer $r$ there is a positive constant $L$ with $\inj(M)\le L$
for every closed hyperbolic 3--manifold whose fundamental group has at most
rank $r$.
\end{sat}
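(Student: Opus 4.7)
The plan is to bound the length of the shortest closed geodesic in $M$ in terms of $r$; since for a closed hyperbolic 3--manifold the injectivity radius $\inj(M)$ equals half the length of the shortest closed geodesic, this yields the desired bound. By \fullref{minimal} pick a minimal length carrier graph $f\co X\to M$: it is trivalent with exactly $3(r-1)$ edges, its edges are mapped to geodesic segments meeting at angles $\frac{2\pi}{3}$, and every simple closed path in $X$ represents a non-trivial element of $\pi_1(M)$. Hence it suffices to find in $X$ a simple closed path of total length bounded by a constant depending only on $r$, for then the free homotopy class of its image contains a non-trivial closed geodesic of no greater length.

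The heart of the argument is to upgrade \fullref{lem:short-edge} from a single short edge to a short loop. I would build inductively a subgraph $Y_i\subset X$, starting with $Y_0$ equal to the vertex set of $X$ and at stage $i$ adding an edge of length at most $L_i$, where $L_0\le L_1\le\dots\le L_{r-1}$ is a sequence of rank-dependent constants to be specified. The procedure stops either when $Y_i$ already contains a simple closed loop, in which case that loop has total length at most $3(r-1)L_{r-1}$ and we are done, or when every edge of $X\setminus Y_i$ has length exceeding $L_i$; in the latter case $Y_i$ is a forest and all remaining edges of $X$ are long.

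In the second case, collapse each component of $Y_i$ to a point; this yields a graph $X'$ with $\pi_1(X')$ still free of rank $r$ and a carrier map $f'\co X'\to M$ whose edges are long geodesic segments meeting at vertex angles bounded uniformly away from $\pi$ (this is exactly the property for which the $L_i$ must be calibrated). The broken-geodesic argument behind \fullref{lem:short-edge} then shows that the lift $\tilde f'\co \tilde{X'}\to\BH^3$ is a quasi-isometric embedding of a tree, so $f'_*\co \pi_1(X')\to\pi_1(M)$ is injective; being also surjective, it is an isomorphism. This would force $\pi_1(M)$ to be free, contradicting the fact that $M$ is a closed aspherical 3--manifold, so that $H_3(\pi_1(M);\BZ)=H_3(M;\BZ)=\BZ$, whereas free groups have vanishing $H_i$ for $i\ge 2$.

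The main obstacle is the calibration of the constants $L_i$ in the collapsing step. When two edges of $X\setminus Y_i$ that were not adjacent in $X$ become adjacent in $X'$, the angle they make at the new vertex is a perturbation, via hyperbolic geometry, of the angles $\frac{2\pi}{3}$ at the endpoints of a path inside $Y_i$ whose total length is at most $O(rL_{r-1})$. To keep this new angle bounded away from $\pi$ uniformly in $M$, one must arrange that each edge outside $Y_i$ is much longer than the diameter of the component of $Y_i$ it emanates from; this is achieved by choosing each $L_i$ to grow sufficiently fast with $i$, as dictated by the quantitative form of the broken-geodesic quasi-geodesic criterion in $\BH^3$. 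A careful induction on $i$ makes the bookkeeping work and yields the desired constant $L=L(r)$.
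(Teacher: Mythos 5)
Your reduction of $\inj(M)$ to the systole, your appeal to \fullref{minimal} for the structure of a minimal length carrier graph, and your observation that a short simple closed path in $X$ yields a short closed geodesic in $M$ (hence bounding $\inj(M)$) are all correct, and this is exactly the route the paper indicates: White's theorem is obtained by showing that every minimal length carrier graph in a closed hyperbolic $3$--manifold contains a circuit of universally bounded length. Your final contradiction ($\pi_1(M)$ free versus $H_3(M;\BZ)=\BZ$) is also fine.

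The gap is in the step where you collapse the forest $Y_i$ and claim that the resulting map $f'\colon X'\to M$ has edges meeting at vertex angles uniformly bounded away from the degenerate case, with the $L_i$ calibrated to achieve this. This claim is false, and no choice of $L_i$ can repair it, because the angles of the collapsed graph are governed by how the tree $Y_i$ \emph{twists} in $\BH^3$, not by the lengths of the surrounding edges. At each trivalent vertex of a minimal carrier graph the three edge directions are coplanar with pairwise angles $\frac{2\pi}{3}$; when a path enters a component $T$ of $Y_i$ along a long edge $e$, crosses $k$ edges of $T$, and exits along a long edge $e'$, it can turn by up to $\pi/3$ at each of the $k+1$ vertices it meets. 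Already for a tree component with two edges ($k=2$), a direct computation with the $\frac{2\pi}{3}$ coplanarity constraints shows that the cones of admissible exit directions for $e$ and $e'$ intersect, so that $e$ and $e'$ can be parallel (the transported tangent vectors coincide) and the lifted path can double back completely. The "perturbation of the $\frac{2\pi}{3}$ angles" heuristic is only valid for a tree with a single edge, where the angle is bounded below by $\pi/3$; with more edges, the accumulated turning is unbounded below. Since the angle defect sits inside the bounded-diameter tree and not along the long edges, lengthening those edges does not restore the quasi-geodesic constants.

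This is precisely the difficulty that the paper's hinted approach is designed to circumvent. Rather than collapsing $Y_i$ to a point and pretending the result is a broken geodesic with good corners, one replaces each component $\tilde{Y}$ of $\pi_X^{-1}(Y_i)$ by its thick convex hull $TCH(\tilde Y)$ and measures edges by their \emph{relative length} outside these convex sets (\fullref{bounded-chains}). Convexity, not an angle count, is what guarantees that once a long edge exits $TCH(\tilde Y)$ it cannot re-enter, and it is from this, combined with minimality of $f$, that one deduces the quasi-geodesicity of the lifted paths. Your proposal would need to be rebuilt around this convex-hull mechanism (or around some other device that genuinely rules out the doubling-back configurations), rather than around a calibration of edge-length thresholds, which leaves the angular degeneracy untouched.
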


Unfortunately, White's observation is the end if one takes the most naive
point of view: there are examples showing that every subgraph with
non-abelian fundamental group can be made as long as one wishes or fears.
However, the idea is still to obtain a exhaustion of every carrier graph by
subgraphs which in some sense are short. The solution is to consider the
length of a carrier graph relative to the convex-hull of a subgraph.

However, before making this more precise we need some more notation. If
$f \co X\to M$ is a carrier graph and $Y\subset X$ is a subgraph then let
$Y^{(0)}$ be again the set of vertices and $Y^{(1)}$ that of edges; let
also $\pi_X \co \tilde X\to X$ be the universal covering of $X$ and $\tilde
f \co \tilde X\to\BH^3$ a fixed lift of $f$ to a map between the universal
coverings of $X$ and $M$. If $Y\subset X$ is a connected subgraph of $X$
then every connected component of $\pi_X^{-1}(Y)$ can be identified with
the universal cover of $Y$. Given such a component $\tilde Y$ of
$\pi_X^{-1}(Y)$ let $G(\tilde Y)\subset\pi_1(X)$ be the group of all
covering transformations of $\pi_X \co \tilde X\to X$ preserving $\tilde Y$;
$G(\tilde Y)$ is isomorphic to $\pi_1(Y)$. Denote by $\Gamma_{\tilde Y}$
the image of $G(\tilde Y)$ under the homomorphism
$f_* \co \pi_1(X)\to\pi_1(M)$.

If $Y$ is a connected subgraph of a carrier graph $f \co X\to M$ and $\tilde Y$
is a component of $\pi_X^{-1}(Y)$ we define the {\em thick convex-hull\/}
$TCH(\tilde Y)$ as follows.

\begin{defi*}
The {\em thick convex-hull\/} $TCH(\tilde Y)$ of a component $\tilde Y$ of
$\pi_X^{-1}(Y)$ is the smallest closed convex subset of $\BH^3$ containing
$\tilde f(\tilde Y)$ and with
$$d_{\BH^3}(x,\gamma x)\ge 1$$
for all non-trivial $\gamma\in\Gamma_{\tilde Y}$ and $x\notin TCH(\tilde
Y)$.
\end{defi*}

The thick convex-hull is unique because intersection of convex subsets is
convex and uniqueness implies that $TCH(\tilde Y)$ is invariant under
$\Gamma_{\tilde Y}$ and in particular it contains the convex-hull of the
limit set of $\Gamma_{\tilde Y}$. However, there are several reasons for
introducing the thick convex-hull $TCH(\tilde Y)$ instead of working
directly with the convex-hull of the limit set of $\Gamma_{\tilde Y}$. For
example we want to avoid treating differently the case that $Y$ is a tree.

We are now ready to formally define the length of a carrier graph $f \co X\to
M$ relative to a subgraph $Y$ with $X^{(0)}\subset Y$. If $e\in
X^{(1)}\setminus Y^{(1)}$ is an edge which is not contained in $Y$ and
$\tilde e$ is a lift of $e$ to the universal cover $\tilde X$ of $X$ then
the vertices of $\tilde e$ are contained in two different components
$\tilde Y_1$ and $\tilde Y_2$ of $\pi_X^{-1}(Y)$. We define the length of
$\tilde e$ relative to $\pi_X^{-1}(Y)$ to be the length of the part of
$\tilde f(\tilde e)$ which is disjoint of the union
$$TCH(\tilde Y_1)\cup TCH(\tilde Y_2)$$
of the thick convex-hulls of $\tilde Y_1$ and $\tilde Y_2$. If $\tilde e'$
is a second lift of $e$ to $\tilde X$ then both $\tilde e$ and $\tilde e'$
have the same length relative to $\pi_X^{-1}(Y)$. In particular, the
relative length with respect to $Y$
$$l_{f \co X\to M,\rel(Y)}(e)$$
of the edge $e$ is well-defined.

If $Z\subset X$ is a second subgraph with $Y\subset Z$ then we define the
length of $Z$ relative to $Y$ to be the sum of the relative lengths of all
the edges contained in $Z$ but not in $Y$:
$$l_{f \co X\to M,\rel(Y)}(Z)=\sum_{e\in Z^{(1)}\setminus Y^{(1)}}l_{f \co X\to
M,\rel(Y)}(e).$$
Observe that $l_{f \co X\to M,\rel(X^{(0)})}(X)= l_{f \co X\to M}(X)$.

The most important observation is the following proposition.

\begin{prop}\label{bounded-chains}
There is $L$ such that whenever $M$ is a closed hyperbolic $3$--manifold and
$f \co X\to M$ is a minimal length carrier graph then there is a chain of
subgraphs
$$X^{(0)}=Y_0\subset Y_1\subset\dots\subset Y_k=X$$
with $l_{f \co X\to M,\rel(Y_{i-1})}(Y_i)\le L$ for all $i=1,\dots,k$.
\end{prop}

The idea behind \fullref{bounded-chains} is that in the proof of
\fullref{lem:short-edge} one can replace the vertices of $\tilde X$ by
convex subsets. See Souto \cite{rankII} for details.

\section{The rank of the fundamental group of simple complicated mapping
tori}\label{sec:rank1}
Recall that the {\em rank\/} of a finitely generated group $G$ is the minimal
number of elements needed to generate $G$. While in general the rank of
even a hyperbolic group is not computable (Rips \cite{Rips}), the situation
changes if one is interested in those groups arising as the fundamental
group of a closed 3--manifold.

\begin{sat}{\rm Kapovich--Weidmann
\cite{Kapovich-Weidmann-rank}} \qua\label{Kapovich-Weidmann}
There exists an algorithm which, given a finite presentation of the
fundamental group of a hyperbolic 3--manifold, finds the rank of $G$.
\end{sat}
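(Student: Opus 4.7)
The plan is to combine the carrier graph technology of the previous section with a Nielsen reduction procedure, converting the search for the rank into a finite, effectively enumerable problem. Since the given presentation exhibits $\pi_1(M)$ as generated by some $N$ elements, we have $\rank(\pi_1(M)) \le N$, so it suffices to decide, for each $n \in \{1,\dots,N\}$, whether $\pi_1(M)$ admits a generating tuple of cardinality $n$; the rank is then the smallest such $n$.

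Fix $n$. By \fullref{Nielsen}, Nielsen equivalence classes of generating $n$-tuples are in bijection with equivalence classes of carrier graphs $f\co X \to M$ where $X$ is a wedge of $n$ circles. By \fullref{minimal} each equivalence class has a minimal length representative which is trivalent, with geodesic edges meeting at angles $\tfrac{2\pi}{3}$. By \fullref{bounded-chains}, such a minimal carrier graph admits a chain $X^{(0)}=Y_0\subset Y_1\subset\cdots\subset Y_k=X$ with $l_{f\co X\to M,\rel(Y_{i-1})}(Y_i)\le L$ for a universal constant $L$. Unwinding this chain inductively, and using the universal bound on the injectivity radius provided by \fullref{white-inj} (which guarantees a definite amount of geometry available near every vertex), yields an a priori bound $L_n$ on the total length of some representative of each Nielsen equivalence class of generating $n$-tuples. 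Translating lengths of geodesic loops to word lengths in the given presentation, this produces a computable bound on the word length of some generating $n$-tuple in each Nielsen class.

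Given the bound $L_n$, the algorithm enumerates all $n$-tuples of words of length at most $L_n$ in the given presentation and, for each candidate $(g_1,\dots,g_n)$, decides whether it generates $\pi_1(M)$ by checking whether each of the $N$ original generators lies in $\langle g_1,\dots,g_n\rangle$. This is a uniform subgroup membership problem, which is solvable for torsion-free word-hyperbolic groups via the solvability of the word problem (Dehn's algorithm) together with systematic enumeration, terminating because quasi-geodesics in a hyperbolic group diverge at a computable rate. The rank is then the least $n$ for which some candidate tuple passes the membership test.

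The main obstacle is effectivity: the constants $L$ appearing in \fullref{bounded-chains} and in \fullref{white-inj} come from geometric arguments involving the Margulis constant and area bounds for minimal surfaces, and a priori require knowledge of the hyperbolic metric on $M$. To extract a computable bound $L_n$ from the presentation alone, one must either (i) construct the hyperbolic metric to sufficient precision, for instance by producing a triangulation via normal surface theory and solving the associated gluing equations with Mostow rigidity guaranteeing uniqueness, or (ii) replace the geometric carrier graph picture by a purely combinatorial analogue of Stallings foldings adapted to hyperbolic groups, which is the route actually taken in \cite{Kapovich-Weidmann-rank}. Either way, turning the qualitative finiteness statement coming from \fullref{bounded-chains} into a quantitative, computable bound is where the real work lies.
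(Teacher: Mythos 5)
The paper does not actually prove this theorem; it simply cites \cite{Kapovich-Weidmann-rank} and moves on. So there is no ``paper's proof'' to compare against, and the right question is whether your sketch is sound on its own terms.

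It is not, and the central gap is a misreading of \fullref{bounded-chains}. That proposition bounds the lengths of the successive subgraphs $Y_i$ \emph{relative} to the thick convex hulls of $Y_{i-1}$; it does not bound the total length of $X$. Indeed, the paper says explicitly, right before \fullref{lem:short-edge}, that minimal carrier graphs of mapping tori $M_{\phi^n}$ of high powers of a pseudo-Anosov $\phi$ have ``huge total length'' even though their rank stays fixed at $2g+1$. So your inference that unwinding the chain yields an a priori bound $L_n$ on the total length (and hence on the word length of some generating $n$-tuple) is false, and the enumeration step of your algorithm has no computable search bound. White's \fullref{white-inj} bounds the injectivity radius of $M$ from above, not the length of edges from above, so it does not rescue the argument. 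There is a secondary issue as well: your membership test is only a semi-decision procedure unless one has a genuine solution to the generalized word problem for the subgroups in question, and for general word-hyperbolic groups this is not automatic; it works for quasiconvex subgroups, but the subgroups arising here need not be quasiconvex (surface subgroups of fibered hyperbolic $3$--manifold groups are not).

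To your credit, you flag the effectivity issue yourself and correctly name where the real proof lives: Kapovich--Weidmann do not use the hyperbolic metric or carrier graphs at all, but a combinatorial Nielsen-type reduction built on Stallings foldings and Sela's acylindrical accessibility, applied in the setting of $3$--manifold groups. That route sidesteps both problems above. It is also consistent with the remark the paper makes immediately after the theorem, namely that no a priori complexity bounds are available for the Kapovich--Weidmann algorithm: their argument is effective in the sense of terminating, but the constants are not computed from the presentation in a way that would let you pre-bound the search as you propose.
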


However, it is not possible to give a priori bounds on the complexity of
the algorithm provided by the Kapovich--Weidmann theorem and hence it seems
difficult to use it directly to obtain precise results in concrete
situations.

Here, we show how to derive from the results in the previous section the
following theorem analogous to \fullref{genus-mapping1}.

\begin{sat}[Souto \cite{Souto-rank}]\label{rank-mapping1}
Let $\Sigma_g$ be the closed surface of genus $g\ge 2$,
$\phi\in\Map(\Sigma_g)$ a pseudo-Anosov mapping class and $M_{\phi^n}$ the
mapping torus of $\phi^n$. There is $n_\phi$ such that for all $n\ge
n_\phi$
$$\rank(\pi_1(M_{\phi^n}))=2g+1.$$
Moreover for any such $n$ any generating set of $\pi_1(M_{\phi^n})$ with
minimal cardinality is Nielsen equivalent to an standard generating set.
\end{sat}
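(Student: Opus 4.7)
The upper bound $\rank(\pi_1(M_{\phi^n})) \le 2g+1$ is immediate: the mapping torus is an HNN extension of $\pi_1(\Sigma_g)$, so any $2g$--element generating set for the fiber together with the stable letter generates $\pi_1(M_{\phi^n})$. All the work is in the lower bound, and it runs in close parallel to the proof of \fullref{genus-mapping1}, with minimal length carrier graphs and \fullref{bounded-chains} playing the roles that minimal surfaces and the bounded diameter lemma played there.

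Suppose, toward contradiction, that there is a sequence $n_i \to \infty$ with $\rank(\pi_1(M_{\phi^{n_i}})) \le 2g$. For each $i$, pick a generating set of minimal cardinality and, via \fullref{minimal}, an associated minimal length carrier graph $f_i \colon X_i \to M_{\phi^{n_i}}$. Each $X_i$ is trivalent with at most $3(2g-1)$ edges, and $f_i$ sends edges to geodesic segments meeting at angle $2\pi/3$. Applying \fullref{bounded-chains} we obtain chains
$$X_i^{(0)} = Y_0^i \subset Y_1^i \subset \dots \subset Y_{k_i}^i = X_i$$
with relative length at most $L$ at every step and with $k_i$ bounded by a constant $K = K(g)$. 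I would then show by induction on $j$ that the image $f_i(Y_j^i) \subset M_{\phi^{n_i}}$ has diameter, relative to the Margulis thin part, bounded by a constant $D_j = D_j(g)$ independent of $i$. In the inductive step, the relative length bound places $f_i(Y_j^i)$ in a uniformly bounded neighborhood of the projection of $TCH(\tilde Y_{j-1}^i)$ to $M_{\phi^{n_i}}$.

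The main obstacle is controlling that projection, and this is where tameness and the \hyperlink{CThe}{Covering Theorem} are essential. For $j < k_i$ the subgroup $\Gamma_j^i = (f_i)_*(\pi_1(Y_j^i))$ is a finitely generated free subgroup of $\pi_1(M_{\phi^{n_i}})$ and, after a small argument to ensure proper containment at each step, has infinite index. Since $M_{\phi^{n_i}}$ is closed and cusp--free, \fullref{tameness} together with the \hyperlink{CThe}{Covering Theorem} forces $\Gamma_j^i$ to be convex-cocompact, so its convex core has diameter (relative to the thin part) bounded by a constant depending only on its rank, hence only on $g$. Pushing this through the induction all the way to $j = k_i$, where $\Gamma_{k_i}^i = \pi_1(M_{\phi^{n_i}})$, yields that $f_i(X_i)$ itself has diameter relative to the thin part of $M_{\phi^{n_i}}$ bounded by some $D = D(g)$.

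Since $\pi_1(M_{\phi^{n_i}})$ is carried by $f_i$, every fiber $\Sigma_g \subset M_{\phi^{n_i}}$ is homotopic into $f_i(X_i)$ and hence lies within bounded relative distance of every other fiber. But exactly as in the proofs of \fullref{genus-mapping1} and \fullref{genus-mapping2}, when $n_i \to \infty$ the translation length of $\phi^{n_i}$ on $\CC(\Sigma_g)$ tends to infinity and $M_{\phi^{n_i}}$ contains fibers at arbitrarily large distance with respect to $d_{\rel M^{<\mu}}$, a contradiction. Hence $\rank(\pi_1(M_{\phi^n})) = 2g + 1$ for all $n \ge n_\phi$. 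For Nielsen uniqueness, the same geometric control applied to any minimal carrier graph forces the graph to decompose essentially across a single fiber, so by \fullref{Nielsen} the generating set must be Nielsen equivalent to the standard one consisting of a free basis of $\pi_1(\Sigma_g)$ together with the stable letter.
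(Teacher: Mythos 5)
Your overall blueprint — minimal length carrier graphs, \fullref{bounded-chains}, tameness and the \hyperlink{CThe}{Covering Theorem} — is the same machinery the paper uses, and your remark that every free proper subgraph has injective image by minimality is correct. But there are two genuine gaps.

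The first and more serious one is your inductive diameter bound. You assert that because $\Gamma_j^i$ is convex-cocompact and free of rank $<2g$, ``its convex core has diameter (relative to the thin part) bounded by a constant depending only on its rank, hence only on $g$.'' This is false as stated: the diameter of the convex core of a rank--$r$ Schottky subgroup of a thick closed hyperbolic 3--manifold is not controlled by $r$ and the injectivity radius alone (two short loxodromics whose axes sit far apart already give rank two and arbitrarily large convex core). What the paper actually uses, and what you need, is compactness. Because $\pi_1(\Sigma_g)$ is normal of infinite index in $\pi_1(M_{\phi^{n_i}})$ and the infinite cyclic cover $M'$ is the \emph{same} doubly degenerate manifold for all $n$, a component $\bar Y$ of $Y_{n_i}$ with uniformly bounded length can be lifted to $M'$; after shifting by a deck transformation and passing to a subsequence, the bounded-length graphs in $M'$ converge, the subgroups $\Gamma_{\bar Y_{n_i}}\le\pi_1(\Sigma_g)$ actually stabilize, and \fullref{ugly} makes them convex-cocompact. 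Only for that fixed subgroup do you get a bound on the thick convex-hull, and it is a bound obtained by compactness, not a function of $g$. Notice also that the quantity propagated should be total length of $Y_j^i$, not just diameter: bounded diameter does not bound the displacement $d(\tilde p,\gamma\tilde p)$ of the generating loops, and it is that displacement together with the stabilized subgroup that controls the thick convex-hull.

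The second gap is the final contradiction, though this one is repairable. A fiber surface is never homotopic into a graph, so ``every fiber is homotopic into $f_i(X_i)$'' does not parse. The correct statement, and the one you actually need, is that $f_i(X_i)$ must \emph{meet} every fiber: $M_{\phi^{n_i}}\setminus\Sigma_g$ is a product with $\pi_1\cong\pi_1(\Sigma_g)$, so a connected carrier graph disjoint from $\Sigma_g$ would have image inside the fiber subgroup, contradicting surjectivity. With that fix the ``fibers at bounded relative distance'' conclusion does follow from a uniform diameter bound on $f_i(X_i)$. But such a uniform bound is exactly what the first gap leaves unproved, and the paper in fact finishes differently: it shows that the maximal bounded-length subgraph $Y_n$ can be enlarged at bounded cost via the inequality $l(e)\le D'+l_{\rel(Y_n)}(e)$ combined with \fullref{bounded-chains}, contradicting maximality of $Y_n$, and on the constructive side identifies a component $\hat Y_n$ whose image carries $\pi_1(\Sigma_g)$, from which $2g\le\rank(\pi_1(\hat Y_n))<\rank(\pi_1(M_n))\le 2g+1$ immediately. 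Your pure proof-by-contradiction framework obscures that it is exactly the \emph{failure} of convex-cocompactness (degeneration towards the fiber group) that one wants to locate.
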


Observe that by construction we have $\pi_1(M_\phi)=\pi_1(\Sigma_g)*_\BZ$
and hence, considering $2g$ generators of $\pi_1(\Sigma_g)$ and adding a
further element corresponding to the HNN-extension we obtain generating
sets of $\pi_1(M_\phi)$ with $2g+1$ elements. These are the so-called {\em
standard\/} generating sets.

Before going further, we would like to remark that recently an extension of
\fullref{rank-mapping1} has been obtained by Ian Biringer.

\begin{sat}[Biringer]\label{rank-mapping2}
For every $g$ and $\epsilon$ there are at most finitely many hyperbolic
3--manifolds $M$ with $\inj(M)\ge\epsilon$, fibering over the circle with
fiber $\Sigma_g$ and with $\rank(M)\neq 2g+1$.
\end{sat}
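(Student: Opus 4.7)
The plan is to argue by contradiction via a geometric limit, extracting an algebraic limit subgroup from rank-deficient carrier graphs and using the Covering Theorem to derive a contradiction with the fact that the fiber of a hyperbolic surface bundle is never convex cocompact.

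Suppose for contradiction that there is an infinite sequence of pairwise non-isometric hyperbolic 3-manifolds $M_n=M_{\phi_n}$ fibering over $S^1$ with fiber $\Sigma_g$, all satisfying $\inj(M_n)\ge\epsilon$ and $\rank(\pi_1(M_n))\le 2g$ (the bound $2g+1$ is automatic from the standard generating set). Since the $M_n$ are distinct and uniformly thick, Jorgensen--Thurston forces $\vol(M_n)\to\infty$. Pick pleated fibers $S_n\subset M_n$; by the bounded diameter lemma applied to the CAT$(-1)$ surfaces $S_n$, their diameters are bounded in terms of $g$ and $\epsilon$. Choosing basepoints $p_n\in S_n$ and passing to a subsequence, $(M_n,p_n)$ converges geometrically to a pointed hyperbolic manifold $(M_\infty,p_\infty)$ of infinite volume with $\inj(M_\infty)\ge\epsilon$, and via the almost isometric maps of geometric convergence the surfaces $S_n$ limit to a $\pi_1$-injective pleated surface $S_\infty\subset M_\infty$ of genus $g$.

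Let $f_n\colon X_n\to M_n$ be minimal length carrier graphs, which by Lemma \ref{minimal} are trivalent with at most $3(2g-1)$ edges. Proposition \ref{bounded-chains} supplies a filtration
\[X_n^{(0)}\subset Y_1^n\subset\cdots\subset Y_{k_n}^n=X_n\]
whose consecutive relative lengths are bounded by a universal constant $L$. After free homotopy placing a vertex of $Y_1^n$ at $p_n$, the bounded subgraph $Y_1^n$ passes through the almost isometric maps of geometric convergence to a finite graph in $M_\infty$; iterating this on each piece $Y_i^n\setminus Y_{i-1}^n$ (re-basing as necessary to keep the thick convex hulls of the attaching subgraphs within view of $p_\infty$), one extracts an algebraic limit subgroup $\Gamma_\infty\subset\pi_1(M_\infty)$ generated by at most $2g$ elements.

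The main technical obstacle is to show $\Gamma_\infty=\pi_1(M_\infty)$ and hence $\rank(\pi_1(M_\infty))\le 2g$: geometric convergence does not in general preserve rank, and the chain structure of Proposition \ref{bounded-chains} is precisely what allows one to track how long edges of $X_n$ run off into ends of $M_\infty$ without creating new generators in the limit. Granting this, $M_\infty$ is an infinite-volume hyperbolic 3-manifold of rank $\le 2g$ containing the incompressible genus-$g$ surface $S_\infty$, whose subgroup itself has rank $2g$. By Theorem \ref{tameness} and a Covering Theorem analysis of the cover of $M_\infty$ corresponding to $\pi_1(S_\infty)$, $S_\infty$ must bound a convex cocompact end of $M_\infty$; the residual doubly degenerate alternative is ruled out by a separate rank-lower-bound argument on the $M_n$ side, which shows that a doubly degenerate limit would force $\rank(\pi_1(M_n))=2g+1$.

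Finally, Lemma \ref{super-ugly} applied to this convex cocompact end implies that for all large $n$ the almost isometric map realizes $\pi_1(S_\infty)$ as a convex cocompact subgroup of $\pi_1(M_n)$. Via the convergence $S_n\to S_\infty$, this subgroup is precisely the fiber subgroup $\pi_1(\Sigma_g)\subset\pi_1(M_n)$; but in a hyperbolic mapping torus the fiber cover is doubly degenerate, so the fiber subgroup is never convex cocompact, a contradiction. The thickness hypothesis $\inj(M_n)\ge\epsilon$ is used both to ensure the existence of the geometric limit and, more essentially, in the rank-transfer step outlined above, which is the heart of the argument.
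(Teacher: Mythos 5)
The paper attributes this theorem to Biringer and gives no proof; the nearest argument is its sketch of Theorem~\ref{rank-mapping1}, which is the natural template here. Your setup (minimal carrier graphs, Proposition~\ref{bounded-chains}, geometric limits, the Covering Theorem) follows that template correctly, but the endgame is both factually wrong and circular.

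The gap is your claim that $S_\infty$ bounds a convex cocompact end of $M_\infty$, with the ``residual doubly degenerate alternative'' dismissed by an unspecified ``separate rank-lower-bound argument on the $M_n$ side.'' In fact the geometric limit is \emph{always} doubly degenerate in this setting: since $\inj(M_n)\ge\epsilon$ and $\vol(M_n)\to\infty$, the diameters diverge, and through every point of $M_n$ passes a $CAT(-1)$ surface homotopic to the fiber of diameter bounded only in terms of $g$ and $\epsilon$; these pass to the limit and exit both ends of $M_\infty\cong\Sigma_g\times\BR$, so both ends are simply degenerate. Hence $\pi_1(M_\infty)=\pi_1(S_\infty)=\pi_1(\Sigma_g)$, there is no convex cocompact end, and Lemma~\ref{super-ugly} cannot be applied to $\pi_1(S_\infty)$. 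The ``separate argument'' you invoke to exclude the doubly degenerate case is precisely the statement being proved, so the argument begs the question. For the same reason the ``rank transfer'' you flag as the main obstacle is the wrong target: $\rank(\pi_1(M_\infty))=2g$ holds automatically, with nothing to transfer.

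The fix is to run the argument of Theorem~\ref{rank-mapping1} with the doubly degenerate limit $M_\infty$ playing the role of the infinite cyclic cover $M'$, applying the Covering Theorem to the \emph{subgraphs} $Y_n$ rather than to $S_\infty$. From Proposition~\ref{bounded-chains} choose subgraphs $Y_n\subset X_n$ of uniformly bounded length, maximal with that property, and push them through the almost isometric maps of geometric convergence. If the images of the components of $\pi_1(Y_n)$ in $\pi_1(\Sigma_g)$ are eventually surjective, then $\rank(\pi_1(Y_n))\ge 2g$ forces $Y_n=X_n$ and $\pi_1(X_n)$ generates only the infinite-index fiber subgroup of $\pi_1(M_n)$, a contradiction. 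Otherwise the images are proper finitely generated subgroups of $\pi_1(\Sigma_g)$, necessarily of infinite index (finite index would have rank $>2g$), hence free and, by Corollary~\ref{ugly}, convex cocompact in $M_\infty$; Lemma~\ref{super-ugly} then pulls their thick convex hulls back to $M_n$ with uniformly bounded diameter, and the estimates \eqref{length-rellength1}--\eqref{length-rellength2} contradict the maximality of $Y_n$. Your proposal has all the right ingredients but applies the Covering Theorem to the one surface subgroup that is never convex cocompact.
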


\begin{proof} {\rm Sketch proof of \fullref{rank-mapping1}} \qua
Given some $\phi$
pseudo-Anosov, let $M_n=M_{\phi^n}$ be the mapping torus of $\phi^n$. Let
also $f_n \co X_n\to M_n$ be a minimal length carrier graph for each $n$.
Moreover, let $Y_n\subset X_n$ be a sequence of subgraphs of $X_n$ with the
following properties.
\begin{itemize}
\item There is some constant $C$ with $l_{f_n \co X_n\to M_n}(Y_n)\le C$ for
all $n$.
\item If $Y_n'\subset X_n$ is a sequence of subgraphs of $X_n$ properly
containing $Y_n$ for all $n$ then $\lim_nl_{f_n \co X_n\to M_n}(Y_n')=\infty$.
\end{itemize}
We claim that for all sufficiently large $n$ the graph $Y_n$ has a
connected component $\hat Y_n$ such that the image of $\pi_1(\hat Y_n)$ in
$\pi_1(M_n)$ generates the fundamental group of the fiber. In particular we
have
$$2g\le\rank(\pi_1(\hat Y_n))<\rank(\pi_1(M_n))\le 2g+1.$$
The claim of \fullref{rank-mapping1} follows.

Seeking a contradiction assume that the subgraphs $\hat Y_n$ don't exist
for some subsequence $(n_i)$ and let $\wbar Y_{n_i}$ be a sequence of
connected components in $Y_{n_i}$. Since the graph $Y_{n_i}$ has at most
length $C$ we may assume, up to forgetting finitely many, that for all $i$
the graph $\wbar Y_{n_i}$ lifts to the infinite cyclic cover $M'$ of
$M_{n_i}$. Moreover, the space of graphs in $M'$ with bounded length is, up
to the natural $\BZ$ action, compact. In particular, we may assume, up to
taking a further subsequence and perhaps shifting our lift by a
deck-transformation, that the images of $\pi_1(\wbar Y_{n_i})$ and
$\pi_1(\wbar Y_{n_j})$ coincide for all $i$ and $j$.

On the other hand, by assumption $\pi_1(\wbar Y_{n_i})$ does not generate
$\pi_1(M')=\pi_1(\Sigma_g)$. Moreover, since every proper finite index
subgroup of $\pi_1(M')$ has rank larger than $2g$, we have that the image
of $\pi_1(\wbar Y_{n_i})$ in $\pi_1(M')$ has infinite index and hence is a
free group. This implies that the homomorphism $\pi_1(\wbar Y_{n_i})\to
\pi_1(M')$ is injective for otherwise we would be able to find carrier
graphs for $M_n$ with rank less than $\rank(\pi_1(X_n))$ (compare with the
remark following \fullref{Nielsen}). Thurston's \hyperlink{CThe}{Covering Theorem} (compare
with \fullref{ugly}) implies now that in fact the image of $\pi_1(\wbar
Y_{n_i})$ is convex-cocompact. In particular, the quotient under
$\pi_1(\wbar Y_{n_i})$ of its thick-convex-hull has bounded diameter. Then,
minimality of the graph $f_{n_i} \co X_{n_i}\to M_{n_i}$ implies that there is
some $D$ such that for every edge $e$ of $X_{n_i}$ one has
\begin{equation}\label{length-rellength1}
l_{f_{n_i} \co X_{n_i}\to M_{n_i}}(e)\le D+l_{f_{n_i} \co X_{n_i}\to
M_{n_i},\rel(\wbar Y_i)}(e).
\end{equation}
Equation \eqref{length-rellength1} applies by assumption to all components
$\wbar Y_i$ of $Y_i$. In particular we obtain some $D'$ depending on $D$ and
the maximal number of components, ie of $D$ and $g$, such that
\begin{equation}\label{length-rellength2}
l_{f_{n_i} \co X_{n_i}\to M_{n_i}}(e)\le D'+l_{f_{n_i} \co X_{n_i}\to
M_{n_i},\rel(Y_i)}(e).
\end{equation}
Equation \eqref{length-rellength2} contradicts \fullref{bounded-chains}.
This concludes the (sketch of the) proof of \fullref{rank-mapping1}.
\end{proof}

\section{Another nice family of manifolds}\label{sec:hossein}
Until now, we have mostly considered 3--manifolds arising as a mapping
torus. Mapping tori are particularly nice 3--manifolds whose construction is
also particularly simple to describe. But this is not the real reason why
we were until now mainly concerned with them. The underlying geometric
facts needed in the proofs of \fullref{genus-mapping1} and
\fullref{rank-mapping1} are the following:
\begin{itemize}
\item Thurston's theorem asserting that the mapping torus $M_\phi$ of a
pseudo-Anosov mapping class $\phi$ admits a hyperbolic metric and
\item the fact that every geometric limit of the sequence $(M_{\phi^n})_n$
is isometric to the infinite cyclic cover corresponding to the fiber.
\end{itemize}
In some cases, for instance in the proof of \fullref{genus-mapping1}, one
does not need the full understanding of the possible geometric limits. But
still, without understanding enough of the geometry it is not possible,
using our methods, to obtain topological facts. In this section we discuss
results obtained by Hossein Namazi and the author \cite{Namazi-Souto}
concerning a different family of 3--manifolds.

Let $M^+$ and $M^-$ be $3$-dimensional handlebodies of genus $g>1$ with
homeomorphic boundary $\bnd M^+=\bnd M^-$. Given a mapping class
$f\in\Map(\bnd M^+)$ we consider the closed, oriented 3--manifold
$$N_f=M^+\cup_f M^-$$
obtained by identifying the boundaries of $M^+$ and $M^-$ via $f$. By
construction, the manifold $N_f$ has a {\em standard\/} Heegaard splitting of
genus $g$. Also, generating sets of the free groups $\pi_1(M^\pm)$ generate
$\pi_1(N_f)$ and hence $\rank(\pi_1(N_f))\le g$. We will be interested in
those manifolds $N_{f^n}$ where the gluing map is a high power of a
sufficiently complicated mapping class hoping that if this is the case,
then
$$g(N_{f^n})=\rank(\pi_1(N_{f^n}))=g.$$
However, we must be a little bit careful with what we mean under
``complicated mapping class". The problem is that it does not suffice to
assume that $f$ is pseudo-Anosov because there are homeomorphisms $F$ of
$M^+$ which induce pseudo-Anosov mapping classes $f$ on $\bnd M^+$ and for
any such map we have $N_f=N_{f^n}$ for all $n$. For example, $N_{f^n}$
could be the 3--sphere for all $n$. In particular, we have to rule out that
$f$ extends to either $M^+$ or $M^-$. In order to give a precise sufficient
condition recall that every pseudo-Anosov map has an stable lamination
$\lambda^+$ and an unstable lamination $\lambda^-$. If $f$ extends to a
homeomorphism of $M^+$ then it maps meridians, ie essential simple closed
curves on $\bnd M^+$ which are homotopically trivial in $M^+$, to
meridians; in particular, $\lambda^+$ is a limit in the space $\CP\CM\CL$
of measured laminations on $\bnd M^+$ of meridians of $M^+$. With this in
mind, we say that a pseudo-Anosov mapping class $f$ on $\bnd M^+=\bnd M^-$
is {\em generic\/} if the following two conditions hold:
\begin{itemize}
\item the stable lamination is not a limit of meridians in $\bnd M^+$ and
\item the unstable lamination is not a limit of meridians in $\bnd M^-$.
\end{itemize}
The term {\em generic\/} is justified because Kerckhoff \cite{Ker90} proved
that the closure in $\CP\CM\CL$ of the set of meridians of $M^+$ and $M^-$
have zero measure with respect to the canonical measure class of
$\CP\CM\CL$. Moreover, it is not difficult to construct examples of generic
pseudo-Anosov maps by hand.
%

In this section we consider manifolds $N_{f^n}=M^+\cup_{f^n} M^-$ obtained
by gluing $M^+$ and $M^-$ by a high power of a generic pseudo-Anosov
mapping class.

We should point out that the above construction is due to Feng Luo by
using an idea of Kobayashi: the manifolds $N_{f^n}$ are also
interesting because the standard Heegaard splittings can have
arbitrarily large distance in the curve complex. In particular, by
\fullref{Hempel}, the manifold $N_{f^n}$ is irreducible and atoroidal
for all sufficiently large $n$. Hence, it should be hyperbolic. In
fact, according to Perelman's \hyperlink{HypTh}{Hyperbolization Theorem} the manifold
$N_{f^n}$ is hyperbolic provided that its fundamental group is not
finite. To check that this is the case ought to be easy... one
thinks. It is not. The following is the simplest "topological" proof
known to the author that $N_{f^n}$ is not simply connected for large
$n$.

\begin{lem}\label{not-sphere}
For all sufficiently large $n$ the manifold $N_{f^n}$ is not simply
connected.
\end{lem}
\begin{proof}
By the proof of the Poincar\'e conjecture, also by Perelman, it suffices to
prove that $N_{f^n}$ is not the sphere $\BS^3$. However, by Waldhausen's
classification of the Heegaard splittings of the sphere, there is only one
for each $g$. And this one is reducible and hence has distance $0$ in the
curve-complex. The standard Heegaard splitting of $N_{f^n}$ has, for large
$n$, large distance and therefore $N_{f^n}\neq\BS^3$.
\end{proof}

Using the classification of the Heegaard splittings of the lens spaces one
can also check that for large $n$ the manifold $N_{f^n}$ is not a lens
space. Probably something similar can be made to rule out every other
spherical manifold. So, using all these classification theorems for
Heegaard splittings and the geometrization conjecture one finally obtains
that the fundamental group of $N_{f^n}$ is infinite.

The discussion that we just concluded shows how difficult is to prove
anything using topological methods for the manifolds $N_{f^n}$. Even
considering the proof of the geometrization conjucture to be {\rm
topological\/}.

In Namazi--Souto \cite{Namazi-Souto} we don't show that $N_{f^n}$ is hyperbolic but we
construct, by gluing known hyperbolic metrics on $M^+$ and $M^-$, explicit
negatively curve metrics on $N_{f^n}$ for large $n$.

\begin{sat}\label{main}
For arbitrary $\epsilon>0$ there is $n_\epsilon$ such that the manifold
$N_{f^n}$ admits a Riemannian metric $\rho_n$ with all sectional curvatures
pinched by $-1-\epsilon$ and $-1+\epsilon$ for all $n\geq n_\epsilon$.
Moreover, the injectivity radius of the metric $\rho_n$ is bounded from
below independently of $n$ and $\epsilon$.
\end{sat}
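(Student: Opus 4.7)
The plan is to build $\rho_n$ by cutting $N_{f^n}$ along two parallel copies of the Heegaard surface and filling three pieces: an honest hyperbolic metric on each handlebody and a nearly hyperbolic \emph{neck} whose geometry mimics a long piece of the infinite cyclic cover $M_f'$ of the mapping torus of $f$. The generic hypothesis on $f$ is precisely what permits these three pieces to be chosen so that they agree $C^\infty$--closely across the cutting surfaces.

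First I would apply Thurston's hyperbolization to $M_f$ and pass to $M_f'\cong\bnd M^+\times\BR$, which is doubly degenerate with ending laminations $\lambda^+$ and $\lambda^-$. By Ahlfors--Bers there are convex cocompact hyperbolic structures $\rho^+_X$ on $M^+$ parametrized by $X$ in the Teichm\"uller space of $\bnd M^+$, and analogously $\rho^-_Y$ on $M^-$. The hypothesis that $\lambda^+$ is not a limit of meridians of $M^+$ in $\CP\CM\CL(\bnd M^+)$, together with the double-limit machinery (Thurston, Canary, Kleineidam--Souto), ensures that as $X\to\lambda^+$ along the orbit $(f^n(X_0^+))_n$ the structures $\rho^+_X$ converge algebraically to a singly degenerate hyperbolic structure $\rho^+_\infty$ on $M^+$ whose degenerate end has ending lamination $\lambda^+$; a symmetric limit $\rho^-_\infty$ on $M^-$ comes from the mirror hypothesis.

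Next I would invoke the bi-Lipschitz model theorem of Minsky and of Brock--Canary--Minsky: the degenerate end of $(M^+,\rho^+_\infty)$ is bi-Lipschitz to an end of $M_f'$, with constants depending only on the topology and on a Margulis lower bound for $\inj$. Fixing $X_0^\pm$ in a compact subset of Teichm\"uller space and setting $X_n^+=f^n(X_0^+)$, for all sufficiently large $n$ the structure $\rho^+_{X_n^+}$ is $C^\infty$--close on a long collar of $\bnd M^+$ to a long piece of $M_f'$, and likewise on the $-$ side. Under these identifications the gluing $f^n$ corresponds to the $n$--th iterate of the generator of the deck transformation group of $M_f'\to M_f$, so the two collars appear as overlapping translates of the same subset of $M_f'$. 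The metric $\rho_n$ is then produced by a smooth partition-of-unity interpolation of the two hyperbolic collar metrics over a thin central band; since the interpolands are $C^\infty$--close there, the sectional curvatures of $\rho_n$ lie in $(-1-\epsilon,-1+\epsilon)$ for $n$ large.

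The uniform lower bound on $\inj(\rho_n)$ is inherited from the models: the degenerate end of each $\rho^\pm_\infty$ has $\inj\ge\inj(M_f)>0$ via the bi-Lipschitz identification, while the thick part of $(M^\pm,\rho^\pm_\infty)$ has a uniform positive $\inj$ as long as $X_0^\pm$ is chosen in a fixed compact set. The hard step will be the second paragraph above---producing the singly degenerate algebraic limits $\rho^\pm_\infty$ and matching their geometry to $M_f'$ precisely enough to interpolate with controlled curvature---both of which rely essentially on the genericity hypothesis (without which $\lambda^\pm$ could lie in the closure of meridians and the double-limit argument would collapse) and on the full strength of the Minsky--Brock--Canary model theorem. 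The construction thus does not sidestep any hard analysis; it packages it as a curvature-pinched gluing on the closed manifold $N_{f^n}$.
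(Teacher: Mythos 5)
Your strategy tracks the paper's sketch closely: build singly degenerate hyperbolic structures $\rho^\pm_\infty$ on $M^\pm$ whose degenerate ends have ending laminations $\lambda^\pm$ (this is exactly what the paper draws from Kleineidam--Souto \cite{KS02}, which you also invoke), observe that these ends approximate long pieces of the infinite cyclic cover $M_f'$, and produce $\rho_n$ by a convex combination---your partition of unity---of the two hyperbolic metrics over a gluing band pushed far out by the power $n$. The role of $n$, the use of the genericity hypothesis, and the source of the injectivity-radius bound are all correctly placed.

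The one genuine misstep is where you cite the Minsky and Brock--Canary--Minsky model theorem for the key estimate. That theorem gives a bi-Lipschitz equivalence, with a uniform constant, between ends carrying the same ending lamination. But bi-Lipschitz is a $C^0$ statement about the metric tensor: it constrains lengths, not second derivatives, and hence says nothing about sectional curvature. An $L$-bi-Lipschitz perturbation of a hyperbolic metric need not be negatively curved at all. What your interpolation actually requires is $C^2$-closeness of the two metrics on the gluing band---you yourself write ``$C^\infty$-close''---and that must be extracted from geometric convergence, not from the ending lamination theorem. The point is that, as one exits the degenerate end of $\rho^+_\infty$, pointed geometric limits of the manifold converge to $M_f'$, and geometric convergence yields local $C^\infty$-convergence of the metric tensor. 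This is what the paper means by ``asymptotically isometric,'' a strictly stronger conclusion than the bi-Lipschitz control provided by the model theorem, and it is neither implied by nor equivalent to it. Once you replace the model-theorem citation by the geometric-limit argument, the remainder of the proposal goes through as intended.
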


The idea behind the proof of \fullref{main} can be summarized as follows:
By the work of Kleineidam and the author \cite{KS02} there are two
hyperbolic manifolds homeomorphic to $M^+$ and $M^-$ which have
respectively ending laminations $\lambda^+$ and $\lambda^-$. The ends of
these two hyperbolic 3--manifolds are asymptotically isometric. In
particular, it is possible to construct $N_{f^n}$, for large $n$, by gluing
compact pieces of both manifolds by maps very close to being an isometry.
On the gluing region, a convex-combinations of both hyperbolic metrics
yields a negatively curved metric.

\begin{figure}[ht!]
\begin{center}
\includegraphics[scale=0.4,angle=270]{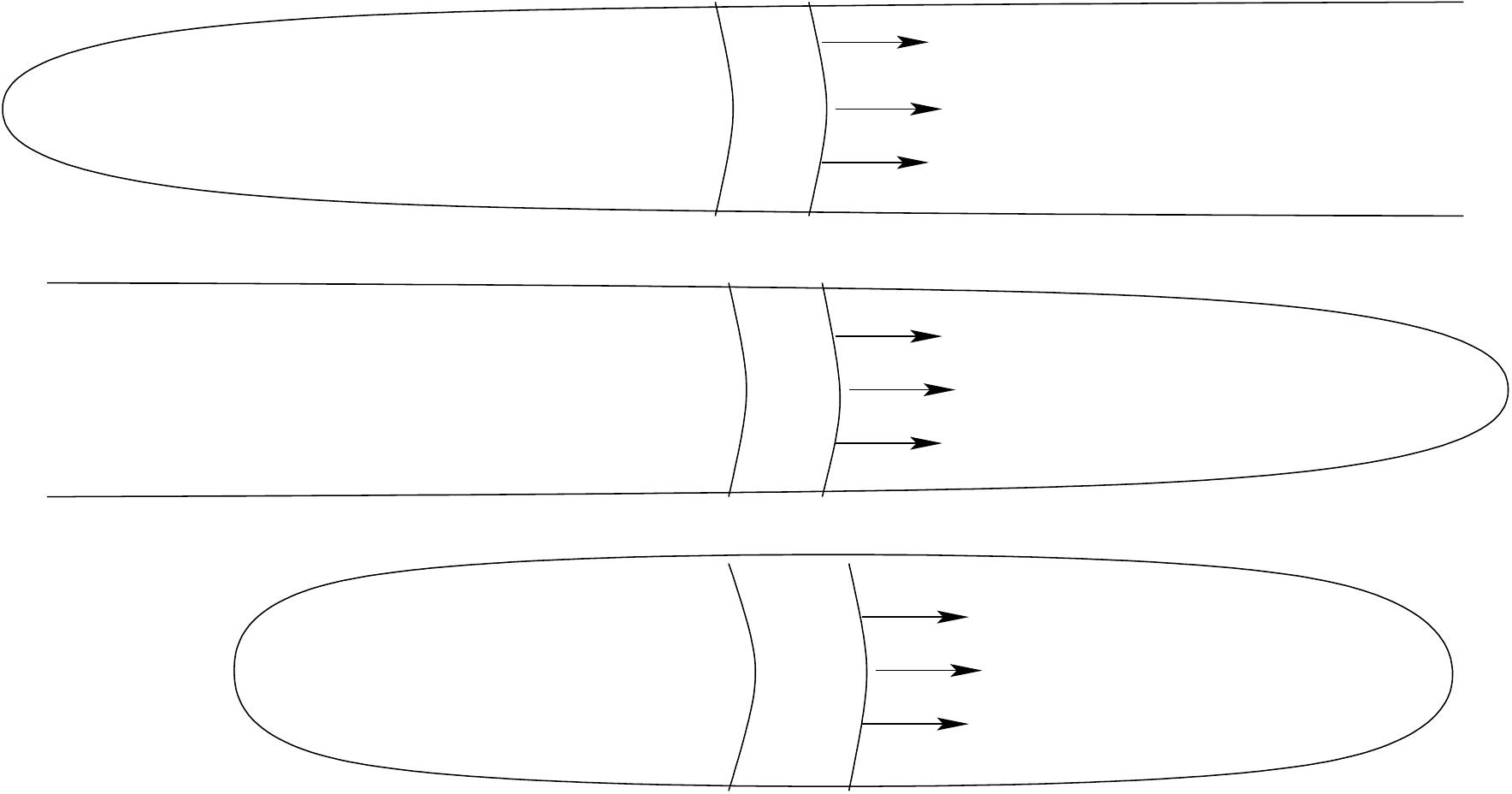}
\end{center}
\caption{Proof of \fullref{main}: The two open manifolds $M^+$ and $M^-$
being glued along the gluing region to obtain the manifold $N_{f^n}$.}
\label{fig:4}
\end{figure}

Apart from providing a negatively curved metric, the key point to all
further results, is that the metrics $\rho_n$ are {\em explicits\/}. In
particular, as one sees from the sketch of the construction above, one has
automatically a complete understanding of the possible geometric limits of
the sequence $(N_{f^n},\rho_n)$. We obtain for example the following theorem.

\begin{sat}\label{geometric limit}
Every geometric limit of the sequence $(N_{f^n})_n$ is hyperbolic and is
either homeomorphic to a handlebody of genus $g$ or to a trivial interval
bundle over a closed surface of genus $g$.
\end{sat}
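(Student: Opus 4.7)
My plan is to read the topology of any geometric limit directly off the explicit construction of the metrics $\rho_n$ from \fullref{main}. Recall that $(N_{f^n},\rho_n)$ is assembled from three pieces: two compact submanifolds $K_n^\pm$ that are $(1+\eta_n)$--bi-Lipschitz to compact submanifolds of the hyperbolic handlebodies $M_\pm^{\mathrm{hyp}}$ furnished by \cite{KS02} (each with singly degenerate end and ending lamination $\lambda^\pm$), together with a gluing collar $G_n$ of \emph{bounded} thickness on which $\rho_n$ is a convex combination of the two hyperbolic metrics. As $n\to\infty$ the pieces $K_n^\pm$ exhaust $M_\pm^{\mathrm{hyp}}$ and $\eta_n\to 0$. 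Diagonalising in \fullref{main} we may also assume that the sectional curvatures of $\rho_n$ are pinched between $-1-\epsilon_n$ and $-1+\epsilon_n$ with $\epsilon_n\to 0$, so every geometric limit of the sequence is automatically a complete hyperbolic 3--manifold. Only the topological identification remains.

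Let $(N_{f^n},p_n)$ converge geometrically to $(N_\infty,p_\infty)$; this is legitimate after passing to a subsequence, thanks to the uniform injectivity radius bound in \fullref{main}. I would then split into two regimes. If $p_n$ remains at bounded distance from $G_n$, then for each $R>0$ the ball $B(p_n,R)$ in $N_{f^n}$ is almost isometric to a piece of $M_+^{\mathrm{hyp}}$ glued to a piece of $M_-^{\mathrm{hyp}}$ across a surface of bounded diameter; letting $R\to\infty$ produces a limit which is the hyperbolic gluing of the full manifolds $M_\pm^{\mathrm{hyp}}$ along a common cross-section of their degenerate ends. Since each $M_\pm^{\mathrm{hyp}}$ is a handlebody and this gluing takes place along a cross-section of the unique end, the limit is topologically a trivial interval bundle $S\times\BR$, where $S=\bnd M^+$.

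In the remaining regime $d(p_n,G_n)\to\infty$, after subsequence $p_n$ lies, say, in $K_n^+$, and the almost-isometric identification produces points $p_n'\in M_+^{\mathrm{hyp}}$ such that $(N_{f^n},p_n)$ and $(M_+^{\mathrm{hyp}},p_n')$ have the same geometric limit. If $p_n'$ stays in a compact subset of $M_+^{\mathrm{hyp}}$, the limit is $M_+^{\mathrm{hyp}}$ itself, a hyperbolic handlebody of genus $g$. If $p_n'$ exits the singly degenerate end, I would use that $\lambda^+$ is the stable lamination of the pseudo-Anosov $f$, hence $f$--invariant and of bounded combinatorics; by Minsky's model manifold machinery the end of $M_+^{\mathrm{hyp}}$ is asymptotically periodic and of bounded geometry, so any subsequential limit along $p_n'\to\infty$ is a complete hyperbolic manifold isometric to the infinite cyclic cover of the hyperbolic mapping torus $M_f$, again homeomorphic to $S\times\BR$. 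The case $p_n\in K_n^-$ is entirely symmetric.

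The main obstacle, in my view, is this last point: to rule out pathological limits when basepoints exit the degenerate ends of $M_\pm^{\mathrm{hyp}}$ one needs bounded-geometry control on those hyperbolic handlebodies, which is not part of \fullref{main} itself but rests on Minsky's analysis of singly degenerate ends with $f$--invariant ending lamination. Once this is available, the pinched-curvature control of $\rho_n$ on the gluing region, combined with the almost-isometric identifications on $K_n^\pm$, delivers the complete list of geometric limits, each of which is hyperbolic and homeomorphic either to a handlebody of genus $g$ or to $S\times\BR$.
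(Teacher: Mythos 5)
The paper offers no proof of this theorem: after sketching the construction of $\rho_n$ in \fullref{main} it simply asserts that ``one has automatically a complete understanding of the possible geometric limits,'' so your task was really to supply the argument that the paper leaves implicit, and your case analysis on the position of the base points (near the gluing collar; in $K_n^+$ or $K_n^-$ and remaining bounded; in $K_n^+$ or $K_n^-$ and escaping the end) is exactly the right framework for doing that.

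One step as written does not quite hold up. In the gluing--region case you say the limit is ``the hyperbolic gluing of the full manifolds $M_\pm^{\mathrm{hyp}}$ along a common cross-section of their degenerate ends'' and conclude that it is $S\times\BR$. But gluing two full handlebodies along a closed surface yields a closed manifold, not a trivial interval bundle, so the sentence as stated contradicts the conclusion you want. The missing observation is that, because $K_n^\pm$ exhaust $M_\pm^{\mathrm{hyp}}$, the gluing region $G_n$ recedes into the degenerate ends and the distance from $G_n$ to the compact core of either handlebody tends to infinity; consequently a pointed limit based at bounded distance from $G_n$ sees neither core, only the two half-open end neighborhoods (each $\cong S\times[0,\infty)$) joined across the collar, and \emph{that} is why the limit is $S\times\BR$. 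Once this is made explicit your argument is complete, modulo the asymptotic-periodicity input for the degenerate ends of $M_\pm^{\mathrm{hyp}}$, which you correctly single out as the real geometric content; the relevant statement (that the end with ending lamination $\lambda^\pm$ is geometrically asymptotic to the infinite cyclic cover $M_f'$) goes back to Thurston and McMullen and sits behind the paper's claim that the two ends are ``asymptotically isometric,'' rather than being part of \fullref{main} itself.
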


\fullref{main} shows that $\pi_1(N_{f^n})$ is infinite and word hyperbolic
for all large $n$. In particular, it has solvable word problem and many
other highly desirable algorithmic properties. However, this does not say
much about the specific group $\pi_1(N_{f^n})$; it just says as much as the
geometrization conjecture does. For instance, it does not explain to which
extent the homomorphism $\pi_1(M^+)\to\pi_1(N_{f^n})$ is injective.

\begin{sat}\label{easson}
If $\Gamma\subset\pi_1(M^+)$ is a finitely generated subgroup of infinite
index, then there is some $n_\Gamma$ such that for all $n\geq n_\Gamma$ the
map $\Gamma\to\pi_1(N_{f^n})$ given by the inclusion $M^+\hookrightarrow
N_{f^n}$ is injective.
\end{sat}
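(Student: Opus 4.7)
The plan is to identify $\Gamma$ with a convex-cocompact Kleinian group arising from a specific hyperbolic structure on $M^+$ that is a geometric limit of the sequence $(N_{f^n},\rho_n)$, and then invoke \fullref{super-ugly}. Since $M^+$ is a handlebody, $\pi_1(M^+)$ is free, so $\Gamma$ is itself finitely generated and free; this will put us in a convenient position to apply the results of \fullref{sec:hyp}.

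Recall from the sketch following \fullref{main} that $\rho_n$ is produced by gluing compact pieces of two specific hyperbolic manifolds $M^+_\infty$ and $M^-_\infty$, homeomorphic to the interiors of $M^+$ and $M^-$ and with ending laminations $\lambda^+$ and $\lambda^-$ respectively (these exist by Kleineidam--Souto \cite{KS02}). By construction, for every compact $K\subset M^+_\infty$ and every $\epsilon>0$, for all sufficiently large $n$ there are $(1+\epsilon)$--bi-Lipschitz embeddings $\kappa_n\co K\hookrightarrow (N_{f^n},\rho_n)$. Moreover, the explicit nature of the construction ensures that, via the identification of $M^+_\infty$ with the interior of $M^+$, the embedding $\kappa_n$ is freely homotopic to the restriction to $K$ of the natural inclusion $M^+\hookrightarrow N_{f^n}$. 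Thus, for suitable basepoints, $(N_{f^n},\rho_n)$ converges geometrically to $M^+_\infty$ and the maps induced on fundamental groups by $\kappa_n$ agree, up to inner automorphism of $\pi_1(N_{f^n})$, with the inclusion-induced homomorphism $\pi_1(M^+)\to\pi_1(N_{f^n})$.

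Since $M^+_\infty$ has no cusps and $\Gamma\subset\pi_1(M^+_\infty)=\pi_1(M^+)$ is finitely generated of infinite index, \fullref{ugly} tells us that the corresponding cover of $M^+_\infty$ is convex-cocompact; equivalently, there is a compact convex submanifold projecting to $M^+_\infty$ whose fundamental group (under the natural conjugacy) contains $\Gamma$ as a convex-cocompact subgroup. Choose a compact submanifold $K\subset M^+_\infty$ containing this convex core, so that (up to conjugation) $\Gamma$ is contained in $[\pi_1(K)]$ and is convex-cocompact therein. Applying \fullref{super-ugly} to the geometric limit $M^+_\infty$, the compact set $K$, and the sequence $(N_{f^n},\rho_n)$, we conclude that for all $n$ sufficiently large the map $(\kappa_n)_*\co [\pi_1(K)]\to\pi_1(N_{f^n})$ is injective with convex-cocompact image. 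Restricting to $\Gamma$ and combining with the compatibility noted in the previous paragraph yields that $\Gamma\to\pi_1(N_{f^n})$, induced by $M^+\hookrightarrow N_{f^n}$, is injective.

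The main obstacle is the compatibility step: one must verify that the almost-isometric embeddings $\kappa_n$ furnished by geometric convergence induce, up to an inner automorphism of $\pi_1(N_{f^n})$, the same homomorphism as the topological inclusion $M^+\hookrightarrow N_{f^n}$. This is not formal from geometric convergence alone; it uses the explicit gluing construction of $\rho_n$, in which the copy of $M^+$ sitting inside $N_{f^n}$ is literally identified, via a near-isometry, with a compact piece of $M^+_\infty$. Once this identification is set up carefully, the argument reduces to a direct application of the convex-cocompactness machinery from \fullref{sec:hyp}, and in particular does not require any new technical input beyond what is already established in \cite{Namazi-Souto}.
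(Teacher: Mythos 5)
Your argument follows the paper's sketch precisely: exhibit the Kleineidam--Souto manifold $M^+_\infty$ (a cusp-free hyperbolic structure on the handlebody with ending lamination $\lambda^+$) as a geometric limit of $(N_{f^n},\rho_n)$, apply \fullref{ugly} to conclude that the cover determined by the infinite-index subgroup $\Gamma$ is convex-cocompact, and invoke \fullref{super-ugly}, checking that the almost-isometric embeddings agree up to conjugacy with the inclusion-induced map $\pi_1(M^+)\to\pi_1(N_{f^n})$ --- exactly the point the paper flags as ``$M^+$ can be canonically identified with one of the possible geometric limits.'' One small correction in the setup: \fullref{super-ugly} requires $[\pi_1(K)]$ itself to be convex-cocompact, so $K$ should be chosen so that $[\pi_1(K)]$ is conjugate to $\Gamma$ (for instance, a regular neighborhood of an embedded graph carrying $\Gamma$), rather than ``a compact submanifold containing the convex core,'' which could have $[\pi_1(K)]$ strictly larger than $\Gamma$ and no longer convex-cocompact since $\pi_1(M^+_\infty)$ has a degenerate end.
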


The proof of \fullref{easson} is based on the fact that $M^+$ can be
canonically identified with one of the possible geometric limits of the
sequence $(N_{f^n},\rho_n)$ and that, by \fullref{ugly}, every infinite
degree covering of this geometric limit is convex-cocompact; compare with
\fullref{super-ugly}.

Another consequence of \fullref{main} and \fullref{geometric limit} is
that, again for large $n$, the fundamental group of the manifold $N_{f^n}$
has rank $g$. The proof of this fact is almost word-by-word the same as the
proof of \fullref{rank-mapping1}.

\begin{sat}\label{rank-hossein}
There is $n_f$ with $\rank(\pi_1(N_{f^n}))=g$ for all $n\ge n_f$. Moreover,
every minimal generating set of $\pi_1(N_{f^n})$ is Nielsen equivalent to a
standard generating set for all sufficiently large $n$. In particular
$\pi_1(N_{f^n})$ has at most 2 Nielsen equivalence classes of minimal
generating sets.
\end{sat}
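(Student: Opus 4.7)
The plan is to mimic the proof of \fullref{rank-mapping1} essentially verbatim, substituting the negatively curved metrics $\rho_n$ on $N_{f^n}$ provided by \fullref{main} for the hyperbolic metrics on the mapping tori, and substituting \fullref{geometric limit} for Thurston's \hyperlink{CThe}{Covering Theorem} description of the possible geometric limits. Concretely, for each large $n$ I would fix a minimal length carrier graph $f_n\co X_n\to (N_{f^n},\rho_n)$ (the negatively curved analogue of \fullref{minimal} gives one), which satisfies $\rank(\pi_1(X_n))=\rank(\pi_1(N_{f^n}))\le g$ in view of the standard Heegaard splitting. The soft metric arguments in \fullref{bounded-chains} go through for pinched negative curvature, so I get a chain $X_n^{(0)}\subset Y_{n,1}\subset\dots\subset Y_{n,k_n}=X_n$ with $l_{f_n,\rel(Y_{n,i-1})}(Y_{n,i})\le L$ for a uniform constant $L$.

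Next, I choose $Y_n\subset X_n$ maximal with the property that $l_{f_n}(Y_n)$ stays bounded as $n\to\infty$, and I base-point $N_{f^n}$ inside the image of $Y_n$. Since the $\rho_n$ have a uniform injectivity radius lower bound by \fullref{main}, I extract a geometrically convergent subsequence $(N_{f^{n_i}},\rho_{n_i},p_i)\to(N_\infty,p_\infty)$. By \fullref{geometric limit}, $N_\infty$ is hyperbolic and is either a genus $g$ handlebody or $\Sigma_g\times\BR$; using the almost isometric embeddings of geometric convergence, each component of $Y_{n_i}$ lifts to a bounded graph $\wbar Y_\infty$ in $N_\infty$ whose fundamental group image $\Gamma\subset\pi_1(N_\infty)$ is (up to passing to a further subsequence) independent of $i$.

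Now comes the dichotomy copied from the proof of \fullref{rank-mapping1}. If $\Gamma$ has infinite index in $\pi_1(N_\infty)$, then $\Gamma$ is free and, by \fullref{ugly} together with \fullref{super-ugly}, the image of $\pi_1(\wbar Y_{n_i})$ in $\pi_1(N_{f^{n_i}})$ is injective and convex-cocompact for large $i$; hence the thick convex-hull of its lift has cocompact $\Gamma$-action, so minimality of $f_{n_i}\co X_{n_i}\to N_{f^{n_i}}$ yields a uniform bound $l_{f_{n_i}}(e)\le D+l_{f_{n_i},\rel(\wbar Y_{n_i})}(e)$ for every edge $e$ adjacent to $\wbar Y_{n_i}$. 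Applied to every component of $Y_{n_i}$ this contradicts \fullref{bounded-chains} together with the maximal choice of $Y_n$. Therefore at least one component $\hat Y_n$ has finite index image. In both possibilities for $N_\infty$, a finite index subgroup has rank $\ge g$ (for the handlebody case this is the Schreier formula in $F_g$; for $\Sigma_g\times\BR$ one gets rank $\ge 2g>g$), so $g\le\rank(\pi_1(\hat Y_n))\le\rank(\pi_1(X_n))\le g$, forcing equality throughout and identifying $\pi_1(\hat Y_n)\twoheadrightarrow\pi_1(M^+)$ or $\pi_1(M^-)$ as an isomorphism. This gives $\rank(\pi_1(N_{f^n}))=g$ and simultaneously identifies the corresponding generating set, via \fullref{Nielsen}, as Nielsen equivalent to the standard generating set coming from either $M^+$ or $M^-$; since $N_\infty$ can only degenerate to one of these two handlebody limits in a neighborhood of a chosen component, we obtain at most two Nielsen classes.

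The main obstacle, as in \fullref{rank-mapping1}, will be the infinite-index case of the dichotomy: one must rule it out using only the uniform geometric control provided by \fullref{main} and \fullref{geometric limit}, since the hyperbolic metric on $N_{f^n}$ is unavailable. The key point is that \fullref{ugly} applies to both handlebody and trivial interval bundle limits (both have infinite volume and end-classification in the sense of the \hyperlink{CThe}{Covering Theorem}, extended to the CAT$(-1)$ setting), and that \fullref{super-ugly} transfers convex-cocompactness back to $N_{f^{n_i}}$ for large $i$. A minor additional subtlety, compared to the fibered case, is to check that in the handlebody limit case the only finite index subgroup of $F_g$ of rank $g$ is $F_g$ itself, which rules out $\hat Y_n$ generating a proper subgroup and is what allows the identification with a standard generating set.
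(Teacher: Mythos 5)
Your proposal is correct and follows exactly the route the paper takes: the text explicitly states that the proof of \fullref{rank-hossein} is ``almost word-by-word the same as the proof of \fullref{rank-mapping1},'' which is precisely the substitution you carry out, replacing the hyperbolic metrics on mapping tori by the pinched negatively curved metrics from \fullref{main} and replacing the description of geometric limits by \fullref{geometric limit}, with the infinite-index case handled via \fullref{ugly} and \fullref{super-ugly}. The extra care you take in the finite-index case (Schreier rank computation forcing the handlebody limit and the isomorphism $\pi_1(\hat Y_n)\cong\pi_1(M^\pm)$, then \fullref{Nielsen} to get the Nielsen class) is consistent with and fills in the details the survey deliberately omits.
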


In \cite{Namazi-Souto} we also use similar arguments as in the proof of
\fullref{rank-mapping1} and \fullref{rank-hossein} to prove for example
that for sufficiently large $n$, every set of at most $2g-2$ elements in
$\pi_1(N_{f^n})$ which generate a proper subgroup does in fact generate a
free subgroup. This bound is sharp. Finally, we use the same arguments as
outlined in the proof of \fullref{genus-mapping1} to prove that $N_{f^n}$
has, again for large $n$, Heegaard genus $g$ and that the standard Heegaard
splitting is, up to isotopy, the unique minimal genus Heegaard splitting.

\begin{sat}\label{minimal genus}
There is $n_f$ such that for all $n\ge n_f$ the following holds: every
minimal genus Heegaard splitting of $N_{f^n}$ is isotopic to the standard
one.
\end{sat}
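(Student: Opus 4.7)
The plan is to mimic the proof of \fullref{genus-mapping1}, replacing the use of incompressibility of the fiber by the geometric-limit analysis afforded by \fullref{geometric limit}. First, I observe that $g(N_{f^n})=g$ for all large $n$: the standard Heegaard splitting gives the bound $g(N_{f^n})\le g$, while $g(N_{f^n})\ge\rank(\pi_1(N_{f^n}))=g$ by \fullref{rank-hossein}. Now let $S_n$ be any genus-$g$ Heegaard surface in $N_{f^n}$; being of minimal genus, $S_n$ is unstabilized, so by \fullref{all-generalized} it arises as the amalgamation of a strongly irreducible generalized Heegaard splitting $(\Sigma_I^n,\Sigma_H^n)$ whose component surfaces all have genus at most $g$.

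Equip $N_{f^n}$ with the negatively curved metric $\rho_n$ supplied by \fullref{main}. I realize the components of $\Sigma_I^n$ as stable minimal surfaces via \fullref{thm:incompressible-minimal}, and the strongly irreducible Heegaard surface $\Sigma_H^n$ of each component of $N_{f^n}\setminus\Sigma_I^n$ as a (possibly $1$--handle modified) minimal surface via \fullref{Pitts-Rubinstein}; the Pitts--Rubinstein min-max construction only uses variable negative curvature and so applies verbatim to $\rho_n$. Since \fullref{main} guarantees a uniform positive lower bound on the injectivity radius and the genera are bounded by $g$, the bounded diameter lemma for minimal surfaces bounds the diameter of each of these minimal components uniformly in $n$.

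Pick a basepoint on each minimal component and pass to a geometric limit of $(N_{f^n},\rho_n)$ based there; by \fullref{geometric limit} the limit is either a hyperbolic structure on a handlebody $M^\pm$ with a singly degenerate end carrying $\lambda^\pm$, or a doubly degenerate hyperbolic $\Sigma_g\times\BR$ with ending laminations $\lambda^+$ and $\lambda^-$. In either limit, each minimal component passes to a closed minimal surface of genus at most $g$. In the $\Sigma_g\times\BR$ limit, such a surface is either $\pi_1$--injective, in which case it is isotopic to the fiber by Waldhausen, or compressible, in which case \fullref{strongly} applied in the limit (combined with the strong irreducibility in $N_{f^n}$ and \fullref{ugly} via the \hyperlink{CThe}{Covering Theorem}) forces it also to be isotopic to a fiber. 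In a handlebody limit $M^\pm$, any closed surface of genus at most $g$ is compressible; combining \fullref{super-ugly} with \fullref{strongly} pins it down to be isotopic to $\bnd M^\pm$ pushed slightly inward. In every case the limit surface is isotopic to the image of the gluing surface $\bnd M^+=\bnd M^-$.

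Transporting these isotopies back through the almost-isometric embeddings of geometric convergence, one concludes that for large $n$ the incompressible piece $\Sigma_I^n$ must be empty and that $\Sigma_H^n=S_n$ is isotopic in $N_{f^n}$ to the standard Heegaard surface. The main obstacle lies in the case analysis of minimal surfaces in the geometric limits, and most delicately in handling Case $(B)$ of \fullref{Pitts-Rubinstein}, where $S_n$ differs from the minimal surface $F_n$ by a vertical $1$--handle: one has to verify that the $1$--handle, which lives in a handlebody complementary region of $F_n$, can be isotoped inside $N_{f^n}$ so as to recover the standard Heegaard surface, rather than producing a genuinely different minimal genus splitting. This last step is carried out by a compactness argument using \fullref{super-ugly} together with the uniqueness, up to isotopy, of the limit Heegaard-type surfaces identified in the previous paragraph.
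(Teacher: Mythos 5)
Your overall strategy matches the paper's: Souto explicitly says that Theorem~\ref{minimal genus} is proved ``using the same arguments as outlined in the proof of Theorem~\ref{genus-mapping1}'' — that is, Scharlemann--Thompson to reduce to strongly irreducible splittings, Pitts--Rubinstein to produce a minimal surface, the bounded diameter lemma, and a geometric limit analysis using the explicit control coming from Theorem~\ref{main} and Theorem~\ref{geometric limit}. Your first two paragraphs execute this correctly, including the rank-based lower bound $g(N_{f^n})\ge g$, which is the right way to pin down the Heegaard genus.

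The geometric-limit case analysis in your third and fourth paragraphs, however, has several soft spots. First, in the handlebody limit the phrase ``isotopic to $\partial M^\pm$ pushed slightly inward'' is not well posed: the geometric limit is an \emph{open} handlebody (one with a simply degenerate end), with no boundary surface in sight. What has to be argued is that this case simply cannot occur — that a strongly irreducible Heegaard surface $F_n$ of genus $\le g$ and bounded diameter cannot drift arbitrarily deep into the $M^+$ or $M^-$ piece, because then one of the two complementary handlebodies of $F_n$ would be confined to a small ball deep in $M^+$ while the other would have to contain both $M^-$ and essentially all of $M^+$, which is topologically impossible for a genus-$g$ handlebody. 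Second, Lemma~\ref{strongly} concerns strongly irreducible Heegaard surfaces in a \emph{closed} 3--manifold, and you invoke it ``in the limit'' — that is, for a closed surface sitting inside a non-compact geometric limit that is not a Heegaard surface of anything — without explaining how the lemma's conclusions transfer. The correct route is to push the compressing disk found in the limit back into $N_{f^n}$ via the almost-isometric embeddings and run the argument there. Third, your middle case analysis is tacitly written for Case (A) of Pitts--Rubinstein (so that the limit minimal surface has genus $g$ and the alternative is fiber vs.\ compressible), yet in Case (B) the minimal surface $F_n$ has strictly smaller genus or is one-sided, and then the alternative ``$\pi_1$--injective hence isotopic to the fiber'' is vacuous; the deferred treatment of Case (B) in your final paragraph is therefore not a patch on an otherwise complete argument but has to be woven into the case analysis from the start. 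Finally, a cleaner way to dispose of the incompressible part $\Sigma_I^n$ is the purely topological one: for large $n$ the standard splitting of $N_{f^n}$ has arbitrarily large curve-complex distance, and a Hartshorn-type bound then rules out closed incompressible surfaces of genus $\le g$ outright, without any appeal to geometric limits.
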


Observe that the bound on the Heegaard genus follows also from
\fullref{rank-hossein}. \fullref{minimal genus} is also due to
Scharlemann--Tomova \cite{Scharlemann-Tomova} but their methods are
completely different.

As mentioned above, \fullref{main} and \fullref{geometric limit} are the
key to all the subsequent topological results. While \fullref{main} is a
consequence of the positive answer to Thurston's geometrization conjecture,
the author would like to remark that even assuming the mere existence of
negatively curved metric, or even hyperbolic, on $N_{f^n}$ it is not
obvious how to derive any of the topological applications outlined above.
Recall for example that it was not obvious how to prove, even using the
geometrization conjecture, that $N_{f^n}$ is for large $n$ not finitely
covered by $\BS^3$. It is the control on the geometry of the manifolds
$N_{f^n}$ provided by \fullref{geometric limit} that opens the door to all
the subsequent results.

\section{Bounding the volume in terms of combinatorial
data}\label{sec:volume}
Until now we have mainly studied quite particular classes of manifolds; in
this section our point of view changes. Here we discuss results due to
Brock \cite{Brock-pants}, and Brock and the author, showing that it is
possible to give linear upper and lower bounds for the volume of a
hyperbolic 3--manifold in terms of combinatorial distances.

Given a closed surface $S$ let $\CP(S)$ be its {\em pants-complex\/}, ie
the graph whose vertices are isotopy classes of pants decomposition and
where two pants decompositions $P$ and $P'$ are at distance one if they
differ by an elementary move. Here, two pants decompositions
$P=\{\gamma_1,\dots,\gamma_{3g-3}\}$ and
$P'=\{\gamma_1',\dots,\gamma_{3g-3}'\}$ differ by an elementary move if
there is some $j$ such that
\begin{itemize}
\item $\gamma_i=\gamma_i'$ for all $i\neq j$ and
\item such that $\gamma_j$ and $\gamma_j'$ are different curves with the
minimal possible intersection number in
$S\setminus\{\gamma_1,\dots,\gamma_{j-1},\gamma_{j+1},\dots,\gamma_{3g-3}\}$.
\end{itemize}
In more mundane terms, in order to change $P$ by an elementary move, one
keeps all components of $P$ but one fixed and this component is changed in
the simplest possible way.

The pants complex $\CP(S)$ is known to be connected. In particular,
declaring every edge to have unit length one obtains an interior distance
invariant under the natural action of the mapping class group on the pants
complex. From our point of view, the pants complex is important because of
the following result due to Brock \cite{Brock-pants} relating volumes of
mapping tori to distances in the pants complex.

\begin{sat}[Brock \cite{Brock-pants}]\label{volume1}
For every $g$ there is a constant $L_g>0$ with
$$L_g^{-1}l_{\CP(\Sigma_g)}(f)\le\vol(M_f)\le L_gl_{\CP(\Sigma_g)}(f)$$
for every pseudo-Anosov mapping class $f$ of the surface $\Sigma_g$ of
genus $g$. Here $M_f$ is the mapping torus of $f$ and
$$l_{\CP(\Sigma_g)}(f)=\inf\{d_{\CP(\Sigma_g)}(P,f(P))\vert
P\in\CP(\Sigma_g)\}$$
is the translation length of $f$ in the pants complex of $\Sigma_g$.
\end{sat}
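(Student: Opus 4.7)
My plan is to reduce the theorem to a geometric comparison via Brock's earlier quasi-isometry theorem, which identifies the pants complex $\CP(\Sigma_g)$ with Teichm\"uller space $\CT(\Sigma_g)$ equipped with the Weil--Petersson metric $d_{WP}$, with constants depending only on $g$. The quasi-isometry is given by sending a pants decomposition $P$ to any hyperbolic structure in which every curve of $P$ has length at most the Bers constant $B_g$. Under this quasi-isometry the combinatorial translation length $l_{\CP(\Sigma_g)}(f)$ of a pseudo-Anosov $f\in\Map(\Sigma_g)$ is comparable to its Weil--Petersson translation length $l_{WP}(f)=\inf_{X\in\CT(\Sigma_g)} d_{WP}(X,f(X))$, so the theorem reduces to showing $\vol(M_f)\asymp l_{WP}(f)$ with constants depending only on $g$.

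For the upper bound $\vol(M_f)\le L_g\, l_{\CP(\Sigma_g)}(f)$, I would invoke Brock's convex-core volume estimate for quasi-Fuchsian manifolds, $\vol(\mathrm{core}(Q(X,Y)))\asymp d_{WP}(X,Y)+O(1)$, and apply it to the sequence $Q_n := Q(f^{-n}(X),f^n(X))$. By Thurston's double limit theorem the $Q_n$ converge geometrically, after a suitable choice of basepoints, to the cyclic cover $\tilde M_f\cong\Sigma_g\times\BR$ of the mapping torus; comparing volumes of large fundamental domains gives $\vol(\mathrm{core}(Q_n))=2n\,\vol(M_f)+O(1)$, while $d_{WP}(f^{-n}(X),f^n(X))=2n\,l_{WP}(f)+O(1)$. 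Dividing by $2n$ and letting $n\to\infty$ yields the bound.

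For the lower bound $l_{\CP(\Sigma_g)}(f)\le L_g\,\vol(M_f)$ I would work directly in $M_f$. Using the standard existence and interpolation results for pleated surfaces (Thurston, Canary--Minsky), construct a continuous family $\{F_t\}_{t\in[0,1]}$ of pleated surfaces in $\tilde M_f$ representing the fiber class, with $F_1$ the image of $F_0$ under the deck transformation corresponding to $f$. By Gauss--Bonnet each $F_t$ has area $2\pi|\chi(\Sigma_g)|$, so the number of essentially distinct $F_t$ needed to sweep a fundamental domain is at most $C_g\,\vol(M_f)$. For each $F_t$ choose a Bers pants decomposition $P_t$ on $F_t$; the key lemma is that nearby pleated surfaces in the family have Bers pants decompositions at uniformly bounded distance in $\CP(\Sigma_g)$. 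Concatenating these short sub-paths produces a path in $\CP(\Sigma_g)$ from $P_0$ to $P_1=f(P_0)$ of length at most $L_g\,\vol(M_f)$.

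The hard part will be the bounded-change lemma underlying the lower bound: the Bers pants decomposition is a discrete choice attached to a continuously varying surface, and one must control its jumps as $t$ varies. Equivalently, the Bers region in $\CT(\Sigma_g)$ associated with a fixed pants decomposition $P$ must have uniformly bounded $d_{WP}$--diameter, which is essentially the nontrivial content of the quasi-isometry $\CP(\Sigma_g)\to(\CT(\Sigma_g),d_{WP})$. Translating this Teichm\"uller-theoretic statement back to pleated surfaces requires carefully relating the intrinsic hyperbolic metric of a pleated surface to a point in Teichm\"uller space, and this is where estimates on short curves and Masur--Minsky subsurface projections enter. The remaining pieces of the argument are soft; this quantitative comparison between pleated-surface geometry and pants-complex combinatorics is what makes the theorem work.
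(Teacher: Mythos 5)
Your proposal takes Brock's original route from \cite{Brock-pants}: reduce to Weil--Petersson geometry via the quasi-isometry $\CP(\Sigma_g)\to(\CT(\Sigma_g),d_{WP})$, invoke the quasi-Fuchsian convex-core estimate $\vol(\mathrm{core}(Q(X,Y)))\asymp d_{WP}(X,Y)$, and pass to the mapping-torus statement via a double-limit/fundamental-domain counting argument applied to $Q(f^{-n}(X),f^n(X))$. That argument is valid (and in fact gives both inequalities at once), but it does not avoid the hard analysis: the quasi-Fuchsian convex-core estimate is proved in \cite{Brock-pants} using the Masur--Minsky hierarchy machinery and Minsky's a priori bounds, so you are citing a theorem of the same depth rather than circumventing it. The proof sketched in the present paper is deliberately different. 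The upper bound is obtained elementarily: a path $(P_1,\dots,P_d)$ in the pants complex with $P_d=f(P_1)$ yields an ideal triangulation of $M_f$ with $O(d)$ simplices, bounding the simplicial and hence hyperbolic volume. The lower bound is obtained by cutting the thick part of $M_f$ into ``pocket'' product regions of uniformly bounded geometry and extracting all constants from geometric-limit compactness, avoiding curve-complex combinatorics entirely --- this being precisely the point of the Brock--Souto simplification the paper is advertising.

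Your second, direct lower-bound sketch via pleated-surface sweeps is much closer in spirit to the paper's argument, but there is a concrete gap where you flag the ``hard part.'' The difficulty is not primarily the bounded-change lemma for Bers pants decompositions; it is that a pleated surface may descend arbitrarily deeply into the thin part of $M_f$, and there a surface of area $2\pi|\chi(\Sigma_g)|$ can have unbounded diameter, so the step ``the number of essentially distinct $F_t$ needed to sweep a fundamental domain is at most $C_g\vol(M_f)$'' fails without a thick-thin decomposition. The paper resolves this not with subsurface projections (which would reintroduce Masur--Minsky) but geometrically: Otal's unknotting theorem shows the short geodesics of $M_f$ are simultaneously isotopic into disjoint fibers, Hodgson--Kerckhoff cone-manifold deformation theory shows $\vol(M_f)$ and $\vol(M_f^*)$ (the complement of the short geodesics) are comparable and that $M_f^*$ is uniformly thick, and minimal surfaces then cut $M_f^*$ into thick product pockets $F_i\times(0,1)$ to which the compactness argument applies. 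Pointing to Masur--Minsky subsurface projections at this juncture is therefore the wrong ingredient for the proof the paper has in mind, and in any case you would still need the Otal-type unknottedness to justify the pocket decomposition of the thin part.
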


It is a beautiful observation due to Brock \cite{Brock-pants} that the
pants complex is quasi-isometric to the Teichm\"uller metric when endowed
with the Weil-Petersson metric. In particular, \fullref{volume1} is stated
in \cite{Brock-pants} in terms of translation distances in the
Teichm\"uller space.

In the setting of \fullref{volume1}, getting upper bounds for the volume is
not difficult. The idea is that if $(P_1,P_2,\dots,P_d)$ is a path in the
pants complex with $P_d=f(P_1)$ then one obtains an ideal triangulation of
the mapping torus $M_f$ with some controlled number of simplices. In
particular, the translation length bounds the simplicial volume of $M_f$,
and hence bounds the volume itself. Agol \cite[Cor 2.4]{Agol-small}
obtained sharp upper bounds for the volume of $M_f$ in terms of
$l_{\CP(\Sigma_g)}(f)$ from which it follows for example that
$$\vol(M_f)\le 2V_{oct}l_{\CP(\Sigma_g)}(f)$$
where $V_{oct}$ is the volume of a regular ideal octahedron.

\begin{proof}
The bulk of the proof of \fullref{volume1} is to show that the volume can
be bounded from below in terms of the translation distance in the
pants-complex. In \cite{Brock-pants}, this lower bound is derived using
deep difficult results due to Masur--Minsky
\cite{Masur-Minsky1,Masur-Minsky2} about the geometry of the curve-complex
and exploiting the relation between the curve-complex and the geometry of
hyperbolic manifolds discovered by Minsky. However, a simpler proof of the
lower bound was found by Brock and the author.

The first observation is that the volume of $M_f$ is roughly the same as
the volume of its thick part. The idea of the proof is to decompose the
thick part of $M_f$ in a collection of disjoint pieces, called pockets, and
then use geometric limit arguments to estimate the volume of each single
pocket.

Consider for the time being only mapping tori $M_f$ with injectivity radius
at least $\epsilon$. This assumption has the following consequence:
\begin{itemize}
\item[(*)] If $p_i$ is a sequence of points in different mapping tori
$M_{f_i}$ with $\inj(M_{f_i})\ge\epsilon$ for all $i$ then, up to passing
to a subsequence, the sequence of pointed manifolds $(M_{f_i},p_i)$
converges geometrically to a pointed hyperbolic manifold
$(M_\infty,p_\infty)$ homeomorphic to $\Sigma_g\times\BR$ and with at least
injectivity radius $\epsilon$.
\end{itemize}
In some way (*) gives full geometric control. For instance, it is known
that every point $p$ in a mapping torus $M_f$ is contained in a surface
$S_p$ homotopic to the fiber of $M_f$ and with curvature $\le -1$.
Thurston's bounded diameter lemma asserts that
$$\diam(S_p)\le\frac{4g-4}{\epsilon^2}$$
and hence it follows directly from (*) that there is a constant $c_1$
depending of $\epsilon$ and $g$ such that if $M_f$ has at least injectivity
radius $\epsilon$ then for all $p\in M_f$ the surface $S_p$ is in fact
homotopic to an embedded surface $S_p'$ by a homotopy whose tracks have at
most length $c_1$ (see footnote\footnote{In order to see that the constant
$c_1$ exists we can proceed as follows: given a surface $S$ in a 3--manifold
consider the infimum of all possible lengths of tracks of homotopies
between $S$ and an embedded surface; this infimum may be $\infty$ if such a
homotopy does not exist. Then it is easy to see that this function is semi
continuous in the geometric topology and hence bounded on compact sets; (*)
is a compactness statement.}). For every $p$ we choose a minimal length
pants decomposition $P_p$ in the surface $S_p$.

Choose $D$ to be a sufficiently large constant, depending on $g$ and
$\epsilon$. Given a mapping torus $M_f$ with injectivity radius
$\inj(M_f)\ge\epsilon$ let $\{p_1,\dots,p_k\}$ be a maximal collection of
points in $M_f$ which are at at least distance $D$ from each other. Observe
that for every $i$ there is $j\neq i$ such that $p_i$ and $p_j$ are at at
most distance $2D$. For each $i=1,\dots,k$ we consider the negatively
curved surface $S_{p_i}$ passing trough $p_i$ and the associated embedded
surface $S_{p_i}'$. Then, since $D$ is large and the diameter of the
surfaces $S_{p_i}$ is bounded we have that the surfaces $S_{p_i}$ and
$S_{p_j}$ are disjoint for all $i\neq j$; the same also holds for the
associated embedded surfaces $S_{p_i}'$ and $S_{p_j}'$ by the bound on the
length of the tracks of the homotopies. In particular, the surfaces
$S_{p_i}'$ and $S_{p_j}'$ are disjoint, embedded and homotopic to the
fiber. We obtain from Waldhausen's cobordism theorem that they bound in
$M_f$ a product region homeomorphic to $\Sigma_g\times[0,1]$. In
particular, the set $\{p_1,\dots,p_k\}$ is cyclically ordered, say as
$[p_1,\dots,p_k]$.

\begin{figure}[ht!]
\begin{center}
\includegraphics[scale=0.6,angle=270]{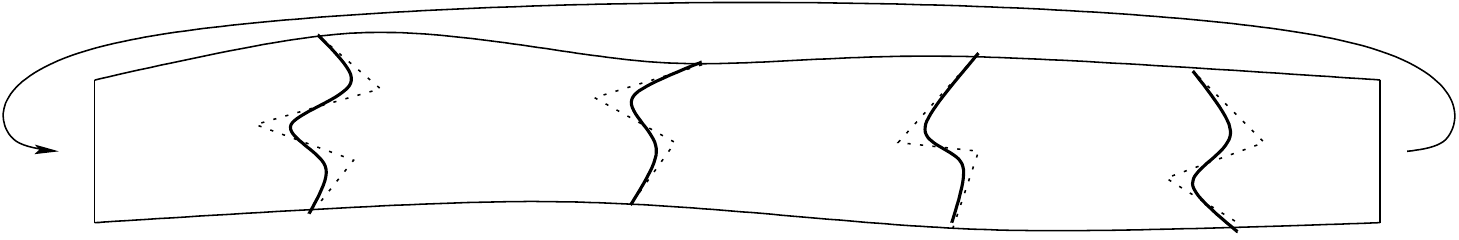}
\end{center}
\caption{Proof of \fullref{volume1} in the thick case: The dotted lines are
simplicial hyperbolic surfaces and the thick lines are close-by embedded
surfaces dividing the manifold $M_f$ into roughly uniform pieces.}
\label{fig:5}
\end{figure}

In other words, under the assumption that $\inj(M_f)\ge\epsilon$ and that
$\diam(M_g)\ge 10D$, we have that a decomposition of the mapping torus
$M_f$ into cyclically ordered product regions $[U_1,\dots,U_k]$ with
$$\bnd U_1=S_{p_1}'\cup S_{p_2}',\cdots,\bnd U_{k-1}=S_{p_{k-1}}'\cup
S_{p_k}',\bnd U_k=S_{p_k}'\cup S_{p_1}'$$
and such that for all $i=1,\dots,k$ if $p,q\in\bnd U_i$ are in different
components then
$$D-2\frac{4g-4}{\epsilon^2}\le d_{M_f}(p,q)\le
2D+2\frac{4g-4}{\epsilon^2}.$$
These bounds, together with (*) implies that there are constants $V$ and
$L$ depending only on $g$ and $\epsilon$ such that for every one of the
product regions $U_i$ we have
$$\vol(U_i)\ge V, \qua d_{\CP(\Sigma_g)}(P_{p_i},P_{p_{i+1}})\le L.$$
From the first inequality we obtain that there are at most $\frac
1V\vol(M_f)$ product regions. From the second we deduce that
$$d_{\CP(\Sigma_g)}(P_{p_1},f(P_{p_1}))\le\frac LV\vol(M_f).$$
This concludes the proof of \fullref{volume1} under the additional
assumption that the mapping torus in question has injectivity radius
$\inj(M_f)\ge\epsilon$.

In general the idea is to decompose the thick part of $M_f$ into product
regions. The key point is the following well-known result due to Otal
\cite{Otal-unknot}.

\begin{sat}[Otal's unknotting theorem]
For every $g$ there is a constant $\epsilon_g$ such that for every
$f \co \Sigma_g\to\Sigma_g$ pseudo-Anosov the following holds: There is a
collection of disjoint surfaces $S_1,\dots,S_k$ parallel to
$\Sigma_g\times\{0\}$ with the property that every primitive geodesic in
$M_f$ shorter than $\epsilon_g$ is contained in $\cup S_i$.
\end{sat}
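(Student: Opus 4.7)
The plan is to show that short primitive geodesics in $M_f$ are forced to lie on fiber surfaces, using a sweep-out by pleated surfaces together with the CAT($-1$) monotonicity formula. The key point is that short geodesics have Margulis tubes of large radius, while pleated fiber surfaces have \emph{bounded} area, so the two are compatible only if the geodesic is homotopic into the fiber.

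First, by Thurston's theory combined with Canary's interpolation theorem for simplicial hyperbolic surfaces, every point of $M_f$ lies on some pleated (or simplicial hyperbolic) surface homotopic to the fiber. Each such surface has area at most $4\pi(g-1)$ by Gauss--Bonnet. By the Margulis lemma, the radius $r(\ell)$ of the standard Margulis tube around a primitive geodesic of length $\ell<\mu$ (the 3-dimensional Margulis constant) tends to infinity as $\ell\to 0$. Choose $\epsilon_g>0$ so small that
$$2\pi\bigl(\cosh(r(\epsilon_g))-1\bigr)>4\pi(g-1);$$
this choice depends only on $g$.

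Now let $\gamma\subset M_f$ be a primitive geodesic of length less than $\epsilon_g$, with Margulis tube $T_\gamma$. Since the sweep-out surfaces fill $M_f$, some pleated surface $P$ must intersect the core of $T_\gamma$. If $\gamma$ were not freely homotopic into the fiber---i.e., if $[\gamma]$ were not in the image of $\pi_1(P)\to\pi_1(M_f)$---then any component of $P\cap T_\gamma$ crossing $\gamma$ would lift to an immersed disk in the universal cover of $P$ mapped into a ball of radius $r(\epsilon_g)$ around a lift of $\gamma$ in $\mathbb{H}^3$. The CAT($-1$) monotonicity formula would then force the area of this disk to exceed $2\pi(\cosh(r(\epsilon_g))-1)>4\pi(g-1)$, contradicting the area bound on $P$. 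Therefore $\gamma$ is freely homotopic in $M_f$ to a closed curve on the fiber.

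Finally, let $\gamma_1,\dots,\gamma_m$ enumerate all primitive short geodesics. Their Margulis tubes are pairwise disjoint, and a similar area argument shows that the corresponding conjugacy classes in $\pi_1(\Sigma_g)$ are represented by pairwise \emph{disjoint} simple closed curves $\alpha_1,\dots,\alpha_m$: a pair with essential intersection would force any pleated fiber surface realizing both as geodesics to have area above the Gauss--Bonnet bound. Completing $\{\alpha_i\}$ to a multicurve and invoking the standard construction of pleated surfaces with prescribed pleating locus then produces embedded fiber-parallel pleated surfaces $S_1,\dots,S_k$ whose union literally contains all the $\gamma_i$. The main obstacle is in this last step: although each $\gamma_i$ individually lies on some fiber-parallel pleated surface by the previous paragraph, arranging them simultaneously to lie on finitely many \emph{disjoint embedded} surfaces requires a careful combinatorial analysis of the multicurve $\{\alpha_i\}$ on $\Sigma_g$ and a delicate control on how pleated surfaces can be made embedded and mutually disjoint---this is where Otal's original argument does most of its work.
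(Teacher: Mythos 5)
The paper does not prove this theorem; it states it as a black box attributed to Otal and cites \cite{Otal-unknot}, so there is no in-paper proof to compare against. Evaluating your argument on its own terms: the first half, showing that a short primitive geodesic $\gamma$ must lie in the fiber subgroup $\pi_1(\Sigma_g)\lhd\pi_1(M_f)$ (because a pleated fiber surface of area $\le 4\pi(g-1)$ that reaches the core of a very deep Margulis tube must represent the core in its fundamental group, on pain of sweeping out too much area), is a reasonable sketch of a known type of argument, although the monotonicity step needs more care since pleated surfaces are only CAT($-1$), not minimal, and you also need a separate (but standard) argument that the corresponding geodesic on the fiber surface is \emph{simple}.

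The genuine gap is that you stop at ``$\gamma$ is freely homotopic to a closed curve on the fiber,'' whereas the theorem asserts $\gamma$ is \emph{literally contained in} an embedded surface parallel to $\Sigma_g\times\{0\}$, i.e.\ that $\gamma$ is unknotted and the collection is unlinked. In $\Sigma_g\times\mathbb{R}$ a closed geodesic can be freely homotopic to a simple curve on a level surface and yet be knotted, hence not isotopic into any level surface; ruling this out is precisely the ``unknotting'' content that gives the theorem its name, and it does not follow from any $\pi_1$-level statement. Your last paragraph misidentifies the remaining difficulty as a combinatorial one about making several pleated surfaces simultaneously disjoint, but the real issue is one step earlier and more basic: you have not shown that even a \emph{single} short geodesic lies on an embedded fiber-parallel surface. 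Pleated surfaces are in general not embedded (unlike the minimal surfaces of Freedman--Hass--Scott), so ``$\gamma_i$ lies on a pleated fiber surface'' does not yield ``$\gamma_i$ lies on an embedded fiber-parallel surface,'' even individually. Otal's actual proof does substantial geometric work exactly here, exploiting the solid-torus structure of the deep Margulis tubes and the product structure of the complement to conclude that the tube cores are unknotted and mutually unlinked, before any question of fitting them onto finitely many disjoint fibers arises.
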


In the sequel denote by $\Gamma$ the collection of primitive geodesics in
$M_f$ that are shorter than $\epsilon_g$. By Thurston's
\hyperlink{HypTh}{Hyperbolization Theorem}
the manifold $M_f^*=M_f\setminus\Gamma$ admits a complete finite
volume hyperbolic metric. Moreover, it follows from the deformation theory
of cone-manifolds due to Hodgson--Kerckhoff \cite{Hodgson-Kerckhoff} that up
to assuming that $\epsilon_g$ is smaller than some other universal constant
the ratio between the volumes of $M_f$ and $M_f^*$ is close to 1, and
\begin{itemize}
\item[(**)] every geodesic in $M_f^*$ has at least length
$\frac{\epsilon_g}2$. \end{itemize}

\begin{figure}[ht!]
\begin{center}
\includegraphics[scale=0.4,angle=270]{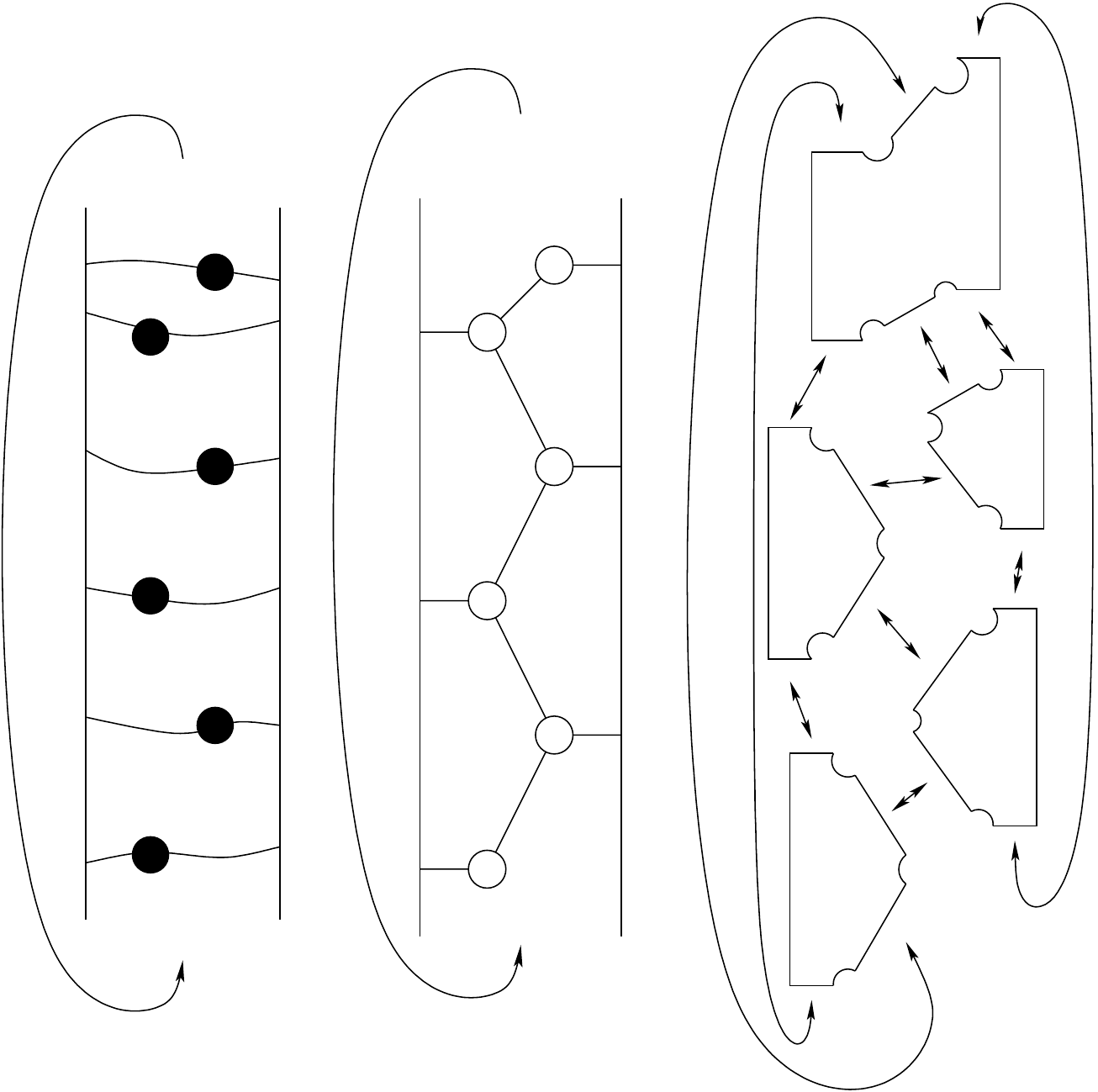}
\end{center}
\caption{Proof of \fullref{volume1}: The first picture represents the
manifold $M_f$ where the black dots are short geodesics and the lines are
the surfaces provided by Otal's unknotting theorem. In the second picture
one has the manifold $M_f^*$ and the straight lines are obtained from
Otal's surfaces by (1) isotoping as much as possible into the cusps, (2)
removing parallel components, and (3) pulling tight. In the third image one
sees the associated pocket decomposition.}
\label{fig:6}
\end{figure}

\fullref{thm:incompressible-minimal} and Otal's theorem imply that $M_f^*$
can be cut open long minimal surfaces into disjoint product regions
$V_i=F_i\times(0,1)$ where $F_i$ is a subsurface of $\Sigma_g$; the $V_i$'s
are the so-called pockets. The volume of $M_f^*$ is the sum of the volumes
of the regions $V_i$. Moreover, choosing for each $i$ minimal length pants
decompositions $P_i^-$ and $P_i^+$ of the boundary of $V_i$ it is not
difficult to see that
$$l_{\CP(\Sigma_g)}\le\sum_{V_i} d_{\CP(F_i)}(P_i^-,P_i^+).$$
In particular, it suffices to prove that the volume of each one of the
product regions $V_i$ bounds the distance $d_{\CP(F_i)}(P_i^-,P_i^+)$ from
above. However, since in the setting we have (**) we are again in the
situation that $V_i$ is thick: the same arguments used above yield the
desired result in this case.

This concludes the sketch of the proof of \fullref{volume1}.
\end{proof}

The following is an analogous of Otal's theorem in the setting of Heegaard
splittings.

\begin{sat}[Souto]\label{unknotting}
For every $g$ there is a constant $\epsilon_g$ such that for every strongly
irreducible genus $g$ Heegaard surface $S$ in a closed hyperbolic
3--manifold $M$ the following holds: There is a collection of disjoint
surfaces $S_1,\dots,S_k$ parallel to $S$ with the property that every
primitive geodesic shorter than $\epsilon_g$ in $M$ is contained in $\cup
S_i$.
\end{sat}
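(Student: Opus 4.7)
My plan is to adapt Otal's strategy to the Heegaard setting, with the \fullref{Pitts-Rubinstein} theorem replacing the pleated surfaces used in the fibered case. First, apply Pitts--Rubinstein to the strongly irreducible surface $S$ to produce an embedded minimal surface $F$: either $S$ is isotopic to $F$, or $S$ is obtained from $\bnd\CN(F)$ by attaching a vertical $1$-handle. I will focus on the first case; the second follows by working with the two parallel copies of $F$ sitting inside $\bnd\CN(F)$.

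Next, choose $\epsilon_g$ much smaller than the Margulis constant $\mu$. For a primitive geodesic $\gamma$ of length less than $\epsilon_g$, the Margulis tube $T_\gamma$ is a solid torus whose radius $R(\epsilon_g)$ can be made arbitrarily large by shrinking $\epsilon_g$. The monotonicity formula for minimal surfaces, together with the intrinsic Gauss--Bonnet area bound $\CH^2_M(F) \le 2\pi|\chi(F)|$, bounds the number of components of $F \cap T_\gamma$ by a constant depending only on $g$; since $T_\gamma$ is a solid torus, each component is topologically a disk or an annulus.

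The heart of the argument is the topological cleanup of $F\cap T_\gamma$. Disks and trivial annular components (whose cores bound disks in $T_\gamma$) can be removed by isotoping $F$ out of $T_\gamma$, exploiting the mean convexity of $\bnd T_\gamma$; the strong irreducibility of $S$ enters here through \fullref{lem:disks-in-strongly} to ensure that compressing disks lie on the correct side of $F$. Essential annular components have core freely homotopic in $T_\gamma$ to some power $\gamma^n$; a covering argument applied to the cyclic cover of $M$ associated to $\langle\gamma\rangle$, combined with the rigidity of strongly irreducible splittings, forces $|n|=1$. After cleanup, $F\cap T_\gamma$ is a collection of annuli each encircling $\gamma$ exactly once, and $F$ may be isotoped inside $T_\gamma$ to contain $\gamma$ itself. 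Since Margulis tubes of distinct short geodesics are disjoint, the local isotopies are independent and can be performed simultaneously; taking at most two disjoint parallel copies of $F$ (one to carry geodesics that $F$ already meets, another pushed into $T_\gamma$ for the remaining tubes) yields the required collection $S_1,\dots,S_k$.

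The principal obstacle is the cleanup step. The minimal surface furnished by Pitts--Rubinstein need not be stable, so Schoen-type curvature estimates for stable minimal surfaces do not apply directly. Controlling $F\cap T_\gamma$ therefore requires simultaneously exploiting the ambient negative curvature of $M$, the intrinsic area bound on $F$, and the rigidity coming from strong irreducibility. Ruling out essential annuli with $|n|>1$ is particularly delicate, and treating the case where $F$ is initially disjoint from $T_\gamma$ requires showing that $\gamma$ sits \emph{standardly} in the handlebody component of $M\setminus F$ that contains it, so that a parallel copy of $F$ can be swept through $T_\gamma$ to carry $\gamma$.
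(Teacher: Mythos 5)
Your approach and the paper's agree only on the first step: invoke Pitts--Rubinstein to replace $S$ by a minimal surface $F$. After that they diverge completely, and the divergence matters.

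The paper does \emph{not} analyze $F\cap T_\gamma$ inside Margulis tubes at all. Instead it cuts $M$ along the minimal surface $F$ into two handlebodies $U$ and $V$ with minimal (hence mean-convex) boundary, and then appeals to a theorem of Alexander--Berg--Bishop: the universal cover of each such handlebody, equipped with the induced length metric, is a CAT$(-1)$ space. This single structural fact is the engine of the whole argument. Once one knows $\tilde U$ is CAT$(-1)$, the synthetic comparison-geometry proof (from \cite{Souto-unknotting}) that short curves in a genuinely hyperbolic handlebody are parallel to the boundary goes through verbatim for $U$, and the same for $V$. The theorem then follows immediately. Your proposal never invokes the mean-convexity of $F$ to get a curvature bound on the cut-open pieces, which is precisely the idea that makes the paper's argument short.

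Your alternative route, modeled on Otal's Margulis-tube analysis for fibered manifolds, is not obviously salvageable as written, for reasons beyond the ones you flag yourself. The claim that the number of components of $F\cap T_\gamma$ is bounded, via monotonicity plus Gauss--Bonnet, controls only components that penetrate deep into the tube; an a priori bound on the total count (including shallow annuli near $\bnd T_\gamma$) does not follow, and neither does the claim that each component is a disk or annulus rather than a higher-complexity planar piece. The cleanup step is the real problem: isotoping away inessential intersections destroys minimality of $F$, at which point the mean-convexity of $\bnd T_\gamma$ no longer interacts with $F$ in a controlled way, and \fullref{lem:disks-in-strongly} does not obviously supply the replacement leverage you attribute to it (it concerns disks with boundary on $S$, not the pieces $F\cap T_\gamma$). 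Ruling out winding number $|n|>1$ and handling geodesics that $F$ misses entirely are, as you say, delicate; the paper avoids both issues altogether because in the CAT$(-1)$ handlebody the ``short curves are boundary-parallel'' statement is proved synthetically without ever decomposing along tubes. In short: different route, genuine gaps, and the key idea (CAT$(-1)$ geometry of the minimal-boundary handlebody via Alexander--Berg--Bishop) is missing.
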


\begin{proof}
The idea of the proof of \fullref{unknotting} is the following. By
Pitts--Rubinstein's \fullref{Pitts-Rubinstein}, we can associate to the
surface $S$ a minimal surface $F$. For the sake of simplicity assume that
$F$ is isotopic to $S$ and contained in the thick part of $M$. Cutting $M$
along $S$ we obtain two handlebodies; let $U$ be the metric completion of
one of these handlebodies and $\tilde U$ its universal cover. Then, by a
theorem of Alexander--Berg--Bishop \cite{ABB}, $\tilde U$ is a
CAT$(-1)$-space and hence, from a synthetic point of view, it behaves very
much like hyperbolic space. In particular, one can use word-by-word the
proof in (Souto \cite{Souto-unknotting}) that short curves in hyperbolic handlebodies
are parallel to the boundary and obtain that short curves in the handlebody
$U$ are parallel to the boundary $\bnd U=F\simeq S$.
\end{proof}

\fullref{unknotting} opens the door to analogous results to
\fullref{volume1} for Heegaard splittings. In some sense, the main
difficulty is to decide what has to replace the translation length
$l_\CP(f)$.

\begin{defi*}
The {\em handlebody set\/} $\CH(H)$ of a handlebody $H$ is the subset of the
pants complex $\CP(\bnd H)$ of its boundary consisting of those pants
decompositions $P$ with the following property: There is a collection
$\calD$ of properly embedded disks in $H$ with boundary in $P$ and such
that $H\setminus\calD$ is homeomorphic to a collection of solid tori.
\end{defi*}

Given a Heegaard splitting $M=U\cup V$ of a 3--manifold we can now define
its distance in the pants complex of the Heegaard surface $S$ as follows:
$$\delta_\CP(U,V)=\min\{d_{\CP(S)}(P_U,P_V)\vert
P_U\in\CH(U),P_V\in\CH(P_V)\}.$$
\fullref{unknotting} and similar geometric limit arguments as the ones used
in the proof of \fullref{volume1} yield the following theorem.

\begin{sat}[Brock-Souto]\label{volume2}
For every $g$ there is a constant $L_g>0$ with
$$L_g^{-1}\delta_\CP(U,V)\le\vol(M)\le L_g\delta_\CP(U,V)$$
for every genus $g$ strongly irreducible Heegaard spitting $M=U\cup V$ of a
hyperbolic 3--manifold $M$.
\end{sat}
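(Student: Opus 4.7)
The plan is to prove the upper and lower bounds separately, with the upper bound being essentially combinatorial and the lower bound requiring the geometric-limit machinery that drove the proof of \fullref{volume1}, together with \fullref{unknotting} as a replacement for Otal's unknotting theorem.

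\textbf{Upper bound.} Choose $P_U\in\CH(U)$ and $P_V\in\CH(V)$ realizing $\delta_\CP(U,V)$, together with a geodesic path $P_U=P_0,P_1,\dots,P_d=P_V$ in $\CP(S)$. Because $P_U\in\CH(U)$, there is a disk system in $U$ with boundary $P_U$ cutting $U$ into solid tori of uniformly bounded combinatorial type, and analogously for $V$; these contribute a bounded (in $g$) number of tetrahedra to a triangulation of the two handlebodies. Between $P_{i-1}$ and $P_i$ one inserts the standard ``elementary-move'' cobordism in $S\times[i-1,i]$, each using a uniformly bounded number of tetrahedra. Stacking yields a triangulation of $M$ with $O_g(d)$ simplices, and Gromov--Thurston then gives $\vol(M)\le L_g\,\delta_\CP(U,V)$.

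\textbf{Lower bound, thick case.} Suppose first that $\inj(M)\ge\epsilon$. By \fullref{Pitts-Rubinstein}, $S$ is isotopic either to a minimal surface $F$ or to the boundary of a regular neighborhood of a one-sided minimal surface with a vertical 1--handle attached. Interpolating with $CAT(-1)$ surfaces (as in the fibered case of \fullref{volume1}) one obtains a sweepout of $M$ by surfaces $F_t$ homotopic to $S$, starting near a spine of $U$ and ending near a spine of $V$. The bounded diameter lemma together with $\inj(M)\ge\epsilon$ implies that each $F_t$ is close to an embedded surface $F'_t$ via a short homotopy; moreover there is $c=c(g,\epsilon)>0$ such that whenever $t_{i+1}-t_i\ge c$, the embedded surfaces $F'_{t_i}$ and $F'_{t_{i+1}}$ cobound a product region of volume at least $V_0>0$. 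Equip each $F'_{t_i}$ with a shortest pants decomposition $Q_i$. A standard geometric-limit argument (the statement (*) in the proof of \fullref{volume1}) gives $d_{\CP(S)}(Q_i,Q_{i+1})\le L_0$. Since $F'_{t_0}$ is contained in a collar of a spine of $U$ and $F'_{t_N}$ in a collar of a spine of $V$, one verifies that $Q_0\in\CH(U)$ and $Q_N\in\CH(V)$. Thus
$$\delta_\CP(U,V)\le\sum_{i=0}^{N-1}d_{\CP(S)}(Q_i,Q_{i+1})\le L_0 N\le \frac{L_0}{V_0}\vol(M).$$

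\textbf{Lower bound, general case, and main obstacle.} To remove the thickness hypothesis, apply \fullref{unknotting} to obtain surfaces $S_1,\dots,S_k$ parallel to $S$ containing every primitive geodesic shorter than $\epsilon_g$. Let $\Gamma$ be the union of these short geodesics and $M^*=M\setminus\Gamma$. The Hyperbolization Theorem plus Hodgson--Kerckhoff cone-deformation theory provide on $M^*$ a complete finite volume hyperbolic metric with $\vol(M^*)\ge(1-o(1))\vol(M)$ and with every closed geodesic of length $\ge\epsilon_g/2$. Isotoping the $S_i$ into the new cusps, removing parallel copies and pulling tight using \fullref{thm:incompressible-minimal}, one cuts $M^*$ into product pockets $V_j=F_j\times(0,1)$, each of which is thick in the above sense. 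The thick-case argument inside each pocket yields pants decompositions on the two sides of $V_j$ at $\CP(F_j)$--distance $\le L_1\vol(V_j)$; piecing them together along consecutive pockets and remembering that the two outermost boundary pants decompositions lie in $\CH(U)$ and $\CH(V)$ gives the lower bound. The main obstacle, and the most delicate piece of bookkeeping, is the pocket-to-pocket concatenation: one must project pants decompositions of boundary surfaces of pockets to pants decompositions of the ambient Heegaard surface $S$ in a way compatible with elementary moves, and control the cost of doing so independently of how degenerate the geometry of the cusps is. This is achieved by a subsurface-projection argument in the pants complex together with the observation that, because \fullref{unknotting} confines the short geodesics to surfaces parallel to $S$, the pocket decomposition is canonically aligned with the Heegaard structure.
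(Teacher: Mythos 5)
Your overall plan---upper bound by triangulation, lower bound first in the thick case and then in general via \fullref{unknotting} and a pocket decomposition---is exactly the route the paper indicates: the paper gives no proof at all beyond the remark that ``\fullref{unknotting} and similar geometric limit arguments as the ones used in the proof of \fullref{volume1} yield the following theorem,'' and your outline fills that in along the intended lines.

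However, your thick-case lower bound has a genuine gap precisely at the place where the Heegaard situation differs from the fibered one. You claim to sweep $M$ out by $CAT(-1)$ surfaces $F_t$ homotopic to $S$ ``starting near a spine of $U$ and ending near a spine of $V$,'' and then to conclude that the first and last pants decompositions $Q_0$ and $Q_N$ lie in $\CH(U)$ and $\CH(V)$. Both steps are problematic. In the fibered case one gets $CAT(-1)$ surfaces through every point because the fiber is incompressible; in the Heegaard case, a surface near a spine of a handlebody is compressible, hence carries an essential curve with a short compressing disk, and cannot have intrinsic curvature $\le -1$ there (a short essential curve on a $CAT(-1)$ surface bounds a large-area disk on the surface, which is impossible near a spine). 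So the bounded diameter lemma cannot be applied to your $F_t$ once they go deep into $U$ or $V$, and in particular the geometric-limit step that keeps consecutive pants decompositions at bounded pants distance breaks down at the two ends. Moreover, even granting a family of embedded surfaces, the claim that a shortest pants decomposition on a surface contained in a small neighborhood of a spine lies in the handlebody set is not automatic: verifying membership in $\CH(U)$ is the new combinatorial-versus-geometric ingredient that has no counterpart for mapping tori. The way to repair this, consistent with the paper's hints in the proof of \fullref{unknotting}, is to cut along the Pitts--Rubinstein minimal surface $F$ and work inside each closed-up handlebody $U$ (respectively $V$), using the Alexander--Berg--Bishop result that its universal cover is $CAT(-1)$; one can then argue that the disk set of $U$ is geometrically visible on $F$ and produce a pants decomposition in $\CH(U)$ at controlled pants-distance from the decompositions one sees along the thick core. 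By contrast, the ``main obstacle'' you single out---subsurface-projection bookkeeping when concatenating pockets---is comparatively benign: as you yourself observe, \fullref{unknotting} forces the pockets to be nested and parallel to $S$, so the concatenation inequality is the straightforward analogue of the fibered-case inequality $l_{\CP(\Sigma_g)}\le\sum_i d_{\CP(F_i)}(P_i^-,P_i^+)$. The place that needs real new work is the two handlebody ends, not the middle.
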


\section{Hyperbolic manifolds with given rank}\label{sec:rank2}
In the last section we studied the relation between the geometry of
hyperbolic 3--manifolds and the combinatorics of Heegaard splittings. In
this section we sketch some results about what can be said about the
geometry of a hyperbolic 3--manifold with given rank of the fundamental
group. We will be mainly interested in the following two conjectures which
assert that (A) the radius of the largest embedded ball in a closed
hyperbolic 3--manifold and (B) its Heegaard genus are bounded from above in
terms of the rank of the fundamental group.

\begin{conj}[McMullen]\label{conjecturea}
For all $k$ there is some $R$ with
$$\inj(M,x)\le R$$
for all $x$ in a closed hyperbolic 3--manifold $M$ with $\rank(\pi_1(M))=k$.
\end{conj}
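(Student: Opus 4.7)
The plan is to argue by contradiction using minimal length carrier graphs and geometric limits, in the spirit of the proof of \fullref{rank-mapping1} and of White's \fullref{white-inj}. Suppose no such $R$ exists; then there is a sequence of closed hyperbolic 3--manifolds $(M_i)$ with $\rank(\pi_1(M_i))=k$ and points $x_i\in M_i$ with $\inj(M_i,x_i)\to\infty$. Passing to a subsequence, the pointed manifolds $(M_i,x_i)$ converge geometrically to $(\BH^3,x_\infty)$: no non-trivial covering transformation can persist in the limit when the injectivity radius at the base point blows up. In particular arbitrarily large balls around $x_i$ are isometric to hyperbolic balls.

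For each $i$ I would choose a minimal length carrier graph $f_i\co X_i\to M_i$ with $\rank(\pi_1(X_i))=k$. By \fullref{minimal} the graph $X_i$ is trivalent with $3(k-1)$ edges and its image is piecewise geodesic with $\frac{2\pi}{3}$ corners. \fullref{bounded-chains} supplies a chain $X_i^{(0)}=Y_0^i\subset\dots\subset Y_{r_i}^i=X_i$ in which each step has relative length at most a universal constant $L=L(k)$. I would then run geometric convergence jointly with this chain: after translating by suitable deck transformations and passing to subsequences, each $Y_j^i$ converges to a subgraph $Y_j^\infty$ sitting inside a hyperbolic limit $N_\infty^j$, and the induced homomorphism $\pi_1(Y_j^\infty)\to\pi_1(N_\infty^j)$ has finitely generated image. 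By \fullref{ugly}, \fullref{tameness} and \fullref{super-ugly} these images should be forced to be convex-cocompact, and the convex-cocompactness should persist to $M_i$ for large $i$.

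The decisive and hardest step is to make the point $x_i$ conflict with this picture. Since a minimal carrier graph has geodesic edges meeting at $\frac{2\pi}{3}$ corners, every essential loop in its image has length at least twice the injectivity radius at its closest point to $x_i$; hence $f_i(X_i)$ must avoid a ball $B_{M_i}(x_i,R_i)$ with $R_i\to\infty$. If one could show, using the chain decomposition and the convex-cocompactness supplied by the previous step, that the carrier graph sits at uniformly bounded distance (relative to the thin part) from every point of $M_i$, one would obtain the desired contradiction. The main obstacle I anticipate lies precisely here: nothing a priori in the carrier-graph framework forces the image to come near any prescribed point. Overcoming this likely requires either a sweep-out argument in the spirit of \fullref{Pitts-Rubinstein}, producing through $x_i$ an essential loop whose length is controlled by $k$, or a volume-comparison argument exploiting that a thick embedded ball of large radius in a rank-$k$ 3--manifold would force hidden generators incompatible with the chain structure of \fullref{bounded-chains}.
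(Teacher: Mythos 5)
What you have been asked to prove is not a theorem but an open conjecture: it appears in \fullref{sec:rank2} labelled \fullref{conjecturea} and attributed to McMullen, and the paper offers no proof of it, only partial results (Agol's \fullref{Agol-genus2} for $k=2$ under a thickness hypothesis, and \fullref{rank3-structure}--\fullref{rank3-genus} together with \fullref{BCW} for $k=3$, again under a thickness hypothesis). So there is no paper proof to compare against; what you have produced is an honest attempt, and to your credit you explicitly flag where it breaks.

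The gap you identify is exactly the heart of the matter, and it is worth being precise about why. First, the geometric limit based at $x_i$ and the geometric limit based at a point of the carrier graph decouple completely. By White's \fullref{white-inj} the minimal carrier graph forces $\inj(M_i)\le L(k)$, and it does so by locating a short loop near the graph; but by hypothesis $\inj(M_i,x_i)\to\infty$, so $d_{M_i}(x_i,f_i(X_i))\to\infty$ automatically. Passing to the limit at $x_i$ gives $\BH^3$, passing to the limit at the carrier graph gives some other manifold, and \fullref{bounded-chains}, \fullref{ugly}, \fullref{tameness}, \fullref{super-ugly} speak only to the second limit. In other words, the carrier-graph machinery bounds $\min_x\inj(M,x)$; \fullref{conjecturea} is about $\max_x\inj(M,x)$, and nothing in the framework makes the influence of the generators reach the far-away deep point. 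Second, a subtler obstruction which explains why even the partial results in the paper carry an $\inj(M)\ge\epsilon$ hypothesis: without thickness, the geometric limits along the chain of \fullref{bounded-chains} need not have finitely generated fundamental group at each stage, and the applications of tameness and the \hyperlink{CThe}{Covering Theorem} that underlie \fullref{ugly} and \fullref{super-ugly} do not go through. Your two speculative remedies (a sweep-out through $x_i$, or a volume comparison) are reasonable directions but are not developed enough to touch either obstruction. Both the thin case for $k=2$ and all cases for $k\ge 4$ are genuinely open.
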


Observe that $\max_{x\in M}\inj(M,x)$ is the radius of the largest embedded
ball in $M$. McMullen's conjecture admits a suitable generalization to the
setting of infinite volume hyperbolic 3--manifolds. In fact, it was in this
setting in which McMullen's conjecture was first formulated because of its
implications to holomorphic dynamics. However, it follows from the work of
Gero Kleineidam and the author of this note that the general case can be
reduced to the closed case. Before going further we should recall the
injectivity radius itself, ie the minimum of the injectivity radius over
all points of the manifold, is bounded from above in terms of the rank by
White's \fullref{white-inj}.

\begin{conj}[Waldhausen]\label{conjectureb}
For every $k$ there is $g$ such that every closed 3--manifold with
$\rank(\pi_1(M))=k$ has Heegaard genus $g(M)\le g$.
\end{conj}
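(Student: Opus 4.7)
The plan is to apply Perelman's geometrization theorem and reduce, via the prime and JSJ decompositions, to the case of a single geometric piece, where the Seifert-fibered and hyperbolic cases are attacked separately.

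First I would pass to prime factors. Writing $M = M_1 \# \cdots \# M_n$, Grushko's theorem gives $k = \rank(\pi_1(M)) = \sum_i \rank(\pi_1(M_i))$; since every non-trivial prime factor (including $\BS^2\times\BS^1$) contributes at least $1$, we get $n \leq k$. Haken's additivity theorem gives $g(M) = \sum_i g(M_i)$, so a bound $g(M') \leq h(\rank(\pi_1(M')))$ on each prime factor yields $g(M) \leq k\cdot h(k)$, and we may assume $M$ is prime. For such $M$ consider the JSJ decomposition $M = N_1 \cup_T \cdots \cup_T N_r$ along incompressible tori. Using accessibility-type results for graphs of groups (Delzant--Potyagailo, Weidmann) applied to the induced splitting of $\pi_1(M)$, one bounds both the number $r$ of JSJ pieces and the rank of the image of each $\pi_1(N_i)$ in $\pi_1(M)$ in terms of $k$. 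Amalgamating Heegaard splittings of the pieces in the sense of Schultens (cf.\ \fullref{all-generalized}) then produces a Heegaard splitting of $M$ of genus at most $\sum_i g(N_i) + r - 1$, reducing the conjecture to a rank-to-genus bound on each geometric piece.

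By geometrization each $N_i$ is either Seifert fibered (possibly modelled on one of the small geometries $S^3$, $\BR^3$, Nil, Sol, $\widetilde{\PSL_2\BR}$) or hyperbolic with torus boundary. For the non-hyperbolic pieces, the fundamental groups admit explicit presentations in terms of Seifert invariants, and Boileau--Zieschang-type computations of the Heegaard genus of Seifert fibered spaces give the bound $g \leq h(\rank)$ directly (indeed in many cases rank and genus are explicitly computable). The main obstacle is the hyperbolic case. Here the strategy is to represent a minimal generating set of $\pi_1(N)$ by a minimal-length carrier graph $f:X\to N$ with $\rank(\pi_1(X)) = k$, and exploit the filtration $X^{(0)} = Y_0 \subset Y_1 \subset \cdots \subset Y_k = X$ furnished by \fullref{bounded-chains}, whose successive relative lengths are universally bounded. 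Combining this with the bounded diameter lemma for minimal surfaces, one would inductively thicken each $Y_i$ into a compact submanifold obtained from a regular neighborhood of $Y_{i-1}$'s thickening by attaching controlled tubes and bounded-genus Pitts--Rubinstein minimal surfaces along $\partial$.

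The hardest step will be controlling the topology of the complement of such a thickening: a priori the complement of a regular neighborhood of a carrier graph need not be a handlebody, and the carrier graph may live deep in the thin part where standard diameter bounds fail. Overcoming this should require a combination of \fullref{strongly} (to rule out pathological surgeries on candidate Heegaard surfaces) with Scharlemann-type rigidity for strongly irreducible splittings, plus a careful use of the \hyperlink{CThe}{Covering Theorem} to ensure that the subgroups generated by the intermediate graphs $Y_i$ are either convex-cocompact (in which case the thick convex hull has bounded-topology quotient) or virtually cover a compact piece. If this geometric construction of a bounded-genus Heegaard surface from a bounded-rank carrier graph can be made to work in every closed hyperbolic $3$-manifold, it closes the final and decisive case of the conjecture.
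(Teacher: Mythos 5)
There is no proof of this statement in the paper, and there cannot be one at the moment: the environment you were handed is a \emph{conjecture}. Waldhausen's question is presented in \fullref{sec:rank2} as an open problem, and the paper discusses only partial positive results---Agol's theorem for $\rank=2$ (\fullref{Agol-genus2}), and \fullref{rank3-structure} and \fullref{rank3-genus} for $\rank=3$, both of which require an a priori lower bound $\inj(M)\ge\epsilon$. The paper also records evidence that the naive equality $\rank=g$ fails (Boileau--Zieschang, Schultens--Weidmann, Abert--Nikolov). So what you have written is a research program, not a proof, and you honestly flag the decisive step yourself.

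On the substance: the reductions you make at the start (Grushko plus Haken additivity for prime factors, accessibility for the JSJ graph, amalgamation of Heegaard splittings of the pieces, and Boileau--Zieschang for the Seifert pieces) are standard and essentially correct, and they do reduce the problem to the hyperbolic case. But the hyperbolic case is exactly where the gap lies, and your own sketch lands on it. The filtration from \fullref{bounded-chains} and the thick convex-hulls work only give uniform control in the \emph{thick} part of the manifold; when a minimal-length carrier graph dips into the $\epsilon$-thin part---or when subgroups carried by intermediate subgraphs fail to be convex-cocompact---the diameter and volume estimates that feed the Pitts--Rubinstein construction break down, and the \hyperlink{CThe}{Covering Theorem} no longer tells you that the limiting pieces have handlebody complements. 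This is precisely why the paper's theorems all carry the hypothesis $\inj(M)\ge\epsilon$ and stop at rank $3$. Removing that hypothesis, or pushing to general $k$, is open; your outline identifies the obstruction but does not overcome it.
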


Waldhausen asked in fact if for every 3--manifold $\rank(\pi_1(M))=g(M)$.
However, the stronger form of the \fullref{conjectureb} was answered in the
negative by Boileau--Zieschang \cite{Boileau-Zieschang} who presented an
example of a Seifert-fibered 3--manifold $M$ with $g(M)=3$ and
$\rank(\pi_1(M))=2$. Examples of 3--manifolds with $g(M)=4k$ and
$\rank(\pi_1(M))=3k$ were constructed by Schultens-Weidmann
\cite{Schultens-Weidmann}. Recently Abert--Nikolov \cite{Abert-Nikolov} have
announced that there are also hyperbolic 3--manifolds with larger Heegaard
genus than rank.

\fullref{conjecturea} and \fullref{conjectureb}
are related by a result of Bachmann--Cooper--White
\cite{BCW} who proved the following theorem.

\begin{sat}[Bachmann--Cooper--White]\label{BCW}
Suppose that $M$ is a closed, orientable, connected Riemannian $3$--manifold
with all sectional curvatures less than or equal to $-1$ and with Heegaard
genus $g(M)$. Then
$$g(M)\ge\frac{\cosh(r)+1}2$$
where $r=\max_{x\in M}\inj_x(M)$.
\end{sat}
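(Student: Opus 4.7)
The plan is to sandwich the area of a well-chosen minimal surface between a hyperbolic monotonicity lower bound and a Gauss--Bonnet upper bound. Fix a minimal-genus Heegaard splitting $M=U\cup_\Sigma V$ with $g=g(M)$ and a point $x$ realising $\inj_x(M)=r$; then $B=B(x,r)$ embeds isometrically in $M$.

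First I would produce a minimal surface $F\subset M$ passing through $x$ with $\vert\chi(F)\vert\le 2g-2$. Passing from $\Sigma$ to a strongly irreducible generalised Heegaard splitting via the Scharlemann--Thompson theorem, I would sweep $M$ by Heegaard surfaces arranged so that each member of the family passes through $x$, and apply the Pitts--Rubinstein construction. The resulting surface $F$ is minimal, arises from a Heegaard surface by surgery, and therefore (by \fullref{strongly}) satisfies $\vert\chi(F)\vert\le\vert\chi(\Sigma)\vert=2g-2$, while containing $x$ by construction.

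Since the ambient sectional curvature is at most $-1$, the Gauss equation forces the intrinsic curvature of the minimal surface $F$ to satisfy $K^F\le-1$, and Gauss--Bonnet yields
\[
\mathrm{Area}(F)\le\int_F(-K^F)\,dA=2\pi\vert\chi(F)\vert\le 4\pi(g-1).
\]
The universal cover $\tilde M$ is $\mathrm{CAT}(-1)$, so the $\mathrm{CAT}(-1)$ monotonicity formula for minimal surfaces (the hyperbolic analogue of the Euclidean $\pi\rho^2$ bound, with a totally geodesic hyperbolic disc replacing the Euclidean disc) gives
\[
\mathrm{Area}(F\cap B(x,\rho))\ge 2\pi(\cosh\rho-1)
\]
for each $\rho\in(0,r]$. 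Setting $\rho=r$ and chaining the two bounds produces
\[
2\pi(\cosh r-1)\le\mathrm{Area}(F\cap B)\le\mathrm{Area}(F)\le 4\pi(g-1),
\]
which rearranges to the desired $g\ge(\cosh r+1)/2$.

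The hard part is the incidence condition $x\in F$. The Pitts--Rubinstein procedure applied to an arbitrary sweep-out gives no control over which points the output minimal surface meets, and forcing every Heegaard surface in the sweep-out to pass through $x$ requires checking that the pull-tight and interpolation steps of the min-max argument remain compatible with that constraint. A safer fallback is a dichotomy: if some Pitts--Rubinstein surface meets $B(x,r)$, then the monotonicity inequality based at the closest point of $F$ to $x$ still produces a $\cosh r$-type bound up to constants; otherwise $B$ lies inside a single handlebody of the splitting, and a ball-packing argument in the universal cover of that handlebody, whose fundamental group is free of rank $g$, recovers the conclusion.
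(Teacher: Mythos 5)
The paper states this theorem but does not prove it; it simply cites Bachman--Cooper--White, so I am assessing your argument on its own terms.

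Your overall architecture is the right one and the constants check out: for a minimal surface $F$ with $K^F\le -1$ (via the Gauss equation, since minimality forces $\det(\mathrm{II})\le 0$), Gauss--Bonnet gives $\mathrm{Area}(F)\le 2\pi\vert\chi(F)\vert\le 4\pi(g-1)$, and the hyperbolic monotonicity formula gives $\mathrm{Area}(F\cap B(p,\rho))\ge 2\pi(\cosh\rho-1)$ whenever $p\in F$ and $B(p,\rho)$ is embedded. Chaining these does produce $\cosh r+1\le 2g$. The trouble is exactly where you suspect: nothing in the Pitts--Rubinstein construction (\fullref{Pitts-Rubinstein}) lets you prescribe that the min-max surface pass through a chosen point $x$, and a constrained min-max "through $x$" is not available. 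Your two fallbacks do not close the gap. Basing monotonicity at the nearest point $p\in F$ to $x$ only yields $\mathrm{Area}(F)\ge 2\pi(\cosh(r-d(x,F))-1)$, and since $d(x,F)$ is a priori uncontrolled this degrades to nothing; the theorem has an exact constant, not a "$\cosh r$-type bound up to constants." The "ball-packing in the universal cover of the handlebody" idea is vacuous: the universal cover of a compact handlebody is a contractible open $3$-manifold and can contain isometrically embedded balls of arbitrarily large radius, and the only thing the disjointness of translates of $B(\tilde x,r)$ recovers is $\inj_x(M)\ge r$, which is the hypothesis.

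The step you are missing is to bound the min-max \emph{width} from below directly, without ever placing $x$ on the minimal surface. Take any sweepout $\{\Sigma_t\}$ of $M$ adapted to the splitting, with $\Sigma_t=\partial U_t$ and $U_t$ increasing from $\emptyset$ to $M$. The function $t\mapsto\mathrm{Vol}(U_t\cap B(x,r))$ is continuous, so by the intermediate value theorem some $\Sigma_{t_0}$ bisects $B(x,r)$ by volume. By the relative isoperimetric inequality in a ball of a manifold with sectional curvature $\le -1$ — where the volume-bisecting minimizer is the totally geodesic equatorial disc, of area $2\pi(\cosh r-1)$ — one gets $\mathrm{Area}(\Sigma_{t_0}\cap B(x,r))\ge 2\pi(\cosh r-1)$. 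Hence $\max_t\mathrm{Area}(\Sigma_t)\ge 2\pi(\cosh r-1)$ for \emph{every} sweepout, so the width $W$ is at least $2\pi(\cosh r-1)$. On the other side, Pitts--Rubinstein (after reducing to a strongly irreducible generalized splitting via \fullref{all-generalized}, which keeps all the surfaces of genus $\le g$) gives a minimal surface realizing the width whose area, with multiplicity, is controlled by $4\pi(g-1)$ via Gauss--Bonnet. This yields $2\pi(\cosh r-1)\le 4\pi(g-1)$ and hence $g\ge(\cosh r+1)/2$. Note that this route and your monotonicity route produce the \emph{same} sharp quantity $2\pi(\cosh r-1)$, but only the isoperimetric/width argument avoids the unresolvable incidence constraint.
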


The first positive result towards \fullref{conjecturea}
and \fullref{conjectureb} is due to Agol,
who proved the next theorem.

\begin{sat}[Agol]\label{Agol-genus2}
For every $\epsilon>0$ there is $V>0$ such that every hyperbolic 3--manifold
with injectivity radius $\inj(M)>\epsilon$, volume $\vol(M)>V$ and
$\rank(\pi_1(M))=2$ has Heegaard genus $g(M)=2$. In particular, the
injectivity radius at every point is bounded from above by $\arccosh(3)-1$.
\end{sat}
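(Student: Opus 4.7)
The plan is to argue by contradiction, combining carrier graphs, geometric limits, and the structural results of Sections 3 and 5. Suppose, for some $\epsilon>0$, that there is a sequence of closed hyperbolic 3--manifolds $(M_n)$ with $\rank(\pi_1(M_n))=2$, $\inj(M_n)\ge\epsilon$, $\vol(M_n)\to\infty$, but $g(M_n)\ge 3$. For each $n$, let $f_n\co X_n\to M_n$ be a minimal length carrier graph. By \fullref{minimal}, $X_n$ is trivalent of rank $2$, hence a theta graph whose three edges are mapped to geodesic arcs in $M_n$ meeting at angles $2\pi/3$ at two trivalent vertices. A small regular neighborhood $H_n$ of $f_n(X_n)$ is a genus-2 handlebody with $\pi_1(H_n)\to\pi_1(M_n)$ surjective.

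Fix base points $p_n\in f_n(X_n)$. By the uniform lower bound on injectivity radius we may pass to a subsequence so that $(M_n,p_n)$ converges geometrically to a pointed hyperbolic 3--manifold $(M_\infty,p_\infty)$. Using the bounded chain decomposition of \fullref{bounded-chains} inductively on the levels of the filtration, the rank-$2$ carrier graphs $f_n$ can be arranged to converge to a carrier graph $f_\infty\co X_\infty\to M_\infty$ whose image generates $\pi_1(M_\infty)$, so $\pi_1(M_\infty)$ is a free group of rank at most $2$. Since $\vol(M_n)\to\infty$, the limit $M_\infty$ has infinite volume, and \fullref{tameness} implies that $M_\infty$ is homeomorphic to the interior of a compact 3--manifold. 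Every end of $M_\infty$ is therefore either convex-cocompact or simply degenerate; the uniform thickness rules out the boundary-incompressible degenerate scenarios via \fullref{ugly} applied to the limit, and together with $\pi_1(M_\infty)$ being free of rank $\le 2$ this forces $M_\infty$ to be a convex-cocompact hyperbolic genus-2 handlebody.

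By \fullref{super-ugly}, for $n$ large enough the convex core of $M_\infty$ embeds almost isometrically as a genus-2 handlebody $\tilde H_n\subset M_n$ with $\pi_1(\tilde H_n)\to\pi_1(M_n)$ injective and its image convex-cocompact in $\pi_1(M_n)$. Since $\pi_1(M_n)$ has rank exactly $2$, a rank-$2$ free subgroup with convex-cocompact image in the closed manifold $M_n$ must be the whole group; hence the inclusion is surjective. A topological argument, using that $M_n$ is closed and irreducible and that the complement $V_n=M_n\setminus\tilde H_n$ has free fundamental group of rank at most $2$ (by an Euler characteristic count on $\partial\tilde H_n$ together with the Loop Theorem), identifies $V_n$ as a handlebody of genus $2$. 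This produces a genus-2 Heegaard splitting of $M_n$, contradicting $g(M_n)\ge 3$. The main obstacle in this plan is the identification of the geometric limit $M_\infty$ as precisely a convex-cocompact handlebody (which requires excluding thin degenerate limits and correctly bounding the rank of the limit group), and then promoting the geometric embedding of its convex core into $M_n$ to a genuine Heegaard splitting. Finally, the pointwise injectivity radius bound follows at once from \fullref{BCW}: with $g(M)=2$ one has $\cosh(r)+1\le 4$, so $r\le\arccosh(3)$, with the extra $-1$ produced by applying the inequality to a minimal surface sweeping past a maximal embedded ball rather than through its center.
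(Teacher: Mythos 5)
Your overall plan---contradiction, minimal length carrier graphs, geometric limits---follows the paper's proof, and the use of \fullref{bounded-chains} to get a bounded-length carrier graph in a rank-$2$ closed manifold is exactly right. But the middle of your argument has two substantive problems that cause it to break.

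First, the assertion that the limits $f_n$ ``can be arranged to converge to a carrier graph $f_\infty$ whose image generates $\pi_1(M_\infty)$'' is precisely what is in doubt, and it is here that all the real work occurs. In a geometric limit there is no a priori reason for the image of $\pi_1(X_\infty)$ to be all of $\pi_1(M_\infty)$. The paper establishes this by considering the cover $M'_\infty \to M_\infty$ determined by the image of $\pi_1(X_\infty)$: tameness shows $M'_\infty$ is a handlebody, and the \hyperlink{CThe}{Covering Theorem} is then used (together with the fact that a genus-$2$ surface does not nontrivially cover a surface) to force the covering to be trivial. Skipping this means you have not shown $M_\infty$ has rank-$2$ fundamental group at all.

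Second, and more seriously, you conclude that $M_\infty$ is a \emph{convex-cocompact} genus-$2$ handlebody and then feed it into \fullref{super-ugly}. This is exactly backwards. The paper's argument runs: if the handlebody were convex-cocompact, then \fullref{super-ugly} would make the maps $(f_i)_*\co\pi_1(X_i)\to\pi_1(M_i)$ injective for large $i$, which together with surjectivity (carrier graph) would force $\pi_1(M_i)$ to be free---impossible for a closed $3$--manifold. Hence $M_\infty$ (or rather $M'_\infty$) is \emph{not} convex-cocompact: it has a degenerate end. Your own sentence ``a rank-$2$ free subgroup with convex-cocompact image in the closed manifold $M_n$ must be the whole group'' contains the same contradiction in miniature: if the whole group is free, $M_n$ is not closed. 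So the invocation of \fullref{super-ugly} is not available, and the surjectivity you want does not come for free; it has to be obtained by pushing a compact core $C\subset M_\infty$ back to compact submanifolds $C_i\subset M_i$ and arguing directly.

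Relatedly, the final step---showing $M_i\setminus C_i$ is a handlebody---is not a straightforward Euler characteristic or Loop Theorem computation. The paper again uses the Covering Theorem: if $\bnd C_i$ were incompressible in $M_i\setminus C_i$, one gets a nontrivial homotopy of $\bnd C_i$ into itself supported in the complement, forcing a twisted interval bundle and hence a nontrivial homology class in the complement of $C_i$, contradicting surjectivity of $\pi_1(C_i)\to\pi_1(M_i)$. You should supply that argument (or a substitute for it) rather than gesture at an Euler characteristic count.

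The closing remark about $\arccosh(3)-1$ from \fullref{BCW} is fine in spirit (the inequality $g(M)\ge\frac{\cosh r+1}{2}$ with $g=2$ gives a universal bound on $\max_x\inj(M,x)$), though the paper does not spell out where the extra $-1$ comes from and your explanation of it is speculation; at any rate, this is peripheral to the core argument.
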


\begin{proof}
Agol's theorem is unfortunately not available in print. However, the idea
of the proof is not so difficult to explain. Assume that $(M_i)$ is a
sequence of hyperbolic 3--manifolds with $\inj(M_i)\ge\epsilon$,
$\rank(\pi_1(M_i))=2$ and $\vol(M_i)\to\infty$. A special case of
\fullref{bounded-chains} implies that there is for all $i$ a minimal length
carrier graph $f_i \co X_i\to M_i$ with length $l_{f_i \co X_i\to M_i}(X_i)$
bounded from above by some universal constant. Passing to a subsequence we
may assume that the manifold $M_i$ converge geometrically to a hyperbolic
manifold $M_\infty$, the graphs $X_i$ to a graph $X_\infty$ and the maps
$f_i$ to a map $f_\infty \co X_\infty\to M_\infty$.

We claim that $M_\infty$ is homeomorphic to a handlebody of genus $2$. In
fact, in order to see that this is the case, it suffices to observe that it
has infinite volume that that its fundamental group is generated by 2
elements. In other words, it suffices to prove that $f_\infty \co X_\infty\to
M_\infty$ is a carrier graph. In order to do so, we consider the covering
of $M_\infty'\to M_\infty$ determined by the image of $\pi_1(X_\infty)$.
The proof of the tameness conjecture by Agol \cite{Agol} and Calegari--Gabai
\cite{Calegari-Gabai} implies that $M_\infty'$ is homeomorphic to a
handlebody. Moreover, $M_\infty'$ is not convex-cocompact because otherwise
the homomorphism $(f_i)_* \co \pi_1(X_i)\to\pi_1(M_i)$ would be injective, and
hence $\pi_1(M_i)$ free, for large $i$. Since $M_\infty'$ is not
convex-cocompact, Thurston and Canary's \hyperlink{CThe}{Covering Theorem}
\cite{Canary-covering} implies that the covering $M_\infty'\to M_\infty$ is
finite-to-one; in fact this covering is trivial because $M_\infty'$ is a
handlebody of genus $2$ and a surface of genus $2$ does not cover any other
surface. This proves that $M_\infty$ itself is a handlebody.

Choose now $C\subset M_\infty$ a compact core, ie a compact submanifold
of $M_\infty$ such that $M_\infty\setminus C$ is homeomorphic to a product.
Pushing back the core $C$ to the approximating manifolds $M_i$ we obtain in
each $M_i$ a handlebody $C_i$. In order to conclude the proof of
\fullref{Agol-genus2} it suffices to show that its complement is a
handlebody as well. Furthermore, it suffices to show that $\bnd C_i$ is
compressible in $M_i\setminus C_i$. However, if this is not the case it is
possible to deduce from the \hyperlink{CThe}{Covering Theorem} that there is a non-trivial
homotopy from $\bnd C_i$ to itself supported in $M_i\setminus C_i$. In
particular, $M_i\setminus C_i$ is homeomorphic to a twisted interval bundle
and hence there is a non-trivial homology class supported in the complement
of $C_i$. This contradicts the assumption that $\pi_1(X_i)$, and hence
$\pi_1(C_i)$, surjects onto $\pi_1(M_i)$. Hence $\bnd C_i$ is, for large
$i$, compressible in $M_i\setminus C_i$ and the latter is a handlebody.
This concludes the sketch of the proof of \fullref{Agol-genus2}.
\end{proof}

As we saw during the sketch of the proof the case of $\rank(\pi_1(M))=2$ is
quite particular because of two reasons.
\begin{itemize}
\item If $M$ is a thick hyperbolic 3--manifold with $\rank(\pi_1(M))=2$ then
there is a carrier graph whose length is uniformly bounded from above.
\item If $M$ is a thick non-compact complete hyperbolic 3--manifold whose
fundamental group is generated by two elements then $M$ is a handlebody.
\end{itemize}
This two facts were heavily used in the proof of \fullref{Agol-genus2}. If
the rank of $\pi_1(M)$ is higher than two, then both statements fail,
However, using a similar strategy as in the proof of \fullref{Agol-genus2},
together with the facts about carrier graphs explained in
\fullref{sec:carrier} it is possible to prove the following claim: Given a
sequence $(M_i)$ of closed hyperbolic 3--manifolds with $\inj(M_i)>\epsilon$
and $\rank(\pi_1(M))=3$ there is a compact, atoroidal and irreducible
3--manifold $N$ and a subsequence $(M_{i_j})$ such that for all $j$ the
manifold $M_{i_j}$ contains a compact submanifold homeomorphic to $N$ whose
complement is a union of handlebodies. In particular, one obtains the
following structure theorem of those manifolds whose fundamental group has
rank 3.

\begin{sat}[Souto \cite{rankII}]\label{rank3-structure}
For all positive $\epsilon$ there is a finite collection $N_1,\dots,N_k$ of
compact, atoroidal and irreducible 3--manifolds such that every closed
hyperbolic 3--manifold $M$ with $\inj(M)>\epsilon$ and $\rank(\pi_1(M))=3$
contains a compact submanifold $N$ homeomorphic $N_i$ for some $i$ such
that $M\setminus N$ is a union of handlebodies.
\end{sat}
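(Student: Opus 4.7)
The plan is a proof by contradiction via nested geometric limits of carrier graphs, generalizing the strategy used for Theorem~\ref{Agol-genus2}. Suppose the conclusion fails; then one can extract a sequence $(M_i)$ of closed hyperbolic $3$-manifolds with $\inj(M_i) > \epsilon$ and $\rank(\pi_1(M_i)) = 3$ in which no subsequence produces homeomorphic compact submanifolds $N^i$ of the required form. For each $i$, pick a minimal length carrier graph $f_i \colon X_i \to M_i$. By \fullref{minimal}, $X_i$ is trivalent with exactly $3(\rank-1) = 6$ edges, so after passing to a subsequence the combinatorial type $X_i = X$ is fixed. By \fullref{bounded-chains}, there is a chain $X^{(0)} = Y_0^i \subset Y_1^i \subset \cdots \subset Y_{k_i}^i = X$ with $l_{f_i,\rel(Y_{j-1}^i)}(Y_j^i) \le L$; since $k_i \le 6$ and each pair $(Y_{j-1}^i, Y_j^i)$ realizes only finitely many combinatorial types, a further subsequence fixes the chain length $k$ and all combinatorial data of the subgraph filtration.

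I would then run a nested geometric-limit argument along this filtration. At step $j$, place basepoints in $f_i(Y_j^i)$ and use the thickness $\inj(M_i) > \epsilon$ to extract a geometric limit $M_\infty^{(j)}$ together with a limit graph $Y_j^\infty \subset M_\infty^{(j)}$; the relative-length bound forces the new edges of $Y_j^i$ to stay within a bounded neighborhood of the thick convex hull of the previous piece, so everything converges. By \fullref{tameness} the cover of $M_\infty^{(j)}$ corresponding to the image of $\pi_1(Y_j^\infty)$ is topologically tame, and the Covering Theorem combined with \fullref{ugly} forces its ends to be convex-cocompact: a simply degenerate end would project finite-to-one onto an end of $M_\infty^{(j)}$, and pulling this back through the almost-isometric embeddings of \fullref{super-ugly} would make $\pi_1(M_i)$ free for large $i$, contradicting the closedness of $M_i$. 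Each such cover is therefore a handlebody of rank at most $3$; taking a compact core at every stage and transporting it back through \fullref{super-ugly} produces, for each large $i$, a compact submanifold $N^i \subset M_i$ with $\pi_1(N^i) \twoheadrightarrow \pi_1(M_i)$ and whose topological type lies in a finite set determined by $X$ and the subgraph filtration.

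The hard part is showing that each component $C$ of $M_i \setminus N^i$ is itself a handlebody. If $\partial C$ were incompressible in $C$, then the Covering Theorem applied to the cover of $M_i$ associated with $\pi_1(N^i)\cdot\pi_1(C)$ would exhibit a simply degenerate end covering an end of $M_\infty^{(k)}$ of twisted $I$-bundle type, producing an element of $\pi_1(M_i)$ outside the image of $\pi_1(N^i)$ and contradicting rank-surjectivity; this is precisely the mechanism closing the proof of \fullref{Agol-genus2}. Hence $\partial C$ compresses in $C$, and since $\pi_1(C)$ is free, the Loop theorem and iterated compression realize $C$ as a handlebody. The resulting decomposition makes $N^i$ atoroidal and irreducible, and, combined with the finiteness of its topological type along the subsequence, contradicts our initial assumption. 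The principal obstacle is the nesting: unlike the rank-$2$ case, where a single bounded-length carrier graph governs everything, here up to six successive layers produced by \fullref{bounded-chains} must be assembled, and a delicate use of the Covering Theorem at each stage is needed to ensure that the handlebody ends appearing in the successive geometric limits correspond to honest handlebody complements of $N^i$ inside $M_i$.
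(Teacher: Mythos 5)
Your overall plan --- minimal-length carrier graphs, the chain from \fullref{bounded-chains}, nested geometric limits, tameness and the \hyperlink{CThe}{Covering Theorem}, and a compactness argument to produce the finite list --- matches the strategy the paper hints at: Souto says explicitly that one argues ``using a similar strategy as in the proof of \fullref{Agol-genus2}, together with the facts about carrier graphs explained in \fullref{sec:carrier}'' and refers to \cite{rankII} for the details, so the architecture you describe is the intended one. However, two of the intermediate steps are off, and both concern exactly the mechanism that makes rank three genuinely harder than rank two.

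The claim ``each such cover is therefore a handlebody of rank at most $3$'' overreaches. For a proper subgraph $Y_j\subsetneq X$ the image of $\pi_1(Y_j^\infty)$ is discrete, torsion-free, generated by at most two elements and contains no $\BZ^2$, so it is free and the corresponding cover is indeed a handlebody. But at the top stage the image of $\pi_1(X_\infty)$ is only $3$-generated and need not be free: the limit $M_\infty^{(k)}$ can be an interval bundle, a link complement, or any number of tame, thick hyperbolic $3$-manifolds with non-free fundamental group, and its compact core is precisely the $N$ of the statement. The entire content of the theorem is that $N$ ranges over a finite list of compact, atoroidal, irreducible $3$-manifolds, \emph{not} just handlebodies; if it were always a handlebody the statement would collapse into the Heegaard-genus bound of \fullref{rank3-genus}, which is stated as a corollary, not as the structure theorem itself. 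Separately, the convex-cocompactness mechanism you propose runs the wrong way: the finite-to-one projection is the \emph{conclusion} of the \hyperlink{CThe}{Covering Theorem}, not a contradiction, and \fullref{super-ugly} takes convex-cocompactness as a \emph{hypothesis}, so one cannot ``pull a degenerate end back through it'' to deduce that $\pi_1(M_i)$ is free. The correct move, mimicking the proof of \fullref{rank-mapping1}, is to first argue that the image of $\pi_1(Y_j^\infty)$ has infinite index in $\pi_1(M_\infty^{(j)})$ and then extract convex-cocompactness from the Covering Theorem --- but since $M_\infty^{(j)}$ is not a priori a handlebody or a trivial interval bundle, \fullref{ugly} cannot simply be cited; one must exploit the topological structure of $M_\infty^{(j)}$ established at the earlier stages of the filtration. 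This is exactly where the rank-three argument departs from both the rank-two case and the fibered case, and your proposal glosses over it.
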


From \fullref{rank3-structure} the next theorem follows.

\begin{sat}\label{rank3-genus}
For every $\epsilon>0$ there is some $g$ such that every closed hyperbolic
3--manifold $M$ with $\inj(M)>\epsilon$ and $\rank(\pi_1(M))= 3$ has
Heegaard genus $g(M)\le g$.
\end{sat}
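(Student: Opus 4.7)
The plan is to invoke \fullref{rank3-structure} and then conclude by an elementary amalgamation argument. Given $\epsilon > 0$, that theorem furnishes a finite family $N_1, \ldots, N_k$ of compact, atoroidal, irreducible 3--manifolds. Since each $N_i$ is compact, Moise \cite{Moise} gives a Heegaard splitting of $N_i$, so I may set $g_0 := \max_i g(N_i) < \infty$. I will show that $g(M) \le g_0$ for every closed hyperbolic 3--manifold $M$ with $\inj(M) > \epsilon$ and $\rank(\pi_1(M)) = 3$, which establishes the theorem.

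Let such an $M$ be given. By \fullref{rank3-structure} there is a compact submanifold $N \subset M$ homeomorphic to some $N_i$ whose complement $\overline{M \setminus N} = H_1 \sqcup \cdots \sqcup H_m$ is a disjoint union of handlebodies, each $H_j$ glued to $N$ along its connected boundary $\partial H_j$, which is a component of $\partial N$. Fix any Heegaard splitting $N = U \cup V$ of genus at most $g_0$ with common exterior boundary $\Sigma$; the interior boundaries $\partial_i U$ and $\partial_i V$ then partition $\partial N$.

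The next step is to absorb each handlebody $H_j$ into $U$ or $V$ according to whether $\partial H_j$ is a component of $\partial_i U$ or of $\partial_i V$. The routine fact needed here is that gluing a handlebody to a compression body along an entire component of its interior boundary produces another compression body, with unchanged exterior boundary and with the glued component removed from the interior boundary: the surjectivity of $\pi_1(\partial_e) \to \pi_1(\cdot)$ is preserved by van Kampen's theorem, and irreducibility is preserved because interior boundary components of a compression body are incompressible. Carrying out all the absorptions yields compression bodies $U' \supset U$ and $V' \supset V$ with empty interior boundary and common exterior boundary $\Sigma$; hence both are handlebodies of genus $g(\Sigma) \le g_0$, and $M = U' \cup V'$ is a Heegaard splitting of $M$ of genus at most $g_0$.

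The only nontrivial ingredient is \fullref{rank3-structure} itself: the deep geometric content---carrier graphs, geometric limits, the \hyperlink{CThe}{Covering Theorem} and tameness---has already been packaged there, and the deduction of \fullref{rank3-genus} from it is purely topological, so no real obstacle remains in the argument above.
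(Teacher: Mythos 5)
Your proof is correct and follows essentially the same route as the paper: invoke \fullref{rank3-structure}, set $g=\max_i g(N_i)$, and extend a minimal genus Heegaard splitting of $N$ over the handlebody complement. The paper compresses the final step into the single assertion that ``every Heegaard splitting of $N$ extends to a Heegaard splitting of $M$,'' and your absorption argument (gluing each handlebody $H_j$ into $U$ or $V$ along the appropriate interior boundary component, which keeps them compression bodies and eventually makes them handlebodies) is precisely the standard justification of that assertion.
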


\begin{proof}
Given $\epsilon$ positive let $N_1,\dots,N_k$ be the finite collection of
manifolds provided by \fullref{rank3-structure} and let
$$g=\max\{g(N_1),\dots,g(N_k)\}.$$
If $M$ is a closed hyperbolic 3--manifold with $\inj(M)\ge\epsilon$ and
$\rank(\pi_1(M))=3$ then, the \fullref{rank3-structure}, the manifold $M$
contains a submanifold $N$ homeomorphic to some $N_i$ such that $M\setminus
N$ is a collection of handlebodies. In particular, every Heegaard splitting
of $N$ extends to a Heegaard splitting of $M$; hence $g(M)\le
g(N)=g(N_i)\le g$.
\end{proof}

The next theorem follows from \fullref{BCW}.

\begin{sat}
For all $\epsilon$ positive there is $R$ such that for every closed
hyperbolic $3$-manifold $M$ with $\inj(M)\ge\epsilon$ and
$\rank(\pi_1(M))=3$ and for every $x\in M$ one has $\inj(M,x)\le R$.
\end{sat}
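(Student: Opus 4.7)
The plan is to combine the two results already established in this section: Theorem \ref{rank3-genus}, which bounds the Heegaard genus of a thick hyperbolic $3$--manifold with rank $3$ fundamental group in terms of the lower bound on the injectivity radius, and Theorem \ref{BCW} of Bachmann--Cooper--White, which bounds the maximal embedded ball radius from above in terms of the Heegaard genus for any manifold with sectional curvatures $\le -1$. Since a hyperbolic $3$--manifold has constant sectional curvature $-1$, the hypothesis of Theorem \ref{BCW} is satisfied, and the composition of the two bounds gives exactly the desired statement.

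More concretely, first I would invoke Theorem \ref{rank3-genus} with the given $\epsilon > 0$ to produce a constant $g = g(\epsilon)$ such that every closed hyperbolic $3$--manifold $M$ with $\inj(M) \ge \epsilon$ and $\rank(\pi_1(M)) = 3$ satisfies $g(M) \le g$. Next, for such $M$, Theorem \ref{BCW} applied to the hyperbolic metric (all sectional curvatures equal to $-1$, hence in particular $\le -1$) gives
\[
\frac{\cosh(r) + 1}{2} \le g(M) \le g,
\]
where $r = \max_{x \in M} \inj(M,x)$. Solving for $r$ yields $r \le \arccosh(2g - 1)$. Thus setting $R := \arccosh(2g(\epsilon) - 1)$ produces a constant depending only on $\epsilon$ with the property that $\inj(M,x) \le r \le R$ for every $x \in M$, as required.

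There is essentially no obstacle: both ingredients are already in place, and the argument is a one-line combination. The only mildly subtle point is to note that Theorem \ref{BCW} is applicable because the curvature upper bound $\kappa \le -1$ holds for every hyperbolic $3$--manifold, so no rescaling or auxiliary estimate is needed. All the difficulty of the statement is absorbed into Theorem \ref{rank3-structure}, through which Theorem \ref{rank3-genus} was obtained; the present theorem is just the geometric shadow, via Bachmann--Cooper--White, of that structural result.
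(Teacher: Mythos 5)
Your proof is correct and is exactly the argument the paper intends: the paper simply states that the theorem ``follows from \fullref{BCW},'' meaning the combination of \fullref{rank3-genus} (genus bound from thickness and rank $3$) with the Bachmann--Cooper--White inequality $g(M)\ge(\cosh(r)+1)/2$, which is precisely what you carry out. The algebra $r\le\arccosh(2g-1)$ and the observation that the curvature hypothesis $\kappa\le -1$ holds automatically for a hyperbolic metric are both correct.
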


In some sense the claim of the \fullref{rank3-structure} may seem redundant
once one has \fullref{rank3-genus}. However one gets from the proof of
\fullref{rank3-structure} some additional information about the geometry of
the manifolds. For example, one can use this additional structure together
with Pitts--Rubinstein's \fullref{Pitts-Rubinstein} to prove the next theorem.

\begin{sat}
For all $\epsilon$ and $g$ there is a number $k$ such that every
$\epsilon$-thick hyperbolic 3--manifold $M$ with $\rank(\pi_1(M))=3$ admits
at most $k$ isotopy classes of Heegaard surfaces of genus $g$.
\end{sat}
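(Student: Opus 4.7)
The strategy is to combine \fullref{all-generalized}, \fullref{Pitts-Rubinstein} and \fullref{rank3-structure}, reducing the problem to a finiteness statement for minimal surfaces of bounded diameter and genus in a manifold whose underlying topology is controlled.

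First I would reduce to bounding the number of isotopy classes of strongly irreducible generalized Heegaard splittings $(\Sigma_I,\Sigma_H)$ in which all constituent surfaces have genus at most $g$. Indeed, \fullref{all-generalized} expresses any genus $g$ Heegaard splitting as a stabilization of an amalgamation of such a generalized splitting; the amalgamation is determined up to a bounded ambiguity by the generalized splitting, and at most $g$ stabilizations are needed to reach genus $g$, producing only a bounded (in $g$) number of new isotopy classes.

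Next I would invoke \fullref{rank3-structure} to write $M=N\cup H_1\cup\cdots\cup H_m$ with $N$ homeomorphic to one of finitely many fixed compact manifolds $N_1,\dots,N_r$ and the $H_j$ handlebodies. Since handlebodies contain no closed incompressible surfaces other than boundary parallel ones, every isotopy class of incompressible surface of genus at most $g$ in $M$ admits a representative inside $N$; Haken finiteness applied to each $N_i$ then bounds the number of isotopy classes of $\Sigma_I$ purely in terms of $\epsilon$ and $g$. So it remains, with $\Sigma_I$ fixed, to bound the isotopy classes of the strongly irreducible Heegaard surface $\Sigma_H$ inside each component of $M\setminus\Sigma_I$.

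By \fullref{Pitts-Rubinstein} such a $\Sigma_H$ is isotopic either to a minimal surface $F$ of genus at most $g$, or to $\bnd\CN(F)$ with a vertical $1$--handle attached to a minimal $F$ of smaller genus; in either case the bounded diameter lemma for minimal surfaces gives $\diam(F)\le C(\epsilon,g)$ while the curvature bound gives $\CH^2_M(F)\le 2\pi(2g-2)$. The main obstacle, and the heart of the argument, is then to bound the number of isotopy classes of these minimal surfaces. I would argue by contradiction via geometric limits: suppose there exists a sequence $M_j$ of $\epsilon$--thick rank $3$ hyperbolic manifolds containing pairwise non-isotopic minimal surfaces $F_1^j,\dots,F_{N_j}^j$ of genus at most $g$, with $N_j\to\infty$. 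Choosing basepoints $p_i^j\in F_i^j$ and passing to a diagonal subsequence, the compactness afforded by $\epsilon$--thickness and the topological control of \fullref{rank3-structure} ensure that the pointed manifolds $(M_j,p^j)$ converge geometrically to a hyperbolic limit $M_\infty$ of controlled topology. Allard's compactness theorem for stationary integral varifolds then yields a minimal surface limit of each subsequence $(F_i^j)_j$ inside $M_\infty$, and the fact that two embedded minimal surfaces sufficiently $C^2$--close in a manifold of bounded curvature are ambiently isotopic via a normal graph forces, for $j$ large, infinitely many of the $F_i^j$ to be isotopic in $M_j$, the desired contradiction.
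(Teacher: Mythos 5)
Your overall strategy---reduce via Scharlemann--Thompson (\fullref{all-generalized}) to strongly irreducible splittings, realize the strongly irreducible part minimally via Pitts--Rubinstein (\fullref{Pitts-Rubinstein}), bound the incompressible part $\Sigma_I$ using the core $N$ from \fullref{rank3-structure}, and then argue finiteness for $\Sigma_H$ by geometric limits---is exactly what the paper has in mind; the text itself gives only a one-sentence indication, so there is no detailed argument to compare against. The reduction through generalized splittings and the isotopy of $\Sigma_I$ into $N$ are fine, modulo a reference for the finiteness of isotopy classes of incompressible surfaces of bounded genus in the compact atoroidal irreducible manifolds $N_i$.

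The genuine gap is the concentration step in the geometric limit argument. The bounded diameter lemma bounds the diameter of each individual minimal surface, but says nothing about the mutual distances between $F_1^j,\dots,F_{N_j}^j$. Since $\epsilon$--thick rank--$3$ hyperbolic manifolds do not have bounded volume, these surfaces can a priori lie in widely separated regions of $M_j$; with a single basepoint $p^j$, only the surfaces staying near $p^j$ survive in the geometric limit, and your normal--graph pigeonhole yields no contradiction for the rest. What is missing is precisely the ``additional geometric information'' from the proof of \fullref{rank3-structure} to which the paper alludes: the core $N$ can be taken to have diameter bounded in terms of $\epsilon$, and the complementary handlebody pieces of $M\setminus N$ are geometrically controlled, looking increasingly like convex-cocompact handlebody ends as one moves away from $N$. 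One must then show that a minimal strongly irreducible genus--$g$ Heegaard surface of $M$ cannot lie at distance much larger than its own diameter from $N$; otherwise it would sit in a region almost isometric to a piece of a convex-cocompact handlebody end, where Scharlemann's structure theory for strongly irreducible surfaces in handlebodies together with the bounded diameter lemma rules it out. Only after every $F_i^j$ is forced into a ball of radius bounded in terms of $\epsilon$ and $g$ does your compactness argument run. A smaller point: to conclude that nearby minimal surfaces are ambiently isotopic you need smooth, multiplicity--one subsequential convergence, which for bounded-genus minimal surfaces comes from Choi--Schoen or White compactness rather than from Allard's theorem, since Allard alone gives only varifold convergence and cannot rule out multiplicity.
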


\begin{sat}
For all $\epsilon$ there is $d$ such that every closed $\epsilon$--thick
hyperbolic $3$-manifold $M$ which admits a genus $4$ Heegaard spitting with
at least distance $d$ in the curve complex has $\rank(\pi_1(M))=4$.
\end{sat}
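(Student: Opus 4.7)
The plan is to argue by contradiction via a geometric limit. Since $M$ admits a genus--$4$ Heegaard splitting one has $\rank(\pi_1(M))\le g(M)\le 4$, so the task is to exclude $\rank(\pi_1(M))\in\{2,3\}$ once $d$ is large. Suppose no such $d$ works; then there is a sequence of $\epsilon$--thick closed hyperbolic $3$--manifolds $M_n$ with genus--$4$ Heegaard surfaces $S_n$ of curve complex distance $d_n\to\infty$ and $\rank(\pi_1(M_n))\le 3$. For $d_n\ge 2$ each $S_n$ is strongly irreducible.

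The bounded--volume case is immediate: by Jorgensen--Thurston finiteness, after a subsequence $M_n=M$ is constant, and a closed hyperbolic $3$--manifold has only finitely many isotopy classes of Heegaard surfaces of a given genus (for $\rank=3$ this is the theorem immediately preceding the one I am proving), so the $d_n$ take only finitely many values, a contradiction. So $\vol(M_n)\to\infty$. After a further subsequence, either $\rank(\pi_1(M_n))=2$ for all $n$, and \fullref{Agol-genus2} together with its proof identify every geometric limit of $(M_n,p_n)$ with base points in the thick part as a convex--cocompact handlebody of genus $2$; or $\rank(\pi_1(M_n))=3$, and \fullref{rank3-structure} gives an embedded copy of a fixed compact atoroidal irreducible $3$--manifold $N$ in each $M_n$ with handlebody complement, after which \fullref{super-ugly}, \fullref{ugly} and \fullref{tameness} show that every geometric limit is a tame hyperbolic manifold containing $N$ whose complementary ends are convex--cocompact handlebody ends. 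In either subcase the limit $M_\infty$ has a rigidly controlled topological structure coming from finitely many possibilities.

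I now combine this with Pitts--Rubinstein minimal surfaces. By \fullref{Pitts-Rubinstein} each $S_n$ is isotopic, up to attaching a vertical $1$--handle, to a minimal surface $F_n$ of genus at most $4$ in $M_n$, and the bounded diameter lemma for minimal surfaces yields $\diam(F_n)\le D(\epsilon)$. Choosing base points $p_n\in F_n$ and passing to a geometrically convergent subsequence $(M_n,p_n)\to (M_\infty,p_\infty)$, the $F_n$ converge, through the almost isometric embeddings of geometric convergence, to a minimal surface $F_\infty$ of genus at most $4$ in $M_\infty$ that separates a compact region of $M_\infty$ into two compression bodies $U_\infty,V_\infty$. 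On each of these two sides I can produce an essential meridian disk of bounded diameter: inside $N$ (or inside the compact thick core of the handlebody in the rank--$2$ case) by compactness, and inside a convex--cocompact handlebody end by the standard compact--core compressing--disk construction. Pulling this pair of disks back via the almost isometric embeddings yields, for all large $n$, essential disks $D_U^n\subset U_n$ and $D_V^n\subset V_n$ whose boundary curves on $S_n$ have uniformly bounded length; $\epsilon$--thickness of $M_n$ then bounds their geometric intersection number and hence their distance in $\CC(S_n)$, contradicting $d_n\to\infty$.

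The hard part will be this last geometric limit step: simultaneously exhibiting bounded--diameter meridian disks on \emph{both} sides of $F_\infty$ and certifying that they pull back to bounded--length disks on $S_n$. The $N$--side is handled by compactness, but the handlebody side genuinely requires \fullref{tameness} and the covering theorem through \fullref{ugly} to ensure that the geometric limit of the handlebody complements is convex cocompact, so that its disk set is realized in a compact region of $M_\infty$ captured by the almost isometric embeddings for all $n$ sufficiently large.
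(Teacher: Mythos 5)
The paper states this result without proof, so there is no argument of record to compare against; I evaluate your proposal on its own terms. The broad strategy (contradiction via geometric limits and Pitts--Rubinstein minimal surfaces, producing short compressing disks on both sides) is reasonable, but there are two genuine gaps. The first concerns the claimed structure of the geometric limit. You base at points $p_n\in F_n$, but the control supplied by \fullref{Agol-genus2} and \fullref{rank3-structure} governs the geometric limit only when the base points lie within bounded distance of the short carrier graph, respectively of the core $N$; the minimal surface $F_n$ need not be anywhere near either. If it is far away, the limit need not be a genus-$2$ handlebody (rank $2$) nor contain a copy of $N$ (rank $3$). Even for the Namazi--Souto examples, which have rank $2$ when $g=2$, geometric limits based away from the gluing locus are trivial interval bundles $\Sigma_2\times\BR$ rather than handlebodies (\fullref{geometric limit}), so your blanket claim that every rank-$2$ limit with thick base point is a convex-cocompact genus-$2$ handlebody is false. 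A correct argument must either first locate $F_n$ relative to the carrier graph or to $N$, or else deal with the interval-bundle limits as well.

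The second gap is that the step you yourself call ``the hard part'' is not merely technical: exhibiting compressing disks of uniformly bounded diameter on \emph{both} sides of $F_n$ is exactly what the distance hypothesis forbids. If the boundary curves of $D_U^n$ and $D_V^n$ both have bounded length on $F_n$, then $\epsilon$-thickness bounds their intersection number and hence the splitting distance, so when $d_n\to\infty$ at least one of the two families of boundary curves must become unboundedly long. Any proof therefore has to use the hypothesis $\rank\le 3$ \emph{first}, to manufacture the two bounded disks, and your sketch does not say why $F_\infty$ should compress to both sides within the compact region captured by the almost-isometric embeddings. A final remark: the rank-$2$ subcase can be dispatched much more cheaply without any of this machinery. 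By \fullref{Agol-genus2}, a thick rank-$2$ manifold of large volume has Heegaard genus $2$, and Scharlemann--Tomova's bound ($d(P)\le 2g(Q)$ unless $Q$ is a stabilization of $P$) then caps the distance of any genus-$4$ splitting by $4$. This suggests that the rank-$3$ case is also better attacked by combining \fullref{rank3-structure} and \fullref{rank3-genus} with curve-complex distance bounds (Hartshorn's theorem when $\partial N$ is incompressible, Scharlemann--Tomova for a splitting extended from one of $N$) rather than by a direct disk hunt in the geometric limit.
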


\section{Open questions}\label{sec:questions}

In this section we outline some open questions and problems.

\begin{prob}\label{prob:heeg-model}
Construct models for hyperbolic 3--manifolds in terms of combinatorial data
given by a Heegaard splitting.
\end{prob}

A satisfactory answer of \fullref{prob:heeg-model} would be given by a
machine which, when fed with the combinatorial data of a Heegaard splitting
of genus $g$ of a 3--manifold $M$, yields a metric $\rho$ on $M$ such that
whenever $M$ admits a hyperbolic metric $\rho_0$, then $(M,\rho)$ and
$(M,\rho_0)$ are $L_g$-bi-Lipschitz where $L_g$ depends only on $g$. If
such a machine exists, then the obtained metric $\rho$ captures all the
coarse information about $M$.

Partial results have been obtained towards an answer of
\fullref{prob:heeg-model}. Let $(M,\rho_0)$ be a hyperbolic manifold
and $M=U\cup V$ a Heegaard splitting of genus $g$. Then it is possible to
construct out of combinatorial data determined by the Heegaard splitting
$M=U\cup V$ a metric $\rho$ on $M$ such that $(M,\rho)$ and $(M,\rho_0)$
are $L_{g,\inj(M,\rho_0)}$-bi-Lipschitz. In other words, the constant in
question depends on the genus of the splitting and the injectivitiy radius
of the hyperbolic metric. On the other hand, it is also possible to give
lower bounds on the injectivity radius of the hyperbolic metric in terms of
combinatorial data of the Heegaard splitting of $M$. If $\gamma$ is a
simple closed curve on the Heegaard surface such that $(U,\gamma)$ and
$(V,\gamma)$ has incompressible and acylindrical pared boundary, then it is
also possible to determine if the homotopy class of $\gamma$ is non-trivial
and short in $(M,\rho_0)$. All these results, due to Jeff Brock, Yair
Minsky, Hossein Namazi and the author, are steps towards a positive answer
to \fullref{prob:heeg-model}.

Answering \fullref{prob:heeg-model} also opens the door to obtain
partial proofs of the geometrization conjecture. The idea being that if
under the assumption of the existence of a hyperbolic metric one is able to
construct metrics close to the hyperbolic metric, then one can perhaps be
smarter and use the machine developed to answer
\fullref{prob:heeg-model} to construct hyperbolic, or at least
negatively curved, metrics. In some sense, this is the idea behind the
results presented in \fullref{sec:hossein}; compare with \fullref{main}.
The construction used in the proof of \fullref{sec:hossein} is a special
case of a more general construction due to Hossein Namazi \cite{Namazi}.
Recently, Jeff Brock, Yair Minsky, Hossein Namazi and the author have
proved that for every $g$ there is a constant $D_g$ such that every closed
3--manifold which admits a genus $g$ Heegaard splitting with at least
distance $D_g$ in the curve complex admits also a negatively curved metric.

\begin{prob}\label{prob:weak-geom}
Show that for every $g$ there are at most finitely many counter-examples to
the geometrization conjecture which admits a genus $g$ Heegaard splitting.
\end{prob}

After the proof of the geometrization conjecture by Perelman,
\fullref{prob:weak-geom} may seem redundant. However, a satisfactory
answer to \fullref{prob:weak-geom} would consist of presenting a
combinatorial, Ricci-flow-free, construction of the hyperbolic metric in
question. In fact, after answering \fullref{prob:weak-geom} one could try
to prove that for example there are no counter-examples to the
geometrization conjecture which admit a genus $10$ Heegaard splitting.
Apart from the difficulty of checking if a manifold is hyperbolic or not
there is the conceptual problem that unless the constants involved in the
answers of \fullref{prob:heeg-model} and \fullref{prob:weak-geom}
are computable then there can be no a priori bounds on the number of
possible exceptions to the geometrization conjecture. So far, all similar
constants are obtained using geometric limits and compactness and hence
they are not computable.

\begin{prob}
Obtain explicit constants.
\end{prob}

We turn now to questions related to the rank of the fundamental group. If
$\Sigma_g$ is a closed surface of genus $g$ then it is well-known that
$\rank(\pi_1(\Sigma_g))=2g$. In \cite{Zieschang}, Zieschang proved that in
fact $\pi_1(\Sigma_g)$ has a single Nielsen equivalence class of minimal
generating sets. Zieschang's proof is quite combinatorial and difficult to
read.

\begin{prob}\label{prop:Zieschang}
Give a geometric proof of Zieschang's result.
\end{prob}

A geometric proof of Zieschang's result would also shed some light on the
fact that there are 2--dimensional hyperbolic orbifolds whose fundamental
groups admit two different Nielsen equivalence classes of minimal
generating sets. Compare with Weidmann \cite{Weidmann}.

In general, if $G$ is a finitely generated group then every generating set
$(g_1,\dots,g_r)$ with $r$ elements can be {\em stabilized\/} to a generating
set $(g_1,\dots,g_r,e_G)$ with $r+1$ elements by adding the identity to it.
It is easy to see that any two generating sets with $r$ elements are
Nielsen-equivalent after $r$ stabilizations. Clearly, if two generating
sets with the same cardinality are not Nielsen equivalent, then one needs
at least a stabilization so that they can be connected by a sequence of
Nielsen moves. The author suspects that the two standard generating sets of
the manifolds considered in \fullref{sec:hossein} are not Nielsen
equivalent, and in fact that one need $g$ stabilizations to make them
Nielsen equivalent; here $g$ is the genus of the involved handlebodies.

\begin{prob}\label{problem-rank-mountain}
If $N_{f^n}$ is one of the manifolds considered in \fullref{sec:hossein}
and $n$ is very large, determine how many stabilizations does one need so
that the standard Nielsen equivalence classes of generating sets coincide.
\end{prob}

\begin{bem}
As remarked by Richard Weidmann, \fullref{problem-rank-mountain} can be
easily solved if the genus is equal to 2. His argument is very special to
this situation.
\end{bem}

The relation between Heegaard splittings and geometry is much better
understood than the relation between rank of the fundamental group and
geometry. In some sense, one can take any result relating geometry of
hyperbolic manifolds and Heegaard splittings and try to prove the analogous
statement for the rank of the fundamental group. For example the following
question.

\begin{prob}\label{prob:rank-mapping}
Prove that for every $g$ there is some $D_g$ such that whenever
$\phi\in\Map(\Sigma_g)$ is a pseudo-Anosov mapping class with at least
translation length $D_g$ in the curve complex, then the fundamental group
of the mapping torus $M_\phi$ has rank $2g+1$ and  a single Nielsen
equivalence class of generating sets.
\end{prob}

More ambitiously, one could remark that by \hyperlink{MRTh}{Mostow's Rigidity Theorem} a
hyperbolic 3--manifold is determined by its fundamental group. In
particular, the following questions make sense.

\begin{prob}
Given a presentation of the fundamental group of a 3--manifold estimate the
volume.
\end{prob}

\begin{prob}
Construct models out of the algebraic data provided by a presentation.
\end{prob}

Most probably, satisfactory answers to these questions would imply answers
to \fullref{conjecturea} and \fullref{conjectureb} from \fullref{sec:rank2}.

Another question related to the relation between Heegaard genus and rank of
the fundamental group is the following. As mentioned above, Abert and
Nikolov \cite{Abert-Nikolov} have announced that there are hyperbolic
3--manifolds with larger Heegaard genus than rank. However their proof is
not constructive. Essentially they show that there is no equality for all
but finitely many elements in a certain sequence of hyperbolic 3--manifolds.
It would be interesting to have concrete examples.

\begin{prob}
Construct concrete examples of hyperbolic 3--manifolds with larger rank than
Heegaard genus.
\end{prob}

A possible other interesting line of research would be to use minimal
surfaces to obtain geometric proofs about Heegaard splittings of
non-hyperbolic 3--manifolds. For example, using that the 3--sphere admits a
collapse, it is possible to obtain a proof of Waldhausen's classification
of the Heegaard splittings of the sphere. This strategy can be probably
applied to other Seifert manifolds.

\begin{prob}
Give a geometric proof of the classification of Heegaard splittings of
Seifert manifolds.
\end{prob}

\bibliographystyle{gtart}
\bibliography{link}

\end{document}